\newcommand{\cal}[1]{\mathcal{#1}}
\theoremstyle{plain}
\newtheorem*{theo}{Theorem}
\newtheorem*{cor}{Corollary}
\newtheorem{lemma}{Lemma}[section]
\newtheorem{theorem}[lemma]{Theorem}
\newtheorem{proposition}[lemma]{Proposition}
\newtheorem{corollary}[lemma]{Corollary}
\theoremstyle{definition}
\newtheorem{definition}[lemma]{Definition}
\newtheorem{example}[lemma]{Example}
\newtheorem{remark}[lemma]{Remark}
\let\phialt=\phi
\let\phi=\varphi
\let\varphi=\phialt
\begin{document}
\title{$L^p$-cohomology for groups of isometries
of Hadamard spaces}
\author{Ursula Hamenst\"adt}
\thanks{AMS subject classification: 20J06, 20F67, 53C20, 22F10}
\date{October 24, 2024}

\begin{abstract}
%Let $\Gamma$ be a countable group which admits a non-parabolic
%rough  isometric action on a proper hyperbolic space with cocompact isometry group. 
 % Gromov hyperbolic geodesic metric space whose boundary has finite
%  Assouad dimension and fulfills a strengthening of the 
%  uniformly properness condition. 
 We show that a discrete group $\Gamma$ which admits a non-elementary isometric
 action on a Hadamard manifold of bounded negative curvature
 admits an isometric action on an $L^p$-space $V$ for some $p>1$ with 
 $H^1(\Gamma,V)\not=0$.
 \end{abstract}

\maketitle

\section{Introduction}

A countable group $\Gamma$ has property (T) if every affine isometric action 
of $\Gamma$ on
an $L^2$-space has a fixed point. Among the most prominent examples
of such groups are lattices in higher rank simple Lie groups.

For such higher rank lattices, much more is true. Namely,
any affine uniformly Lipschitz action on a Hilbert space has a fixed
point \cite{O22, dLdlS23}. Moreover, any isometric action on a uniformly
convex Banach space has a fixed point \cite{BFGM07}, \cite{dLdlS23}.

On the other hand, it is known that any hyperbolic group $\Gamma$ admits
a proper isometric action on some $L^p$-space \cite{Yu05, Ni13}, in spite of
the fact that many such groups have property (T). In particular, 
cocompact lattices in the rank one simple Lie groups
$Sp(2m,1),F_4^ {-20}$ 
admit proper affine isometric action on an $L^p$-space where $p>2$ 
can explicitly be estimated.

Property (T) and its strengthenings can be viewed as a vanishing result
for degree one group cohomology with coefficients in a representation of
$\Gamma$. The goal of this article is to point out that
the construction of representations in an $L^p$-space
with nontrivial first cohomology can be carried out for arbitrary
countable groups which admit
non-elementary isometric actions on Hadamard manifolds 
of bounded negative curvature. Here 
an isometric action 
of a group on a space $Y$ which is hyperbolic in the sense of 
Gromov is \emph{elementary} if either it has a bounded orbit
or if its action on the Gromov boundary of $Y$ has a global fixed point.
% and 
%we say that a Hadamard manifold has first order bounded 
%curvature tensor $R$ if 
%the norm of the covariant
%derivative $\nabla R$ of $R$ is uniformly bounded.

\begin{theo}\label{main}
Let $\Gamma$ be a discrete group which admits a non-elementary 
isometric action on a Hadamard manifold $M$ of bounded negative curvature.
%and first order bounded curvature tensor. 
Then there exists
 a number $p>1$ and a representation of $\Gamma$ on an $L^p$-space
$V$ with $H^1(\Gamma,V)\not=0$.
\end{theo}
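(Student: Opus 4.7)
The plan is to adapt the classical construction of $L^p$-cocycles for hyperbolic groups due to Yu, Mineyev, and Nica to this more flexible geometric setup. Since $M$ is a Hadamard manifold of bounded negative curvature, it is Gromov $\delta$-hyperbolic with $\delta$ depending only on the curvature bounds; the $\Gamma$-action therefore extends continuously to the visual boundary $\partial M$, and non-elementarity supplies two independent hyperbolic isometries $g,h\in\Gamma$ with disjoint attracting/repelling fixed-point pairs on $\partial M$, generating a free ping-pong subgroup.

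Fix a basepoint $o\in M$ and consider the orbit $X:=\Gamma o$ with the induced metric from $M$; this is a $\Gamma$-invariant discrete $\delta$-hyperbolic space. The key step will be to equip $X$ with a $\Gamma$-equivariant combinatorial bicombing $\sigma_{x,y}$, viewed as a finitely supported signed sum of edges of a Rips-type graph $\mathcal{R}$ over $X$, whose triangle defect $\sigma_{x,y}+\sigma_{y,z}-\sigma_{x,z}$ is supported near the coarse centre of the triangle $(x,y,z)$. Such a bicombing can be built by mimicking Mineyev's flow-space construction directly in the hyperbolic orbit $X$, and it requires only the hyperbolicity of $X$ and not cocompactness.

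For parameters $p>1$ and $\epsilon>0$, the target representation will be the weighted permutation representation of $\Gamma$ on
\[
V:=\ell^{p}\bigl(E(\mathcal{R}),w\bigr),\qquad w(e)=e^{-\epsilon p\,\dist(o,e)},
\]
suitably renormalised so that the action becomes $L^p$-isometric. The $1$-cocycle is then $b(\gamma):=\sigma_{o,\gamma o}$, with the cocycle identity $b(\gamma_1\gamma_2)=b(\gamma_1)+\gamma_1\cdot b(\gamma_2)$ coming from the triangle identity of the bicombing together with a standard symmetrisation absorbing the bounded defect into an exact coboundary.

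Two analytic steps then remain. To show $b(\gamma)\in V$, the $\delta$-thin triangle property concentrates the support of $\sigma_{o,\gamma o}$ in a bounded neighbourhood of a geodesic from $o$ to $\gamma o$, so the weight $w$ makes the $\ell^p$-mass summable provided $\epsilon$ is chosen relative to the growth rate of the orbit. To show $[b]\neq 0$, I would assume $b=df$ for some $f\in V$; then $f$ would have to encode a half-space at infinity, but the ping-pong dynamics of the free subgroup $\langle g,h\rangle$ on $\partial M$ would force $f$ to oscillate across this half-space in a manner incompatible with $\ell^p$-summability. The \emph{main obstacle} will be the simultaneous calibration of $\epsilon$ and $p$: the weight $w$ must decay fast enough to keep every $b(\gamma)$ in $V$, yet slowly enough to preclude any primitive $f\in V$. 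Without cocompactness, Mineyev's bounded-cohomology argument has no direct analogue, so this calibration must be carried out from the bounded negative curvature of $M$ together with the boundary dynamics provided by the non-elementarity hypothesis.
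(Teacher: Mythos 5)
There is a genuine gap, and it sits at the foundation of your construction. The theorem does not assume that the action of $\Gamma$ on $M$ is proper --- the paper explicitly contrasts its statement with the properly discontinuous case already treated in \cite{BMV05}. Consequently the orbit $X=\Gamma o$ need not be discrete or uniformly locally finite in $M$: take for instance a countable non-elementary subgroup of ${\rm PSL}(2,\mathbb{R})$ that is dense in the Lie topology, acting on $\mathbb{H}^2$. This destroys both analytic steps of your plan. Mineyev's flow-space bicombing (and any Rips-graph version of it) requires bounded geometry of the underlying graph, which fails here; and the $\ell^p$-membership of $b(\gamma)=\sigma_{o,\gamma o}$ requires an upper bound on the number of orbit points, hence edges, in metric balls --- ``the growth rate of the orbit'' that you propose to calibrate $\epsilon$ against is simply not finite for a non-proper action, so a bounded neighbourhood of a single geodesic can already carry infinite $\ell^p(w)$-mass.

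Even granting properness, your argument invokes only the Gromov hyperbolicity of $M$ and ping-pong on $\partial M$, and the paper points out that no argument at that level of generality can work: by \cite{MO19} there exist finitely generated groups with acylindrical, hence non-elementary, actions on hyperbolic geodesic spaces such that every affine isometric action on every $L^p$-space has a fixed point. The manifold structure must therefore enter essentially, and in the paper it does: one builds an Ahlfors regular metric on $\partial M$ from the mean curvature of horospheres (after a Ricci-flow smoothing to bound $\nabla R$), represents $\Gamma$ on $L^p$ of the space of geodesics $\partial M\times\partial M-\Delta$ with the ${\rm Iso}(M)$-invariant Liouville measure, uses the cocycle $c_\phi(\xi,\eta)=\log\vert\phi'\vert(\xi)-\log\vert\phi'\vert(\eta)$ of conformal derivatives, and proves nontriviality quantitatively (Proposition \ref{lowerestimate}) by showing $\Vert c_{\phi^k}\Vert_{L^p}\to\infty$ from north-south dynamics combined with Ahlfors regularity, which gives definite $\nu$-measure to the relevant annuli. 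Your closing step --- that a primitive $f$ ``would have to encode a half-space at infinity'' and ``oscillate'' incompatibly with summability --- is exactly the place where such a quantitative measure-theoretic input is needed and is not supplied by hyperbolicity and ping-pong alone.
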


This result is likely to be far from optimal since the requirement that
$\Gamma$ acts on a smooth manifold of bounded negative curvature 
rather than on an arbitrary Gromov hyperbolic geodesic metric space $Y$ 
%$Y$ admits some cocompact isometry group 
is very strong. However, 
some assumptions on $Y$ are necessary for the statement of the 
theorem to hold true. Namely, there are finitely generated groups $\Gamma$ which admit
acylindrical and hence non-elementary 
actions on some hyperbolic geodesic metric space, but such that  
for any $p>1$, any affine isometric action of $\Gamma$ 
on an $L^p$-space has a fixed point 
\cite{MO19}. We refer to the very recent
article \cite{DMcK23} for a comprehensive
discussion of related results. For finitely generated groups acting 
properly discontinuously, a version of the Theorem  is contained in 
\cite{BMV05}.

\begin{cor}\label{rankone}
  Let $\Gamma$ be a countable subgroup of a simple rank one Lie group $G$.
  If $\Gamma$ is not contained in a compact or parabolic subgroup of $G$ then
  there exists a representation of $\Gamma$ on an $L^p$-space
 $V$ with $H^1(\Gamma,V)\not=0$. 
\end{cor}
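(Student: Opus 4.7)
The plan is to derive the corollary directly from Theorem \ref{main} applied to the natural action of $\Gamma$ on the symmetric space $M=G/K$, where $K<G$ is a maximal compact subgroup. Rank one simple Lie groups are, up to local isomorphism, $SO(n,1), SU(n,1), Sp(n,1)$ and $F_4^{-20}$, whose associated symmetric spaces are the real, complex, quaternionic hyperbolic spaces and the Cayley hyperbolic plane respectively. In every case, $M$ is a Hadamard manifold with sectional curvatures pinched in some interval $[-b^2,-a^2]$, $0<a\leq b$, so that the hypothesis on the target manifold in Theorem \ref{main} is automatic. I would also recall that $M$ is Gromov hyperbolic and that its Gromov boundary coincides canonically with the visual boundary $\partial M\cong G/P$, where $P<G$ is a minimal parabolic subgroup; moreover the stabilizer in $G$ of a point $\xi\in\partial M$ equals a conjugate of $P$, and the stabilizer of a point in $M$ equals a conjugate of $K$.

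Given this setup, all that remains is to verify that the induced isometric action of $\Gamma$ on $M$ is non-elementary in the sense defined in the paper. Suppose first that $\Gamma$ has a bounded orbit in $M$. Then the orbit has a well-defined circumcenter (or one may use the Cartan fixed point theorem on the closed convex hull), which is a point of $M$ fixed by $\Gamma$; its stabilizer in $G$ is a conjugate of $K$, so $\Gamma$ would lie in a compact subgroup of $G$, contradicting the hypothesis. Suppose instead that $\Gamma$ fixes a point $\xi$ on the Gromov boundary of $M$. Then $\Gamma$ is contained in the stabilizer of $\xi$ in $G$, which is a conjugate of the parabolic $P$, again contradicting the hypothesis. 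Hence the action is non-elementary, and Theorem \ref{main} produces a representation of $\Gamma$ on some $L^p$-space $V$ with $H^1(\Gamma,V)\neq 0$.

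There is essentially no serious obstacle here: the argument is a translation between the Lie-theoretic language of compact and parabolic subgroups of $G$ and the geometric language of bounded orbits and boundary fixed points for the $\Gamma$-action on $M$. The only point requiring a modicum of care is verifying that the notion of ``parabolic'' used in the statement (subgroup of $G$ stabilizing a point at infinity) coincides with the notion implicitly used via Theorem \ref{main} (stabilizer of a point in the Gromov boundary of $M$); this is immediate from the identification $\partial M\cong G/P$ for rank one symmetric spaces.
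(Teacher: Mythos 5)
Your argument is correct and is exactly the intended deduction: the paper gives no separate proof of this corollary, treating it as an immediate consequence of the main theorem via the dictionary between bounded orbits / boundary fixed points for the $\Gamma$-action on the pinched negatively curved symmetric space $G/K$ and containment in compact / parabolic subgroups of $G$. Your write-up supplies those routine verifications (Cartan fixed point theorem, identification of boundary-point stabilizers with parabolics in rank one) correctly.
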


If $G$ does not have property (T), that is, if $G=SO(n,1)$ or $G=SU(n,1)$, 
then this result is well known, and we can in fact choose $p=2$
(Theorem 2.7.2 of \cite{BHV08}). 

%For any lattice $\Gamma$ in a product $G=G_1\times \dots \times G_k$ of simple Lie groups 
%$(k\geq 2)$, the projection of $\Gamma$ to each of the factors either is a lattice or dense.
%Thus if 
%for some $i\leq k$, the real rank of the group $G_i$ is one, 
%then the projection of $\Gamma$ into $G_i$ is non-elementary. Hence
%as an immediate consequence of Corollary \ref{rankone} we obtain
%
%\begin{corollary}\label{rankone2}
%Let $\Gamma$ be a lattice in a product $G_1\times \dots \times G_k$ of simple Lie groups.
%If at least one of the factors $G_i$ has rank 1, then there exists a
%representation of $\Gamma$ on an $L^p$-space
% $V$ with $H^1(\Gamma,V)\not=0$. 
%\end{corollary}

The proof of Theorem \ref{main} uses an idea due to Nica
\cite{Ni13}. Namely, let $M$ be a 
Hadamard manifold of bounded negative curvature,
with ideal boundary $\partial M$. 
The geodesic flow $\Phi^t$ acts on the unit tangent bundle 
$T^1M$ of $M$ preserving the 
\emph{Lebesgue Liouville measure} 
$\lambda$. This measure disintegrates to a 
Radon measure $\hat \lambda$ on the space of geodesics 
$\partial M\times \partial M-\Delta$ which is invariant under the 
action of the isometry group ${\rm Iso}(M)$ of $M$.
In particular, for any $p\geq 1$, ${\rm Iso}(M)$ 
acts isometrically on 
$L^p(\partial M\times \partial M-\Delta,\hat \lambda)$.
For sufficiently large $p$ we construct a cocycle for this action 
and show that its restriction to the subgroup $\Gamma$ 
 is unbounded provided that the action of
$\Gamma$ is non-elementary. 
%The assumption on the covariant derivative of
%the curvature of $M$ is used to control the measure $\hat \lambda$.

The organization of this article is as follows. In Section \ref{coho} we study
actions of a 
group $\Gamma$ on compact metric measure spaces and formulate a condition 
for such an action which 
is sufficient for the construction of a cocycle with values in an $L^p$-space.
In Section \ref{products} we impose some further constraints which guarantee
that the cocycle yields a nontrivial cohomology class.
In Section \ref{anahlfors} we construct an Ahlfors regular
distance function on the ideal boundary $\partial M$ of
a Hadamard manifold $M$ of bounded negative curvature with 
the additional assumption that the covariant derivative of the curvature
tensor is uniformly bounded in norm. 
The distance function $d$ will in general not be
a Gromov metric on $\partial M$, but it is contained in its coarse conformal
gauge. In Section \ref{groupsof} we verify that the conditions
formulated in Section \ref{coho} and Section \ref{products} are
fulfilled for the action of the isometry group of $M$ on
$(\partial M, d)$. This yields the proof of Theorem \ref{main} under the 
additional assumption of bounded covariant derivative of the curvature 
tensor. Smoothing a given metric with the Ricci flow \cite{K05} removes this
additional assumption. 

The appendix contains some regularity result
for the shape operator of horospheres of a 
Hadamard manifold of bounded negative curvature
and bounded covariant derivative of the curvature tensor which 
is used in an essential way in the construction of the Ahlfors
regular metric $d$ on $\partial M$ and which 
we were unable to locate in the
literature.

{\bf Acknowledgement:} I thank Cornelia Drutu and John McKay for useful
  discussions.

\section{Actions on compact metric measure spaces}
\label{coho}

In this section $X$ denotes a compact Hausdorff space. 

\begin{definition}\label{width}
A \emph{width} on $X$ is a continuous symmetric function
$\iota:X\times X\to [0,\infty)$ with $\iota(x,x)=0$ for all $x\in X$.
We call a space $X$ equipped with a width a \emph{width space}.
\end{definition}

\begin{example}
If $X$ is metrizable then 
a metric $d$ on $X$ defining the given topology is an example of 
a width, and the same holds true for the trivial function $X\times X\to \{0\}$. 
If $\iota$ is a width and if $\alpha>0$ is arbitrary, then 
$\iota^\alpha$ is a width. 
\end{example}

A width is not required to satisfy the triangle inequality. Note however
that by continuity of $\iota$ and compactness of $X$ and hence of 
$X\times X$, a width is a 
bounded function.

\begin{definition}\label{width2}
Let $(X,\iota),(Y,d)$ be two width spaces. A map 
$F:X\to Y$ is \emph{Lipschitz continuous} if there exists a number 
$L>0$ such that $d(Fx,Fy)\leq L\iota (x,y)$ for all $x,y$. 
\end{definition}

If $F:X\to Y$ is Lipschitz, and if $x,y\in X$ are such that
$\iota(x,y)=0$, then $d(Fx,Fy)=0$. 

Most width spaces admit very few Lipschitz functions. 

\begin{example} Let $I\subset \mathbb{R}$ be the unit interval, equipped with the
  standard metric $d$. Then the space $(I,d^{2})$
 is a 
width space for which 
any Lipschitz function 
$f:(I, d^{2})\to \mathbb{R}$ is constant.  This  can be seen by noting that 
such a Lipschitz function is differentiable everywhere,
with vanishing differential. 
In other words, raising a metric to a power bigger than one
destroys Lipschitz functions but also the triangle inequality. Note however
that for $\alpha\in (0,1)$ the snowflake $(I,d^\alpha)$ is a metric space.
\end{example} 

Let $\Gamma$ be a countable group which acts
as a group of 
%$c$-coarsely conformal 
homeomorphisms on  the width space 
$(X,\iota)$. We assume that the diagonal action of $\Gamma$ on 
$X\times X$ preserves the fat diagonal
$\Delta=\{(x,y)\in X\mid \iota(x,y)=0\}$ in $X\times X$. Note that $\Delta$ is 
a closed subset of $X\times X$ by continuity of $\iota$.

Assume that 
$\Gamma$ preserves the measure class of a Borel probability measure $\mu$
on $X$ 
of full support without atoms. Then for all $\phi\in \Gamma$ and for
$\mu$-almost every $x\in X$, the Jacobian of $\phi$ is defined at $x$.
Here by Jacobian we denote the Radon Nikodym derivative
of $\phi_*\mu$ with respect to $\mu$. In the sequel we also speak
about the Radon Nikodym derivative of $\phi$, and we denote it by
${\rm RN}(\phi)$.

Following \cite{Ni13}, for a number $Q>0$ define 
\[\nu=\iota^{-2Q}\mu\times \mu,\]
which is thought of as a measure on $X\times X-\Delta$.
The following is fairly immediate from the above discussion.

\begin{lemma}\label{almostinvariant}
The measure $\nu$ on $X\times X-\Delta$ 
is locally finite and 
quasi-invariant under the diagonal action of
$\Gamma$. The Radon Nikodym derivative ${\rm RN}_\nu(\phi)$ 
of $\phi$ 
with respect to $\nu$ 
equals
\begin{equation}\label{rn}
{\rm RN}_\nu(\phi)(x,y)=
\iota(\phi(x),\phi(y))^{-2Q} {\rm RN}(\phi)(x){\rm RN}(\phi)(y)\iota(x,y)^{2Q}.
 \end{equation}
%is contained in $[b^{-4}L(\phi)^{-4Q},b^4L(\phi)^{4Q}]$.
%and is bounded from below by a universal positive constant.
\end{lemma}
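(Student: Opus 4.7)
The lemma makes three assertions — local finiteness of $\nu$ on $X\times X-\Delta$, quasi-invariance under the diagonal $\Gamma$-action, and the explicit formula (\ref{rn}) for the Radon--Nikodym cocycle — and I would treat each by a short measure-theoretic argument.

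For local finiteness, the set $\Delta$ is closed in the compact space $X\times X$, so any compact $K\subset X\times X-\Delta$ is bounded away from $\Delta$. By continuity of $\iota$ and compactness of $K$, $\iota\geq c$ on $K$ for some $c>0$; therefore $\iota^{-2Q}\leq c^{-2Q}$ on $K$ and $\nu(K)\leq c^{-2Q}(\mu\times\mu)(K)<\infty$. For quasi-invariance, the product $\mu\times\mu$ is itself quasi-invariant under the diagonal $\Gamma$-action on $X\times X$, with Radon--Nikodym derivative $(x,y)\mapsto{\rm RN}(\phi)(x){\rm RN}(\phi)(y)$ (Fubini together with the product structure). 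Since $\iota^{-2Q}$ is strictly positive on $X\times X-\Delta$, the measures $\nu$ and $(\mu\times\mu)|_{X\times X-\Delta}$ have the same null sets, and the hypothesis that the diagonal action preserves $\Delta$ then upgrades quasi-invariance of $\mu\times\mu$ to quasi-invariance of $\nu$.

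Formula (\ref{rn}) is the chain rule applied to $\nu=\iota^{-2Q}\cdot(\mu\times\mu)$. Splitting $d(\phi_*\nu)/d\nu$ through the intermediate measures $\mu\times\mu$ and $\phi_*(\mu\times\mu)$, the density $\iota^{-2Q}$ is stripped on one side and reintroduced on the other at the push-forward variables, producing the combination $\iota(\phi(x),\phi(y))^{-2Q}\cdot\iota(x,y)^{2Q}$, while the comparison of $\phi_*(\mu\times\mu)$ with $\mu\times\mu$ contributes the factor ${\rm RN}(\phi)(x){\rm RN}(\phi)(y)$. Multiplying the three factors yields (\ref{rn}).

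I do not foresee any real obstacle — the entire proof is a routine change-of-variables calculation. The only care needed is bookkeeping so that the direction of push-forward is aligned with the argument of $\iota$ appearing in (\ref{rn}), i.e.\ that we correctly end up with $\iota(\phi(x),\phi(y))^{-2Q}$ rather than its variant with $\phi^{-1}$.
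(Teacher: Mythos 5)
Your proposal is correct and follows essentially the same route as the paper, which simply asserts that quasi-invariance of $\nu$ follows from quasi-invariance of $\mu$ together with preservation of the fat diagonal, that formula (\ref{rn}) is a straightforward change-of-variables computation, and that local finiteness follows from continuity of $\iota$ and finiteness of $\mu$. Your version merely fills in the routine details (compactness away from $\Delta$, the chain rule through $\mu\times\mu$), and your closing remark about aligning the direction of the push-forward with the argument of $\iota$ is exactly the one point of bookkeeping that deserves attention.
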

\begin{proof} Since $ \mu$ is quasi-invariant under the action of 
$\Gamma$, and $\Gamma$ preserves the fat diagonal, 
the same holds true for $\nu$. 
Let $\phi\in \Gamma$ and let $(x,y)\in X\times X-\Delta$. It is straightforward that
the formula (\ref{rn}) computes the 
Radon Nikodym derivative ${\rm RN}_\nu(\phi)(x,y)$ 
of $\phi$ at $(x,y)$ with respect to the measure 
$\nu$. 
%Lemma \ref{lipschitz10} then implies that this Radon Nikodym derivative  is 
%bounded from above and below by the quantity stated in the lemma.

That the measure $\nu$ on $X\times X-\Delta$ 
is locally finite is immediate from continuity
of the width $\iota$ and finiteness of $\mu$.
\end{proof}

%Recall that by $c$-coarse conformality, the Radon Nikodym derivative 
%roughly equals the $Q$-th power of the local dilatation. 
 
 We now formulate a condition for a measure class preserving 
 action of a group $\Gamma$ on $(X,\iota,\mu)$ which 
ensures that there exists a cocycle for the action
with values in $L^p(\nu)$.

{\bf Condition $(*)$:} The group $\Gamma$ acts by bi-Lipschitz transformations
on $(X,\iota)$, and 
for every $\phi\in \Gamma$, the 
Radon Nikodym derivative ${\rm RN}(\phi)$ of $\phi$ is a Lipschitz
continuous function $(X,\iota)\to (0,\infty)$ which is bounded away from zero. 

Note that under Condition $(*)$, by compactness of $X$ and continuity,
for each $\phi\in \Gamma$ the Radon Nikodym derivative 
${\rm RN}(\phi)$ of $\phi_*\mu$ with respect to $\mu$ 
is a bounded function on $X$. 
Moreover, since for each $\phi$ there exists a number $L>0$ (the 
Lipschitz constant of $\phi^{-1}$) such that
$\iota(x,y)\leq L\iota (\phi(x),\phi(y))$ for all $(x,y)\in X\times X-\Delta$, 
Lemma \ref{almostinvariant} shows 
that the function 
${\rm RN}_\nu(\phi)$ is bounded as well. 

%If $X$ is compact metrizable and $\Gamma$ is finitely generated, 
%then Condition 1 can alway be fulfilled. The following is an extension of the 
%main result of \cite{H23}.
%
%\begin{proposition}
%Let $\Gamma$ be finitely generated and let $X$ be metrizable. Then there 
%exists a metric $d$ on $X$ and a function $u$ with $\int u d\mu=1$ 
%with the following properties.
%\begin{enumerate}
%\item The action of $\Gamma$ on $(X,d)$ is by bi-Lipschitz transformations.
%\item For each $\phi\in \Gamma$, the function ${\rm RN}(\phi)$, measured for 
%$u\mu$, is Lipschitz continuous with respect to $d$.
%\end{enumerate} 
%%end{proposition}Ê

%From now on we always assume that the bounded metric 
%space $X$ is 
%equipped with 
%the distance $\delta_e$ defined in equation 
%(\ref{delta}). 
%In particular, the action of $\Gamma$ is by bi-Lipschitz transformations,
%with controlled bi-Lipschitz constants. 

As a consequence, for each $p>1$ we obtain
a representation of
$\Gamma$ on the space 
\[L^p(\nu)=\{f: X\times X-\Delta \to \mathbb{R}, 
\int \vert f\vert^p d\nu<\infty\}\] 
by 
\begin{equation}\label{lpaction}
  (\phi f)(x,y)=f(\phi^{-1}(x),\phi^{-1}(y)).\end{equation}
Namely, since for each $\phi\in \Gamma$ the function
${\rm RN}_\nu(\phi)$ is pointwise uniformly bounded,  
via the 
formula (\ref{lpaction}),  each $\phi\in \Gamma$ 
acts as a bounded linear operator on $L^p(\nu)$
for every $p\geq 1$.
The action is isometric if and only if the action of 
$\Gamma$ on $(X\times X-\Delta,\nu)$ is measure preserving.

Following Bourdon and Pajot \cite{BP03}, 
for $p\geq 2Q$ define the \emph{Besov space} ${\mathfrak B}_p(X)$ to consist of 
all measurable functions $f:X\to \mathbb{R}$ for which the 
\emph{Besov semi-norm} 
\[\Vert f\Vert_{{\mathfrak B}_p}=
\bigl(\int \int \vert f(x)-f(y)\vert^p \iota^{-2Q}(x,y)d\mu(x)d\mu(y)\bigr)^{1/p}\]
is finite.

\begin{lemma}\label{embed}
For each $p\geq 2Q$, 
the space of Lipschitz function $(X,\iota)\to \mathbb{R}$ embeds into
$\mathfrak{B}_p$.
\end{lemma}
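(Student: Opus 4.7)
The plan is to directly bound the Besov seminorm of a Lipschitz function using the Lipschitz estimate together with the boundedness of $\iota$, which follows from compactness of $X$ and continuity of $\iota$.

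First I would fix a Lipschitz function $f:(X,\iota) \to \mathbb{R}$ with Lipschitz constant $L$, so that $|f(x) - f(y)| \leq L\iota(x,y)$ for all $x,y \in X$. Raising this inequality to the $p$-th power yields $|f(x)-f(y)|^p \leq L^p \iota(x,y)^p$, and hence the integrand in the Besov seminorm satisfies
\[
|f(x)-f(y)|^p \iota(x,y)^{-2Q} \leq L^p \iota(x,y)^{p-2Q}.
\]
Since $p \geq 2Q$, the exponent $p-2Q$ is non-negative, so the right-hand side is non-singular on the diagonal.

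Next I would use that $X\times X$ is compact and $\iota$ is continuous, so $\iota$ attains a finite maximum $D = \sup_{x,y}\iota(x,y) < \infty$. Therefore $\iota(x,y)^{p-2Q} \leq \max(1, D^{p-2Q})$ everywhere. Combining with the fact that $\mu$ is a probability measure, this gives
\[
\Vert f \Vert_{\mathfrak{B}_p}^p \leq L^p \max(1, D^{p-2Q}) \mu(X)^2 = L^p \max(1, D^{p-2Q}) < \infty,
\]
so $f \in \mathfrak{B}_p$. This shows that every Lipschitz function has finite Besov seminorm, giving the asserted embedding.

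There is no real obstacle here; the statement is essentially a bookkeeping exercise that verifies the consistency of the definition of $\mathfrak{B}_p$ with the Lipschitz class once one has $p \geq 2Q$. The only points worth flagging are that the bound $\iota \leq D$ relies crucially on compactness of $X$, and that the threshold $p = 2Q$ is precisely the exponent that makes the singular weight $\iota^{-2Q}$ absorbable by the Lipschitz-induced factor $\iota^p$.
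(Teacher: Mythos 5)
Your argument is correct and is essentially the paper's: both rest on the pointwise bound $|f(x)-f(y)|^p\iota^{-2Q}\leq L^p\iota^{p-2Q}$, the boundedness of $\iota$ from compactness, and the finiteness of $\mu\times\mu$. The paper merely adds the formal step of exhausting $X\times X-\Delta$ by the compact sets $D(\epsilon)$ and invoking dominated convergence, which handles the fat diagonal explicitly; your direct bound is legitimate there as well since a Lipschitz function satisfies $f(x)=f(y)$ whenever $\iota(x,y)=0$.
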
  
\begin{proof}
  Let $f:(X,\iota)\to \mathbb{R}$ be Lipschitz continuous. Then
there exists a number $L>0$ with 
 $\vert f(x)-f(y)\vert\leq L\iota(x,y)$ for all $x,y$.

 By continuity of $\iota$, for all $\epsilon >0$ 
the set 
\[D(\epsilon)=\{(x,y)\in X\times X\mid
\iota(x,y)\geq \epsilon\}\] 
is a compact subset of $X\times X-\Delta$, and 
$D(\epsilon)\subset D(\delta)$ for $\epsilon >\delta$, 
$\cup_{\epsilon >0}D(\epsilon)=X\times X-\Delta$. 
For all $\epsilon >0$ we have
\[\int_{D(\epsilon)}\vert f(x)-f(y)\vert^p \iota^{-2Q}(x,y)d\mu(x)d\mu(y)
\leq L^p \int_{D(\epsilon)} \iota(x,y)^{p-2Q}d\mu(x)d\mu(y)\leq C\]
for a universal constant $C>0$ since $p\geq 2Q$ by assumption, since 
$\iota$ is a bounded function and $\mu\times \mu$ is a probability measure.

The statement now follows from Lebesgue's dominated convergence theorem, 
applied to the functions
\[F_\epsilon(x,y)=\begin{cases} \vert f(x)-f(y)\vert^p \text{ if } (x,y)\in D(\epsilon) \\
0 \text{ otherwise }
\end{cases}\]
and the measure $\nu$.
\end{proof}

Via the map which associates to $f\in \mathfrak{B}_p$ the
function $\Psi(f)(x,y)=f(x)-f(y)$,  
the Besov space ${\mathfrak B}_p$ embeds into 
$L^p(X\times X-\Delta,\nu)$.

For $\phi\in \Gamma$ and $(x,y)\in X\times X$ define
%\[f_\phi(x)=\log {\rm RN}(\phi)(x)\] where ${\rm RN}(\phi)$,
%and for $x\not=y$ put 
\begin{equation}\label{cocycle}
  c_\phi(x,y)=\log {\rm RN}(\phi)(x)-\log {\rm RN}(\phi)(y).
\end{equation}
%d(\phi(x),\phi(y))-\log d(x,y).\] 
Then $c_\phi$ is a measurable \emph{cocycle} for the 
diagonal action of $\Gamma$ on 
$X\times X$.
This means that 
\[c_{\phi \circ \psi}(x,y)=c_{\phi}(\psi(x,y))+c_{\psi}(x,y)\]
for all $(x,y)\in X\times X$ and all $\phi,\psi\in \Gamma$.

The following lemma explains the significance of Condition $(*)$.

 \begin{lemma}\label{integrable}
Assume condition $(*)$.
%$\mu$ is Ahlfors regular, of dimension $Q$, and that
%for each $\phi$ the function ${\rm RN}(\phi):(X,\iota)\to \mathbb{R}$ is Lipschitz continuous
%and bounded away from zero.
Then 
for $p\geq 2Q$ and each $\phi\in \Gamma$, we have
 $\log {\rm RN}(\phi)\in {\mathfrak B}_p$. In particular,
   the cocycle $c_\phi$ on $X\times X-\Delta$ consists of $L^p$-integrable
functions  with respect to $\nu$.
 \end{lemma}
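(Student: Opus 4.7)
The plan is to deduce this directly from the preceding Lemma \ref{embed}, together with the observation that the composition of a Lipschitz function with $\log$ (restricted to a compact subinterval of $(0,\infty)$) is again Lipschitz.

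First, I would verify that $\log {\rm RN}(\phi)$ is Lipschitz continuous as a function $(X,\iota) \to \mathbb{R}$. By Condition $(*)$, ${\rm RN}(\phi)$ is itself Lipschitz continuous on $(X,\iota)$ and is bounded away from zero by some constant $c > 0$. Moreover, as noted in the paragraph following Condition $(*)$, compactness of $X$ together with continuity forces ${\rm RN}(\phi)$ to be bounded from above by some constant $C < \infty$. Hence ${\rm RN}(\phi)$ takes values in the compact interval $[c,C] \subset (0,\infty)$, on which $\log$ is Lipschitz with constant $1/c$. Composition yields that $\log {\rm RN}(\phi)$ is Lipschitz on $(X,\iota)$.

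Next I apply Lemma \ref{embed}: since $p \geq 2Q$, every Lipschitz function on $(X,\iota)$ belongs to $\mathfrak{B}_p$, so $\log {\rm RN}(\phi) \in \mathfrak{B}_p$. Finally, the definition (\ref{cocycle}) gives
\[c_\phi(x,y) = \log {\rm RN}(\phi)(x) - \log {\rm RN}(\phi)(y) = \Psi\bigl(\log {\rm RN}(\phi)\bigr)(x,y),\]
where $\Psi$ is the map $f \mapsto ((x,y)\mapsto f(x)-f(y))$ which, as recorded just before the statement, embeds $\mathfrak{B}_p$ into $L^p(X\times X-\Delta,\nu)$. Thus $c_\phi \in L^p(\nu)$.

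There is no real obstacle here; the statement is essentially bookkeeping combining Condition $(*)$, compactness/continuity to control $\log$, and the already-established Besov-space embedding. The only point to watch is that the bound on ${\rm RN}(\phi)$ from above and below need not be uniform in $\phi$, but the lemma is stated for a fixed $\phi \in \Gamma$, so this is not an issue.
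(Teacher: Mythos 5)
Your proof is correct and follows the same route as the paper: use Condition $(*)$ plus compactness to confine ${\rm RN}(\phi)$ to a compact interval in $(0,\infty)$, note that $\log$ is Lipschitz there so the composition is Lipschitz on $(X,\iota)$, and conclude via Lemma \ref{embed} and the embedding $\Psi$ of $\mathfrak{B}_p$ into $L^p(\nu)$. Your closing remark that the bounds need not be uniform in $\phi$ is a sensible caveat and consistent with the paper.
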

\begin{proof} By assumption, the function 
${\rm RN}(\phi)$ assumes values in a compact interval $[a,b]\subset (0,\infty)$,
moreover as a function
$(X,\iota)\to [a,b]$, it is Lipschitz continuous. 

Now the restriction of the function $\log$ to a compact interval 
$[a,b]\subset (0,\infty)$ is Lipschitz continuous and hence the same holds true
for the composition $x\to \log({\rm RN}(\phi(x)))$ since the composition of 
Lipschitz functions is Lipschitz. Thus the lemma now follows from
Lemma \ref{embed}.
\end{proof}

%\begin{example}\label{projective}
%For $n\geq 3$ the group $SL(n,\mathbb{R})$ acts
%as a group of transformations on the real projective space
%$\mathbb{R}P^{n-1}$, equipped with the standard Riemannian metric
%of constant curvature $1$.
%If $\Gamma\subset SL(n,\mathbb{R})$ is any lattice, then the action
%of $\Gamma$ is an action by diffeomorphisms preserving the 
%measure class of the Haar measure. This measure is Ahlfors regular.
%However, it is not too hard to see that the action is not
%$c$-coarsely conformal for any $c>0$. Recall that we have
%$H^1(\Gamma,V)=0$ for any $L^p$-space $V$ \cite{BFGM07}. 
%\end{example}

\begin{remark} 
By the formula (\ref{rn}), if Condition $(*)$ holds true 
then for 
$p\geq 2Q$ the group $\Gamma$ admits
an isometric action on $L^p(\nu)$ via 
\[(\phi,f)(x,y)=f(\phi^{-1}(x),\phi^{-1}(y)){\rm RN}_\nu(\phi)^{-1/p}(x,y).\]
In other words, we obtain a representation $\Pi$ of $\Gamma$ into the group of linear
isometries of $L^p(\nu)$. 

A cocycle for this isometric action is a function $a:\Gamma\to L^p(\nu)$ such that
\[a(\phi \circ \psi)= \Pi(\phi)(a(\psi)) +a(\phi).\]
Thus the cocycle $c$ is a cocycle for this
isometric representation if and only if 
$\nu$ is invariant under 
the action of $\Gamma$. A necessary condition for this to hold is 
that the functions ${\rm RN}_\nu(\phi)$ are uniformly bounded, 
independent of $\phi$. In general, it is unclear whether there exists
a constant $Q>0$ such that this boundedness condition holds true.
\end{remark}

\begin{example}\label{projective}
  Consider the standard projective action of the group $\Gamma=SL(n,\mathbb{Z})$
  on $X=\mathbb{R}P^{n-1}$ $(n\geq 2)$. This action is by diffeomorphisms and hence if 
 $\mu$ denotes the volume form induced by the round metric, then 
  the Radon Nikodym derivatives of the elements of $\Gamma$ 
  are Lipschitz continuous.
  Thus for any $Q\geq 1$ and any $p\geq 2Q$ 
  one obtains a cocycle $c$ with values in the space of 
  functions which are composed of pull-backs of 
  functions in the
  Besov space $\mathfrak{B}_p$ via the first and second factor projection.  
  For $n=2$ and $Q=1,p\geq  2$, this 
  is a cocycle for an isometric action on an $L^p$-space which 
  defines a nontrivial cohomology class for $\Gamma$. For $n\geq 3$ 
  it is unclear whether there exists $Q\geq 1,p\geq 2Q$ 
  such that this cocycle is a cocycle for an isometric action on 
$L^p$.
By the main result of \cite{BFGM07}, if such numbers $p,Q$ exist
then the cocycle is a coboundary.  
\end{example}

\section{Measure preserving actions on products}\label{products} 

The goal of this section is to find conditions which 
guarantee that the cocycle for the group $\Gamma$ 
constructed in Section \ref{coho} 
defines a nontrivial class in the first cohomology of $\Gamma$
with coefficients in the representation.

Assume for the remainder of this section that $(X,d,\mu)$ is 
a compact  
\emph{Ahlfors regular} metric measure space of dimension $Q\geq 1$.
This means that $(X,d)$ is a metric space, and 
there exists a number $C>0$ such that 
\[\mu(B(\xi,r))\in [C^{-1}r^Q,Cr^Q]\]
for all $\xi\in X$ and all $r\leq {\rm diam}(X)/2$, where 
$B(\xi,r)$ denotes the open ball of radius $r$ about $\xi$.
Assume furthermore that the countable group $\Gamma$ acts on 
$(X,d)$ as a group of bi-Lipschitz homeomorphisms.
This implies that $\Gamma$ preserves the measure class of $\mu$.
In particular, for every $\phi\in \Gamma$ and $\mu$-almost every
$x\in X$ the Radon Nikodym dervative ${\rm RN}(\phi)(x)$ of 
$\phi_*\mu$ with respect to $\mu$ exists at $x$.
Put
\[\langle \phi,x\rangle=\log 
{\rm RN}(\phi)(x);\] 
then $c_\phi(x,y)=\langle\phi,x\rangle-\langle \phi, y\rangle$.

A homeomorphism $\phi$ of a compact metric space $(X,d)$ has \emph{an attracting 
fixed point} $x$ if there exists a compact neighborhood $U$ of $x$
such that $\phi(U)\subset U$ and 
\[\cap_{j>0} \phi^j(U)=\{x\}.\]

Write as before 
$\nu=d^{-2Q} \mu\times \mu$, viewed as a measure on 
$X\times X-\Delta$.
% where $Q$ is the Ahlfors regular dimension of $(X,d,\mu)$. 
For a measure class preserving map 
$\phi$ put ${\rm RN}_\nu(\phi)=d\phi_*\nu/d\nu$.

%We say that $x$ is a \emph{regular} point for the function $\iota$ if 
%$\iota(x,y)=0$ if and only if $y=x$.
 
% The assumptions in the following proposition are very strong. Namely, 
% although we pointed out that for a finitely generated group, and action 
% on a compact metrizable space can always be made to a Lipschitz action, 
% in general the metric used is not Ahlfors regular, nor can we hope that 
% the coboundary condition can be met.
 
\begin{proposition}\label{lowerestimate}
Let $(X,d,\mu)$ be an Ahlfors regular metric measure space 
of dimension $Q\geq 1$.
Assume that $\Gamma$ acts on $X$ preserving the measure class of 
$\mu$.
% with Lipschitz continuous Radon Nikodym derivatives. 
Assume moreover that  there is $\phi\in \Gamma$ and a number $C>0$ 
with the following properties. 
\begin{enumerate}
\item $\phi^{-1}$ has an attracting fixed point.
\item For all $\ell \in \mathbb{Z}$ we have ${\rm RN}_\nu(\phi^\ell)\in 
[C^{-1},C]$. 
\end{enumerate}
Then for $p\geq 2Q$ and $m>0$ 
%and $\phi\in \Gamma$ which acts with north-south dynamics on $X$
there is a number $k_0=k_0(\phi,p,m)>0$ such that
\[\Vert c_{\phi^k}\Vert_{L^p}^p\geq m\]
for all $k\geq k_0$.
\end{proposition}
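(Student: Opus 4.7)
The plan is to lower bound $\|c_{\phi^k}\|_{L^p(\nu)}^p$ by producing, for each large $k$, a set $E_k \subset X \times X - \Delta$ of $\nu$-measure uniformly bounded below, on which $|c_{\phi^k}|$ grows without bound in $k$. Write $f_k = \log {\rm RN}(\phi^k)$ and $g_k = \log {\rm RN}(\phi^{-k})$, so $c_{\phi^k}(x,y) = f_k(x) - f_k(y)$. By condition (2) combined with formula (\ref{rn}), the measure $\nu$ is quasi-invariant under every power of $\phi$ with uniform constant, and the change of variables $(x,y) \mapsto (\phi^k x, \phi^k y)$ together with the cocycle relation yields $\|c_{\phi^k}\|_{L^p(\nu)} \asymp \|c_{\phi^{-k}}\|_{L^p(\nu)} = \|g_k(x) - g_k(y)\|_{L^p(\nu)}$ uniformly in $k$, so it suffices to work with the latter.

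From condition (1), let $U$ be the compact neighborhood of $x_0$ with $\phi^{-1}(U)\subset U$ and $\bigcap_j\phi^{-j}(U)=\{x_0\}$. Since $\mu$ has no atoms and the decreasing sequence $\phi^{-k}(U)$ shrinks to $\{x_0\}$, continuity of measure gives $\mu(\phi^{-k}(U)) \to 0$. Combining the push-forward identity $\int_U{\rm RN}(\phi^{-k})\,d\mu = \mu(\phi^{-k}(U))$ with Markov's inequality yields a subset $\tilde A_k\subset U$ with $\mu(\tilde A_k)\geq\mu(U)/2$ on which
\[
g_k(x)\leq M_k:=\log\bigl(2\mu(\phi^{-k}(U))/\mu(U)\bigr)\longrightarrow-\infty.
\]

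The key geometric step uses condition (2): taking logarithms of formula (\ref{rn}) and using ${\rm RN}_\nu(\phi^{-k})\in[C^{-1},C]$ yields, uniformly in $k$,
\[
g_k(x)+g_k(y)=2Q\log\bigl(d(\phi^{-k}x,\phi^{-k}y)/d(x,y)\bigr)+O(1).
\]
Fix $r_0>0$ so that $U\subset B(x_0,r_0)$, set $V=X\setminus B(x_0,2r_0)$, choose $\epsilon_1>0$, and define
\[
E_k=\bigl\{(x,y)\in\tilde A_k\times V : d(\phi^{-k}x,\phi^{-k}y)\geq\epsilon_1\bigr\}.
\]
On $E_k$, $d(x,y)\geq r_0$ and $d(\phi^{-k}x,\phi^{-k}y)\in[\epsilon_1,{\rm diam}(X)]$, so the right-hand side of the identity is $O(1)$; since $g_k(x)\leq M_k$ on $\tilde A_k$, solving for $g_k(y)$ gives $g_k(y)\geq|M_k|-O(1)$, whence $|g_k(x)-g_k(y)|\geq 2|M_k|-O(1)\to\infty$. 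Also $d^{-2Q}(x,y)\geq{\rm diam}(X)^{-2Q}$ on $E_k$.

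The main obstacle is the uniform lower bound $\nu(E_k)\geq c>0$. Here the strength of hypothesis (2)—uniform boundedness of ${\rm RN}_\nu(\phi^\ell)$ for \emph{all} powers $\ell$, not just a single one—enters decisively: to verify that the ``bad'' slice $V\cap\phi^k(B(\phi^{-k}x,\epsilon_1))$ does not absorb all of $V$ as $k\to\infty$, one estimates its $\mu$-measure via $\int_{B(\phi^{-k}x,\epsilon_1)}{\rm RN}(\phi^k)\,d\mu$ and uses the uniform quasi-conformal distortion bound from (2) for $\phi^k$ to control it away from $\mu(V)$. Granting $\nu(E_k)\geq c$, one concludes
\[
\|c_{\phi^{-k}}\|_{L^p(\nu)}^p\geq(2|M_k|-O(1))^p\,c\geq m
\]
for $k$ large enough, and the same bound transfers to $\|c_{\phi^k}\|_{L^p(\nu)}^p$ by the symmetry noted at the start.
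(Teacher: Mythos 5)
Your reduction to $c_{\phi^{-k}}$ and the identity $g_k(x)+g_k(y)=2Q\log\bigl(d(\phi^{-k}x,\phi^{-k}y)/d(x,y)\bigr)+O(1)$ are fine (modulo a push-forward slip: $\int_U{\rm RN}(\phi^{-k})\,d\mu=\mu(\phi^{k}(U))$, not $\mu(\phi^{-k}(U))$, so the Markov step should be run with ${\rm RN}(\phi^{k})$ on $U$). The fatal problem is the step you yourself flag as the main obstacle: the claim $\nu(E_k)\geq c>0$ is false, and condition (2) cannot rescue it. Test it on the motivating example of a loxodromic isometry acting on $S^1=\partial\mathbb{H}^2$ with $\phi^{-1}$ attracting to $x_0$ and repelling from $x_1$: for every $y\neq x_1$ one has $\phi^{-k}y\to x_0$, and $\phi^{-k}(U)\to\{x_0\}$ as well, so the fibre $\{y\in V: d(\phi^{-k}x,\phi^{-k}y)\geq\epsilon_1\}$ is contained in $X\setminus\phi^{k}\bigl(B(x_0,\epsilon_1/2)\bigr)$ for large $k$, a set shrinking to $\{x_1\}$, which is $\mu$-null. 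Quantitatively $\mu$ of this fibre decays like $e^{-ckQ}$ while $|M_k|$ grows only linearly in $k$, so your candidate contribution $(2|M_k|)^p\,\nu(E_k)$ tends to $0$, not to $\infty$. Your proposed fix via condition (2) does not work because (2) bounds the distortion of $\nu$ on $X\times X-\Delta$, not of $\mu$ on $X$; the one-variable derivative ${\rm RN}(\phi^{k})$ is genuinely unbounded in $k$ near the repelling point, and $\mu\bigl(\phi^{k}(B(\phi^{-k}x,\epsilon_1))\bigr)\to 1$.

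The deeper issue is structural: pairs $(x,y)$ with $y$ in a \emph{fixed} set $V$ bounded away from $x_0$ contribute only $O(1)$ to $\int|c_{\phi^k}|^p\,d\nu$, so no single-set, single-scale construction can produce an unbounded lower bound. The divergence has to be harvested from pairs $(x,y)$ that are \emph{both} close to the fixed point $z$ of $\phi^{-1}$, where the weight $d(x,y)^{-2Q}$ in $\nu$ compensates the shrinking $\mu\times\mu$-mass. This is what the paper does: it takes $x$ in the annulus $B(z,b\tau^{-8\ell})-B(z,b\tau^{-8\ell-2})$ and $y$ in $B(z,b\tau^{-8\ell-6})-B(z,b\tau^{-8\ell-8})$ for $\ell=2,\dots,m$; Ahlfors regularity gives each such product of annuli a $\nu$-mass $\geq\kappa>0$ uniformly in $\ell$, and condition (2), applied through the ratio
${\rm RN}(\phi^{k})(y)d(z,y)^{2Q}/\bigl({\rm RN}(\phi^{k})(x)d(z,x)^{2Q}\bigr)$
anchored at the fixed point $z$, forces $|c_{\phi^{k}}|\geq u>0$ there (a \emph{fixed} lower bound, not a growing one). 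Summing the $m-2$ disjoint scales gives $\|c_{\phi^k}\|_{L^p}^p\geq(m-2)u^p\kappa$. So the growth comes from the number of scales, not from the size of the cocycle on one set; your argument would need to be reorganized along these lines.
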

\begin{proof} 
%Let $\epsilon >0$ be the exponent used
 % to construct the visual distance $\delta_{x_0}$
%on $\partial Y$ from the
%  Gromov product based at $x_0$. 
%  Let $d$ be the metric on 
%  $X$ for which $\mu$ is Ahlfors regular, with exponent $Q$.
%By Lemma \ref{average} and the definitions,  
%H\"older continuity of the identity $(\partial Y,\delta_{x_0})\to 
%(\partial Y,d)$ 
%there exist numbers $\ell_1>\ell_2,C>0$ such tha%t
%\begin{equation}\label{uniformhoelder}
%Cd^{\ell_1}\leq \delta_{x_0} \leq C^{-1}d^{\ell_2}.\end{equation}
%Let $a>0$ be such that 
%for every $\xi\in \partial Y$ there exists an $a$-rough geodesic
%$\gamma_{x_0,\xi}$ connecting $x_0$ to $\xi$.
% 
% Recall the estimate 
%(\ref{rn2}). 
%Let $C>1$ be such that for all $\psi\in \Gamma$ and 
%for any two points $\eta,\omega\in \partial X$ we have
%\[\mu(\psi (B(x,r))/{\rm dial}(\psi,x,r)^Q\in [C^{-1},C].\] Such a number
%exists by $c$-coarse conformality of the elements of $\Gamma$.
%
Let $\phi\in \Gamma$ and assume that $\phi^{-1}$ admits an attracting fixed point 
$z\in X$. 
%By Lemma \ref{phiinvariant},
Assume moreover that there exists a number $C>0$ such 
%that there exists a continuous bounded function $u$ on 
%$X\times X-\Delta$ such that the measure 
%$u\nu$ is $\phi$-invariant. This is equivalent to stating 
if we denote
by ${\rm RN}(\phi^\ell)$  the Radon Nikodym derivative of 
$\phi^\ell$ with respect to $\mu$ then we have 
\begin{equation}\label{rn2}
{\rm RN}_\nu(\phi^\ell)(x,y)={\rm RN}(\phi^\ell)(x){\rm RN}(\phi^\ell)(y) d(\phi^\ell(x),
\phi^\ell(y))^{-2Q}d(x,y)^{2Q}\in [C^{-1},C]\end{equation}
for all $\ell\in \mathbb{Z}$ and 
almost all $(x,y)\in X\times X-\Delta$.
By enlarging $C$ if necessary we may moreover assume that 
for all $x\in X$ and all $r<{\rm diam}(X)$ we have 
\[\mu(B(x,r))\in [C^{-1}r^Q,Cr^Q].\]
 
 %Let $\phi\in \Gamma$ and assume that $\phi$ acts on 
% $X$ with north-south dynamics. 
 %Let $\xi_+,\xi_-\in \partial X$ 
 %be the attracting and repelling fixed points for the action of 
 %$\phi$ on $X$, respectively. 
 Let $b>0$ be such that the closed ball 
 $\bar B(z,b)$ of radius $b$ about $z$ is contained in a compact neighborhood
 $U$ of $z$ as
 in the definition of an attracting fixed point. 
 By assumption, there exists a number $\ell_0 >0$ such that 
$\phi^{-\ell_0}(U)\subset \bar B(z,b)$ and hence 
\[\phi^{-\ell} \bar B(z,b)\subset
\phi^{-\ell_0}(\phi^{-\ell+\ell_0}U)\subset \phi^{-\ell_0}U\subset \bar B(z,b)\text{ for all }\ell\geq \ell_0.\]
 %
 %This can always be achieved by choosing $\bar B(z,b)$ to be contained in such 
 %a set and by 
 %perhaps replacing 
 %$\phi$ by $\phi^\ell$ for some sufficiently large $\ell$ which 
 %will guarantee that $\phi^{-1} \bar B(z,b)\subset \bar B(z,b)$.
 % Since $\phi$ acts with north-south dynamics
%on $\partial X$, with fixed points $\xi_+,\xi_-$, 
%for every $q>0$ 
%there exists a number $k=k(q)>0$ such that 
%\[\phi^{\pm k} B(\xi_{\pm}, \rho)\subset 
%B(\xi_{\pm},\rho/2C^{4})\] 
%for all $\rho \leq b$ where $b>0$ is as above. 
%To ease notations, replace $\phi$ by $\phi^k$. 
The same argument also shows that if $\phi^{-j}B(z,b)\subset B(z,\delta)$ for some
$\delta >0$, then $\phi^{-k}B(z,b)\subset B(z,\delta)$ for all $k\geq \ell_0+j$. 

%Via replacing $\phi$ by $\phi^k$ for a suitable $k\geq 1$ we may assume that
%there is a number $\tau <1/4$ with the following properties.
%\begin{enumerate}
%\item $\phi^{-1}(B(\xi_-,b)\subset 
%B(\xi_-,b/2$.
%\item
%$\phi^{-1}(B(\xi,b)\supset B(\xi_-,\tau b)$.
%\item $\phi^{-1}B(\xi,\tau b/2)\supset B(\xi,\tau^2 b)$.
%\end{enumerate} 

Let $\tau=\max\{{\rm diam}(X)/b,(2C^2)^{1/Q}\}>2$. 
Then for all $r\leq b$ we have
\begin{equation}\label{volume}
\mu(B(z,r)-B(z,\tau^{-1}r))\geq C^{-1}r^Q-C\tau^{-Q}r^Q
\geq C^{-1}r^Q/2.\end{equation}
Choose a sequence $j_m\to \infty$ such that 
\begin{equation}\label{contraction}
\phi^{-j_m} B(z,b)\subset B(z,b\tau^{-8m})\end{equation}
for all $m\geq 1$. Note that we then have 
$\phi^{-k}B(z,b)\subset B(z,b\tau^{-8m})$ for any $k\geq j_m+\ell_0$ and 
any $m$.

Let $m\geq 1$ be arbitrary. 
We claim that there is a universal constant $u>0$ such that for 
all $\ell \leq m-2$, all $x\in 
B(z,b\tau^{-8\ell})-B(z,b\tau^{-8\ell-2})$  
and all $y\in B(z,b\tau^{-8\ell-6})-B(z,b\tau^{-8\ell-8})$ we have
$\langle \phi^{j_m},y \rangle-\langle \phi^{j_m},x \rangle \geq u$. 

To see that this is the case, 
note that since $\ell\leq m-2$, 
we have 
\[\phi^{j_m}x,\phi^{j_m} y\in X-B(z,b)\] 
and hence
$d(z,\phi^{j_m} x)\in [b,\tau b], d(z,\phi^{j_m}y)\in [b,\tau b]$ 
(recall that ${\rm diam}(X)\leq \tau b$). 
Now let us assume for the moment that the Radon Nikodym derivative of 
$\phi^{j_m}$ exists a $z$. Then together with 
the assumption (\ref{rn2}) and the fact that $\phi^{j_m}z=z$ for all $m$, we conclude 
that 
\begin{align} \label{ratio}
 &\frac{ {\rm RN}(\phi^{j_m})(z){\rm RN}(\phi^{j_m})(y) d(\phi^{j_m}(z),
\phi^{j_m}(y))^{-2Q}d(z,y)^{2Q}}
{{\rm RN}(\phi^{j_m})(z){\rm RN}(\phi^{j_m})(x) d(\phi^{j_m}(z),
\phi^{j_m}(x))^{-2Q}d(z,x)^{2Q}}   \\
\sim & \frac{ {\rm RN}(\phi^{j_m})(y) d(z,y)^{2Q}}
{ {\rm RN}(\phi^{j_m})(x) d(z,x)^{2Q}} \sim {\rm const}. \notag
\end{align}
Here the first $\sim$ in the estimate means equality up to a factor contained
in the interval $[\tau^{-4Q},\tau^{4Q}]$ (by erasing two factors in the numerator
and denominator using the above estimate), 
and the
second $\sim$ means that value of the ratio is contained in 
the interval $[C^{-2}\tau^{-4Q}, C^2\tau ^{4Q}]$ (which follows from the fact that
by (\ref{rn2}), the ratio of numerator and denominator in the 
first term of the expression (\ref{ratio}) is at most $C^2$).
Now $\frac{d(z,y)^{2Q}}{d(z,x)^{2Q}}\leq \tau^{-8Q}$, and as
$\tau^{-8Q}C^2\tau^{4Q}\leq \tau^{-3Q}$, 
taking the logarithm yields the claim. 

The Radon Nikodym derivative of $\phi^{j_m}$ with respect to $\mu$ 
may not exist at $z$. However, 
it exists almost everywhere 
and hence by Ahlfors regularity of $\mu$, we can 
find a point $\hat z$ arbitrarily close to $z$ which is mapped by
$\phi^{j_m}$ into an arbitrarily small neighborhood of $z$ and such that 
the Radon Nikodym derivative of $\phi^{j_m}$ exists at $\hat z$. Replacing 
$z$ by $\hat z$ in formula (\ref{ratio}) then yields the statement we were looking for.

On the other hand, by (\ref{volume}), and Ahlfors regularity of $\mu$, 
there exists a universal constant
$\kappa >0$ such that 
\[\nu((B(\xi,b\tau^{-8\ell})-B(\xi,b\tau^{-8\ell -2}))
\times (B(\xi,b\tau^{8\ell-6})-B(\xi,b\tau^{8\ell-8}))) \geq \kappa\] for all 
$\ell\in [2,m]$. Since the value of $c_{\phi^{j_m}}$ on these sets is 
bounded from below by $u>0$, we have 
\[\int \vert c_{\phi^{j_m}}\vert^p d\nu\geq (m-2)u^p\kappa.\]
As the right hand side of this inequality tends to $\infty$ as 
$m\to \infty$, the proposition follows. 
\end{proof}

\section{An Ahlfors regular metric on the boundary of a Hadamard manifold}
\label{anahlfors}

In this section we consider an $n$-dimensional 
simply connected complete Riemannian manifold 
$(n\geq 2)$ of 
sectional curvature contained in the interval $[-b^2,-a^2]$ for numbers
$0<a<b<\infty$. 
We also require that there is a universal
upper bound for the norm of the covariant derivative 
$\nabla R$ of the curvature tensor $R$ of $M$.
Our goal is to construct a metric $d$ on 
the \emph{ideal boundary} $\partial M$ of $M$
with the following properties.
\begin{enumerate}
\item $d$ is Ahlfors regular.
\item The Ahlfors regular measure $\mu$  defined by $d$ is 
contained in the Lebesgue measure class.
\item Isometries of $M$ 
act by bi-Lipschitz transformations on $(\partial M,d)$.
\end{enumerate}
Note that for general Hadamard manifolds
of bounded negative curvature, it is not clear whether (2) makes sense, 
and it is to this end that we shall use the assumption 
that $\vert \nabla R\vert$ is bounded.

A point $\xi\in \partial M$ determines a \emph{Busemann function}
$b_\xi$ at $\xi$. Its level sets are the \emph{horospheres} at 
$\xi$. By the assumption on $M$, 
these Busemann functions 
are of class $C^3$, and their gradients ${\rm grad}\,b_\xi$ 
are $C^2$-vector fields on 
$M$ \cite{Shc83}.

Let $T^1M$ be the unit tangent bundle of $M$. 
The metric on $M$ induces a natural metric on $T^1M$, the 
\emph{Sasaki metric}. This metric defines a distance function
and hence a H\"older structure for functions on $T^1M$. 
The canonical projection 
\[\Pi:T^1M\to M\]
is a Riemannian submersion.

For a point $x\in M$ and a unit tangent vector 
$v\in T_x^1M$ let $m(v)>0$ be the mean curvature at $x$ of the 
horosphere in $M$ whose outer normal field passes through $v$.
That is, the horosphere is a level set of the Busemann function defined 
by the ideal boundary point $\gamma_v(-\infty)\in \partial M$, where
$\gamma_v$ denotes the geodesic with initial velocity $\gamma_v^\prime(0)=v$.

The following result is perhaps well known. We provide a proof in the appendix
(Corollary \ref{meancurvature}). 

\begin{proposition}\label{hoelder}
The function $m:v\to m(v)$ on $T^1M$ is H\"older continuous.
\end{proposition}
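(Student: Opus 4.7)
The plan is to identify $m(v)$ with the trace of the shape operator $U(v)$ of the horosphere at $\Pi(v)$ with outward unit normal $v$, and to deduce H\"older continuity of $v \mapsto U(v)$ from the matrix Riccati equation satisfied by this operator along $\gamma_v$. In a parallel frame over $\gamma_v$ on the normal bundle $\gamma_v'(t)^\perp$, the Riccati equation reads
\[U'(t) + U(t)^2 + R_v(t) = 0,\]
with $R_v(t)$ the curvature operator $R(\cdot,\gamma_v'(t))\gamma_v'(t)$; the pinching $-b^2 \le K \le -a^2$ gives the uniform two-sided a priori bound $aI \le U(t) \le bI$ for all $t \in \mathbb{R}$. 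Since $m(v) = \operatorname{tr} U(v)$, it suffices to show that $v \mapsto U(v)$ is H\"older.

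The first ingredient is exponential contraction of the Riccati flow in forward time: two symmetric solutions $U_1, U_2 \ge aI$ of the same Riccati equation have difference $D = U_1 - U_2$ satisfying $D' = -U_1 D - D U_2$, so $\|D(t)\| \le e^{-2at}\|D(0)\|$ for $t \ge 0$. In particular, $U(v)$ is the unique symmetric bounded solution along $\gamma_v$, and for every $T > 0$ the shape operator $U_T(v)$ at $\Pi(v)$ of the geodesic sphere of radius $T$ centered at $\gamma_v(-T)$ differs from $U(v)$ by at most $C e^{-2aT}$. The second ingredient is a perturbation estimate in $v$: for $v, w \in T^1M$ at Sasaki distance $\epsilon$, backward Jacobi field comparison with the lower curvature bound $-b^2$ yields a bound $C\epsilon e^{bt}$ on the Sasaki distance between $\Phi^{-t}v$ and $\Phi^{-t}w$; using the hypothesis $|\nabla R| \le \mathrm{const}$, the operator-norm difference between the two curvature operators $R_v(-t)$ and $R_w(-t)$, after identification by parallel transport along a short geodesic segment between the footpoints, is at most $C\epsilon e^{bt}$. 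Expressing both Riccati equations on a common Euclidean space and applying Gronwall's inequality then yields
\[\|U_T(v) - U_T(w)\| \le C\epsilon e^{cT}\]
for a constant $c = c(a,b,|\nabla R|)$.

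Combining the two steps,
\[\|U(v) - U(w)\| \le C\bigl(e^{-2aT} + \epsilon e^{cT}\bigr),\]
and optimizing with $T = (2a+c)^{-1}\log(1/\epsilon)$ yields $\|U(v) - U(w)\| \le C \epsilon^{2a/(2a+c)}$. Taking traces gives the desired H\"older bound on $m$. The chief obstacle is making the second ingredient rigorous: one must compare two Riccati ODEs along distinct geodesics on a common vector space, simultaneously trivializing the normal bundles of $\gamma_v$ and $\gamma_w$ by parallel transport along a reference curve, writing the curvature operators in the induced frames, and estimating the error from the exponential backward divergence of the geodesics together with the Lipschitz control on $R$ supplied by the bound on $\nabla R$. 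A secondary issue is that $U_T$ is singular at the center $\gamma_v(-T)$ of the sphere, so one must start the comparison slightly inside, at $\gamma_v(-T{+}1)$ say, where the sphere shape operator has already been tamed by the upper curvature bound via Rauch comparison.
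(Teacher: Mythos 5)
Your plan is essentially the argument the paper carries out in the appendix: approximate the horosphere shape operator by a finite-time Riccati solution with error $O(e^{-\alpha T})$ (Lemma \ref{hoelderclose}), compare the finite-time solutions for $v$ and $w$ in a common parallel frame using the uniform bound on $\nabla R$ (the content of Lemma \ref{controlq} together with the Eschenburg--Heintze Riccati comparison), and then optimize $T$ against the Sasaki distance of $v,w$ to extract a H\"older exponent. The only cosmetic differences are that the paper uses equidistant hypersurfaces of a totally geodesic hypersurface (so both comparison solutions have the same zero initial condition, avoiding your sphere-center singularity) and a multiplicative Riccati comparison in place of your additive Gronwall estimate.
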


We next observe

\begin{lemma}\label{symmetry}
There exists a number $C_0>0$ with the following property. 
Let $\gamma\subset M$ be any geodesic and let $t>0$; then 
\[\vert \int_0^t m(\gamma^\prime(s))ds-\int_0^t m(-\gamma^\prime(s))ds \vert 
\leq C_0.\]
\end{lemma}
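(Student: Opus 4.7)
The plan is to reinterpret each of the two integrals as the log Jacobian of the geodesic flow $\Phi^t$ on $T^1M$ restricted to one of the two horospherical distributions, and then to use invariance of the Liouville measure together with uniform transversality of these distributions (a consequence of the pinching $-b^2\le K\le-a^2$). First I would write $m(\gamma'(s))=\operatorname{tr} A^-(s)$, where $A^-(s)$ is the positive symmetric shape operator at $\gamma(s)$ of the horosphere centered at $\gamma(-\infty)$ with outward normal $\gamma'(s)$, and similarly $m(-\gamma'(s))=\operatorname{tr} A^+(s)$ for the horosphere at $\gamma(+\infty)$. Both operators act on $\gamma'(s)^\perp$, satisfy the standard Riccati equations along $\gamma$, and by Rauch comparison their eigenvalues lie in a fixed interval $[a,b]\subset(0,\infty)$.

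Next I would identify the tangent spaces $E^u(v),E^s(v)$ to the strong unstable and strong stable manifolds of $\Phi^t$ at $v\in T^1M$ with $\gamma_v'(0)^\perp$ via $\xi\mapsto J(0)$, where $J$ is the Jacobi field along $\gamma_v$ characterized by $J'(0)=A^-(0)J(0)$ in the unstable case and by $J'(0)=-A^+(0)J(0)$ in the stable case. Since $d\Phi^t$ carries $J(0)$ to $J(t)$, applying the Liouville--Abel formula to the fundamental matrix of the linear ODE $J'=A^-J$ (resp.\ $J'=-A^+J$) gives
\[
\log\det\bigl(d\Phi^t|_{E^u(v)}\bigr)=\int_0^t m(\gamma'(s))\,ds,\qquad
\log\det\bigl(d\Phi^t|_{E^s(v)}\bigr)=-\int_0^t m(-\gamma'(s))\,ds.
\]

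Third, I would invoke $\Phi^t$-invariance of the Liouville measure $\lambda$ on $T^1M$. Writing $\lambda=\tilde\rho(v)\,\omega^u\wedge\omega^s\wedge dt$, where $\omega^u,\omega^s$ are the horospheric volume forms on $E^u,E^s$ and $\tilde\rho$ is the resulting density, invariance translates into
\[
\det\bigl(d\Phi^t|_{E^u(v)}\bigr)\cdot\det\bigl(d\Phi^t|_{E^s(v)}\bigr)=\tilde\rho(v)/\tilde\rho(\Phi^t v).
\]
Taking logarithms and substituting the two identities above reduces the lemma to a uniform two-sided bound on $\tilde\rho$.

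That last bound is the main obstacle. The density $\tilde\rho$ encodes the relative position of $E^u(v)$ and $E^s(v)$ in $T_vT^1M$: in the Sasaki horizontal/vertical splitting, $E^u(v)$ and $E^s(v)$ appear as the graphs of $A^-(0)$ and $-A^+(0)$ over $\gamma_v'(0)^\perp$, and a direct computation gives $\tilde\rho(v)\asymp\det(A^-+A^+)/\sqrt{\det(I+(A^-)^2)\det(I+(A^+)^2)}$. The eigenvalue pinching for $A^\pm$ established in the first step pinches both numerator and denominator between positive constants depending only on $n,a,b$, yielding a constant $C_0=C_0(n,a,b)$ as required. Note that this argument uses only the curvature pinching; the $\nabla R$ hypothesis enters the paper through Proposition~\ref{hoelder}, not through this lemma.
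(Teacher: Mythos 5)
Your proposal is correct and is essentially the paper's own argument: the paper phrases it infinitesimally via the Lie derivative $\mathcal{L}_X\tilde\lambda=(m(v)-m(-v))\tilde\lambda$ of the product form $\tilde\omega=\omega^{su}\wedge\omega^{ss}$ and the uniform pinching $\tilde\lambda=\kappa\lambda$ with $\kappa\in[C^{-1},C]$, which is exactly your integrated determinant identity plus the two-sided bound on $\tilde\rho=1/\kappa$ coming from uniform transversality of the horospherical subbundles. (Your explicit formula for $\tilde\rho$ carries the Sasaki normalization factors $\sqrt{\det(I+(A^{\pm})^2)}$, which are absent if $\omega^u,\omega^s$ are the horospheric volume forms as in the paper, but both versions are uniformly pinched so nothing changes.)
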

\begin{proof}
The Lebesgue Liouville measure  
$\lambda$ on $T^1M$ is the measure 
defined by the volume form of the Sasaki metric. This volume form,
again denoted by $\lambda$, is
invariant under the geodesic flow $\Phi^t:v\to \gamma_v^\prime(t)$. It can be 
described as follows.

The tangent bundle $TT^1M$ of $T^1M$ has an orthogonal decomposition 
as $TT^1M={\mathcal H}\oplus {\mathcal V}$ where 
${\mathcal V}$ is the \emph{vertical tangent bundle}, that is,
the tangent bundle of the 
fibers of the fibration $T^1M\to M$, and where
${\mathcal H}$ is the \emph{horizontal bundle}
defined by the Levi Civita connection. 
Then 
\[\lambda=\omega_{\mathcal H}\wedge \omega_{\mathcal V}\] where 
$\omega_{\mathcal H}$ is an $n$-form which 
annihilates ${\mathcal V}$, 
$\omega_{\cal V}$ is an $(n-1)$-form which 
annihilates ${\cal H}$ and such that 
$\omega_{\cal V}$, $\omega_{\cal H}$ are defined by a choice of an orientation
and the Riemannian metric on ${\cal H},{\cal V}$.

The contraction $\iota_X\lambda$ of $\lambda$ 
with the generator $X$ of the geodesic 
flow $\Phi^t$ is a smooth $(2n-2)$-form $\omega$  
which is invariant under 
$\Phi^t$ and which equals $\iota_X\omega_{\cal H}\wedge
\omega_{\cal V}$. This $(2n-2)$-form then defines a
Radon measure on the space of geodesics 
$\partial M\times \partial M-\Delta$ 
which is invariant under the action of ${\rm Iso}(M)$.

There exists another natural $2n-2$-form on $T^1M$ which is defined 
as follows. 
For a given vector $v\in T^1M$ we can consider the
submanifolds $W^{ss}(v),W^{su}(v)$ 
of $T^1M$ defined by the inner and outer normal field of the
horosphere through $\Pi(v)$ at $\gamma_v(\infty)$ and $\gamma_v(-\infty)$, respectively.
By the assumption on $M$, this construction defines two continuous foliations 
$W^{ss},W^{su}$ of $T^1M$, with leaves 
of class $C^2$. Thus the tangent bundles $TW^{ss},TW^{su}$ 
of these foliations are defined, 
and by Proposition \ref{hoelder2}, 
they are H\"older continuous subbundles of $TT^1M$ of dimension 
$n-1$ which do not intersect. 
%In fact, Proposition \ref{hoelder2} implies that 
%$TW^{su},TW^{ss}$ are H\"older continuous subbundles of $TTM$. Since we 
%do not use this fact, we omit a more detailed discussion, 

Namely, for $v\in T^1M$ the orthogonal complement 
$v^\perp$ of $v$ in $T_{\Pi(v)}M$ 
has a natural isometric identification with both the orthogonal
complement of $X$ in ${\cal H}_v$ and the fiber ${\cal V}_v$.  
A tangent vector $Y$ of $TW^{su}$ 
at $v\in T^1M$ 
decomposes into 
\[Y=Y^h+Y^v\text{ where }Y^h\in X^\perp \subset 
{\cal H}_v,Y^v\in {\cal V}_v.\] 
With respect to the natural isometric identification
of $X^\perp\subset {\cal H}_v$ and ${\cal V}_v$, 
the linear map which sends $Y^h$ to $Y^v$ 
is given by the shape operator for the outer normal field
of the 
horosphere defined by $\gamma_v(-\infty)$, 
and this shape operator is a symmetric  
linear operator whose eigenvalues are bounded from above and 
below by universal positive constants. 

Similarly, a tangent vector $Z$ of $TW^{ss}$ at 
$v$ decomposes as 
\[Z=Z^h+Z^v\text{ where }Z^h\in {\cal H}_v,
Z^v\in {\cal V}_v\] 
and the linear map $X^\perp \subset {\cal H}_v\to {\cal V}_v$ which 
sends $Z^h$ to $Z^v$ is the shape operator for the inner normal field
of the horosphere 
defined by $\gamma_v(\infty)$, 
and this shape operator is a symmetric linear operator 
whose eigenvalues are bounded from above and below by universal 
negative constants. Using Proposition \ref{hoelder2}, 
this shows that the bundles $TW^{ss},TW^{su}$ are H\"older continuous and furthermore, 
the angle with respect to the Sasaki metric between a nonzero vector
of $TW^{ss}$ and a nonzero vector of $TW^{su}$ over a point $v\in T^1M$ 
is bounded from below by a
universal positive constant not depending on $v$. 

Thus we obtain another continuous
$(2n-2)$-form $\tilde \omega$ on $T^1M$ by defining  
\[\tilde \omega= \omega^{su}\wedge \omega^{ss}\] where the $n-1$-form
$\omega^{ss}$ annihilates $TW^{su}\oplus \mathbb{R}X$ 
and restricts to the volume form on 
the leaves of the foliation $W^{ss}$ which is induced from the pull-back
of the metric on horospheres in $M$, and similarly for $\omega^{su}$.
The $(2n-2)$-form $\tilde \omega$ annihilates the generator of the geodesic
flow and hence it can be represented as $\iota_X\tilde \lambda$ for 
a $2n-1$-form $\tilde \lambda$. Then $\tilde \lambda=\kappa \lambda$
for a continuous function $\kappa:T^1M\to \mathbb{R}$. By the above discussion,
there exists a constant $C>0$ such that $\kappa(T^1M)\subset [C^{-1},C]$. 

The volume form $\tilde \lambda$ is in general not invariant under 
the geodesic flow $\Phi^t$, but it is quasi-invariant, and its Lie derivative
${\cal L}_X\tilde \lambda$ in direction of the generator $X$ of $\Phi^t$ 
equals
\[{\cal L}_X\tilde \lambda(v)=(m(v)-m(-v))\tilde \lambda\]
since by construction and the properties of the mean curvature of 
horospheres we have
\[(\frac{d}{dt}\omega^{su}\circ \Phi^t\vert_{t=0})(v)=m(v)\omega^{su}(v)
\text{ and } (\frac{d}{dt} \omega^{ss}\circ \Phi^t\vert_{t=0})(v)=-m(-v)\omega^{ss}(v).\]

As a consequence, 
for $v\in T^1M$ and $t>0$ the logarithm at $v$ of the Jacobian of $\Phi^t$  
with respect to the measure $\tilde \lambda$
can be represented as 
\begin{equation} \label{integralbound}
\int_0^t m(\Phi^sv)ds -\int_0^t m(- \Phi^sv)ds.\end{equation}
On the other hand, the measure $\lambda$ is invariant under $\Phi^t$, and 
it equals a uniformly bounded multiple of the measure $\tilde \lambda$. 
Thus the Jacobian of $\Phi^t$ with respect to $\tilde \lambda$ is uniformly bounded,
independent of the basepoint and $t$. Then the same holds true for the 
integral (\ref{integralbound}) which shows 
the lemma. 
\end{proof}

For $v\in T^1M$ define 
\[f(v)=\frac{1}{2}(m(v)+m(-v)).\]
Since the function $m$ is H\"older continuous, the same holds true
for the function $f$. Furthermore, $f$ takes values in 
a fixed interval $[c,d]$ for $0<c<d$ since the mean curvature
of horospheres is bounded from above and below by a positive 
constant only depending on the curvature bounds.

We now use a construction which is well known in the case that the manifold
$M$ is the universal covering of a closed manifold, see for example 
the articles \cite{L95} and \cite{H97}. 

For $x\in M$ we shall define a 
Gromov type product $( \mid )_x$  
on $\partial M$  based at $x$.
To this end
consider first a point $\xi\in \partial M$ and two points 
$x,y\in M$. There are unique geodesic rays 
$\gamma,\eta:[0,\infty)\to M$ connecting $x,y$ to $\xi$, that is, such that
$\gamma(0)=x,\gamma(\infty)=\xi$ and $\eta(0)=y,\eta(\infty)=\xi$.
Given the parameterization for
$\gamma$, there exists a unique 
preferred parameterization $\hat \eta$ of $\eta$ (extended to a geodesic line) 
with $\hat \eta(u)=\eta(0)$ for some $u\in \mathbb{R}$ 
and so that $\gamma, \hat \eta$ are \emph{strongly asymptotic}, that is, 
\[\lim_{t\to \infty} d(\gamma(t),\hat \eta(t))=0.\]
We have

\begin{lemma}\label{converge}
The limit 
\[ q_\xi(x,y)= 
\lim_{t\to \infty}
\bigl(\int_0^t f(\gamma^\prime(s))ds-\int_u^t f(\hat \eta^\prime(s))ds\bigr)\]
exists. 
\end{lemma}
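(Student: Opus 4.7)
The plan is to reduce the question of existence of the limit to the question of integrability of $f(\gamma'(s)) - f(\hat\eta'(s))$ on $[0,\infty)$, and then to extract that integrability from Proposition \ref{hoelder} together with the standard exponential convergence of strongly asymptotic geodesics in negative curvature.

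First, for $t$ large enough that $t>u$, I would rewrite
\[\int_0^t f(\gamma'(s))\,ds - \int_u^t f(\hat\eta'(s))\,ds = \int_0^t \bigl(f(\gamma'(s)) - f(\hat\eta'(s))\bigr)\,ds + \int_0^u f(\hat\eta'(s))\,ds,\]
where the second summand is independent of $t$. Thus it suffices to show that the integral $\int_0^\infty \bigl(f(\gamma'(s)) - f(\hat\eta'(s))\bigr)\,ds$ converges absolutely, which I would verify by the Cauchy criterion.

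The main step, and probably the only nontrivial one, is to establish that the Sasaki distance $d_{\mathrm{Sas}}(\gamma'(s),\hat\eta'(s))$ decays exponentially in $s$. Since the sectional curvature of $M$ is bounded above by $-a^2<0$ and since $\gamma$ and $\hat\eta$ are strongly asymptotic, a standard Jacobi field comparison argument (identifying the variation between the two geodesics with a stable Jacobi field) yields
\[d(\gamma(s),\hat\eta(s))\;\leq\; C_1 e^{-as}\quad\text{and}\quad \angle\bigl(P_s\gamma'(s),\hat\eta'(s)\bigr)\;\leq\; C_2 e^{-as}\]
for $s$ large, where $P_s$ is parallel transport along the minimizing segment from $\gamma(s)$ to $\hat\eta(s)$. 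Combining these two estimates gives $d_{\mathrm{Sas}}(\gamma'(s),\hat\eta'(s))\leq C e^{-as}$. This is the technical heart of the lemma, and while it is classical, I expect it to require a short but careful argument reconciling the definition of the Sasaki metric with the Jacobi field bound.

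Granted this exponential decay, Proposition \ref{hoelder} (applied to both $m$ and $v\mapsto m(-v)$, and hence to $f$) with some H\"older exponent $\alpha\in (0,1]$ produces a Lipschitz-type estimate
\[\bigl|f(\gamma'(s)) - f(\hat\eta'(s))\bigr|\;\leq\; L\, d_{\mathrm{Sas}}(\gamma'(s),\hat\eta'(s))^{\alpha}\;\leq\; L C^{\alpha} e^{-a\alpha s},\]
which is integrable on $[0,\infty)$. Therefore the improper integral converges absolutely and the limit $q_\xi(x,y)$ exists, completing the proof.
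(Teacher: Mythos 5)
Your proposal is correct and follows essentially the same route as the paper: exponential decay of the Sasaki distance between the velocity fields of strongly asymptotic geodesics, combined with the H\"older continuity of $f$ from Proposition \ref{hoelder}, gives an integrable exponential bound on $|f(\gamma'(s))-f(\hat\eta'(s))|$. The paper simply asserts the estimate $d(\gamma'(s),\hat\eta'(s))\le C_0e^{-as}$ as standard, whereas you flag the Jacobi-field comparison behind it; your explicit bookkeeping of the constant term $\int_0^u f(\hat\eta'(s))\,ds$ is a harmless refinement.
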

\begin{proof} Let $d$ be the Sasaki metric on $T^1M$. 
If $-a^2<0$ is an upper curvature bound
for $M$ then we have 
\[d(\gamma^\prime(s),\hat \eta^\prime(s))\leq C_0e^{-as}\] for some $C_0>0$ 
depending on $\gamma,\eta$.

Since by Proposition \ref{hoelder} the function $f$ is H\"older 
continuous, there exist numbers $C_1>0,\alpha >0$ so that 
\[\vert f(\gamma^\prime(s))-f(\hat \eta^\prime(s))\vert \leq 
C_1d(\gamma^\prime(s),\hat \eta^\prime(s))^\alpha\leq C_1C_0^\alpha e^{-\alpha a s}
\text{ for all }s.\]
As the function $s\to e^{-\alpha as}$ is integrable on 
$[0,\infty)$, this yields the existence of the limit
\[\lim_{t\to \infty} \bigl(\int_0^t f(\gamma^\prime(s))ds
-\int_u^t f(\hat \eta^\prime(s))ds \bigr).\]
\end{proof}

Note that $q_\xi(x,y)=-q_\xi(y,x)$.

For $x\in M$ and two points $\xi\not=\eta\in \partial M$ let 
$\rho$ be the geodesic connecting $\xi$ to $\eta$.
Choose a point $y\in \rho$ and define
\[(\xi\mid \eta)_x= \frac{1}{2}
(q_\xi(x,y) + q_\eta(x,y)).\]
Thus $(\xi\mid \eta)_x=(\eta\mid \xi)_x$ for all $\xi,\eta$.
We have

\begin{lemma}\label{indep}
$(\xi\mid \eta)_x$ does not depend on the choice of $y\in \rho$.
\end{lemma}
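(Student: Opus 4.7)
The plan is to reduce the claim to a cocycle identity for $q_\xi$ and a symmetry identity along the axis $\rho$, the latter being driven by the fact that $f$ was built as a symmetrization of the mean curvature.

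First I would establish the cocycle property
\[
q_\zeta(x,y)+q_\zeta(y,z)=q_\zeta(x,z)\qquad\text{for all }x,y,z\in M,\ \zeta\in\partial M,
\]
which in particular reproves the antisymmetry $q_\zeta(x,y)=-q_\zeta(y,x)$ noted in the text. The argument follows the pattern of Lemma~\ref{converge}: pick geodesic rays $\gamma,\eta,\sigma$ from $x,y,z$ to $\zeta$ and reparameterize $\eta\mapsto\hat\eta$, $\sigma\mapsto\hat\sigma$ so that both are strongly asymptotic to $\gamma$, with $\hat\eta(u_1)=y$ and $\hat\sigma(u_2)=z$. Because strong asymptoticity is transitive, the reparameterization of $\sigma$ that is strongly asymptotic to $\hat\eta$ is just $\hat\sigma$ translated by $u_1$; substituting this into the definition of $q_\zeta(y,z)$, changing variables, and adding to $q_\zeta(x,y)$ collapses the sum into the defining integral for $q_\zeta(x,z)$.

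Next I would compute $q_\xi(y_1,y_2)$ and $q_\eta(y_1,y_2)$ directly. Parameterize $\rho:\mathbb{R}\to M$ so that $\rho(-\infty)=\xi$, $\rho(+\infty)=\eta$, and write $y_i=\rho(t_i)$. The forward rays from $y_1,y_2$ toward $\eta$ are already sub-rays of $\rho$, so the strongly asymptotic parameterizations are literally equal, and a direct computation yields
\[
q_\eta(y_1,y_2)=\int_{t_1}^{t_2} f(\rho'(s))\,ds.
\]
Running the identical computation with $\rho$ traversed in reverse, whose forward rays now point to $\xi$, gives
\[
q_\xi(y_1,y_2)=-\int_{t_1}^{t_2} f(-\rho'(s))\,ds.
\]
The crucial ingredient is that $f$ was defined precisely as the symmetrization $f(v)=\tfrac12(m(v)+m(-v))$, so $f(v)=f(-v)$ for every $v\in T^1M$; hence $q_\xi(y_1,y_2)+q_\eta(y_1,y_2)=0$.

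Combining the two steps, the cocycle identity applied to the triples $(x,y_2,y_1)$ yields
\[
q_\xi(x,y_1)-q_\xi(x,y_2)=q_\xi(y_2,y_1),\qquad q_\eta(x,y_1)-q_\eta(x,y_2)=q_\eta(y_2,y_1),
\]
and the sum of the two right-hand sides vanishes by the symmetry computation. Dividing by $2$ gives $(\xi\mid\eta)_x=\tfrac12(q_\xi(x,y_1)+q_\eta(x,y_1))=\tfrac12(q_\xi(x,y_2)+q_\eta(x,y_2))$, so the expression is independent of $y\in\rho$. The only technical obstacle is the careful bookkeeping of the parameter shifts in the cocycle identity; the symmetry step is then immediate from the very definition of $f$, which is the conceptual reason the cross-ratio-type quantity $(\xi\mid\eta)_x$ is well-defined.
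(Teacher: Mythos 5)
Your proposal is correct and follows essentially the same route as the paper: the paper's one-line proof also computes that moving $y$ along $\rho$ adds $\int_0^t f(\rho')\,ds$ to $q_\xi(x,y)$ and $-\int_0^t f(-\rho')\,ds$ to $q_\eta(x,y)$, and cancels these using $f(v)=f(-v)$. Your explicit cocycle identity for $q_\zeta$ just makes precise the parameter bookkeeping that the paper leaves implicit.
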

\begin{proof} Parameterize $\rho$ in such a way that $\rho(0)=y$ and 
$\rho(\infty)=\xi$. 
Replacing $y$ by $\rho(t)$ for some $t>0$ adds the 
integral $\int_0^t f(\rho^\prime(s))ds$ to $q_\xi(x,y)$ 
and adds the integral 
$-\int_0^t f(-\rho^\prime(s))ds$ to $q_\eta(x,y)$. Since $f$ is invariant 
under the flip $v\to -v$, the lemma follows. 
\end{proof}

As a consequence of Lemma \ref{indep},  $(\xi \mid \eta)_x$ only depends on $\xi,\eta,x$.
We use these Gromov type products to define a \emph{cross ratio} $[,,,]_x$ on $\partial M$ by 
\[ [\xi_1,\xi_2,\xi_3,\xi_4]_x=(\xi_1\mid \xi_3)_x +(\xi_2\mid \xi_4)_x-
(\xi_1\mid \xi_4)_x-(\xi_2\mid \xi_3)_x.\]
We have 

\begin{lemma}\label{independence}
$[,,,]_x$ does not depend on $x$.
\end{lemma}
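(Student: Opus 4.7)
The plan is to reduce the statement to a cocycle identity for the weighted Busemann-type quantity $q_\xi$, namely
\[q_\xi(x,z)=q_\xi(x,y)+q_\xi(y,z)\]
for all $x,y,z\in M$ and all $\xi\in\partial M$. Granting this for a moment, the independence follows quickly: by Lemma \ref{indep} I may use the same point $y$ on the geodesic from $\xi$ to $\eta$ when computing $(\xi\mid\eta)_x$ and $(\xi\mid\eta)_{x'}$, so the cocycle together with antisymmetry $q_\xi(x,y)=-q_\xi(y,x)$ yields
\[(\xi\mid\eta)_x-(\xi\mid\eta)_{x'}=\tfrac{1}{2}\bigl(q_\xi(x,x')+q_\eta(x,x')\bigr).\]
Plugging this into the definition of $[\xi_1,\xi_2,\xi_3,\xi_4]_x-[\xi_1,\xi_2,\xi_3,\xi_4]_{x'}$ produces eight terms of the form $\pm\tfrac{1}{2}q_{\xi_i}(x,x')$. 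Each index $i\in\{1,2,3,4\}$ contributes once with a plus sign and once with a minus sign, since $\xi_i$ appears in exactly two of the four pairings, once in a pairing entering the cross ratio with a $+$ and once in a pairing entering with a $-$. Hence the difference is zero.

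The heart of the argument is the cocycle identity. Fix $\xi$ and let $\gamma_x,\gamma_y,\gamma_z$ be the unit speed geodesic rays from $x,y,z$ to $\xi$, parameterized from the respective starting points. Let $u_1\in\mathbb{R}$ be the shift so that $s\mapsto\gamma_y(s-u_1)$ is strongly asymptotic to $\gamma_x$, and let $u_2$ be the shift making $\gamma_z(s-u_2)$ strongly asymptotic to $\gamma_y$. A direct triangle inequality then shows that $\gamma_z(s-u_1-u_2)$ is strongly asymptotic to $\gamma_x$, so the shift of $\gamma_z$ relative to $\gamma_x$ is $u_1+u_2$. Rewriting each of $q_\xi(x,y)$, $q_\xi(y,z)$, $q_\xi(x,z)$ via the substitution used in Lemma \ref{converge} and combining, the difference $q_\xi(x,y)+q_\xi(y,z)-q_\xi(x,z)$ collapses to
\[\lim_{t\to\infty}\int_{t-u_1}^{t}\bigl[f(\gamma_z'(s-u_2))-f(\gamma_y'(s))\bigr]\,ds.\]
Since $M$ has strictly negative curvature, strong asymptoticity of $\gamma_z(\cdot-u_2)$ and $\gamma_y(\cdot)$ upgrades to convergence of their unit tangent vectors in the Sasaki metric as $s\to\infty$; combined with H\"older continuity of $f$ (inherited from Proposition \ref{hoelder}), the integrand tends to $0$ uniformly on the interval of bounded length $u_1$, so the limit vanishes.

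The main obstacle is the cocycle step: one must track the three reparameterization shifts carefully and verify that the residual integral is negligible. This is the only place where anything nontrivial about the geometry enters, via Proposition \ref{hoelder} and the standard fact that strong asymptoticity in pinched negative curvature forces convergence of the derivatives in $T^1M$. Once the cocycle identity is in hand, Lemma \ref{indep} together with the routine algebraic cancellation described above completes the proof.
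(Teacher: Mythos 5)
Your proposal is correct and follows essentially the same route as the paper: the paper's proof simply asserts the change-of-basepoint formula $(\xi\mid\eta)_y=(\xi\mid\eta)_x+q_\xi(y,x)+q_\eta(y,x)$ (your version, with the factor $\tfrac12$, is the accurate one) and then performs exactly the sign-cancellation over the four boundary points that you describe. The only difference is that you additionally supply a proof of the cocycle identity $q_\xi(x,z)=q_\xi(x,y)+q_\xi(y,z)$ underlying that formula, via the additivity of the asymptotic shifts and the exponential convergence of strongly asymptotic geodesics together with H\"older continuity of $f$ --- a step the paper leaves implicit, and which you carry out correctly.
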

\begin{proof}
Let $y\in M$ be another point and let 
$\xi,\eta\in \partial M$. Then we have
\[(\xi\mid \eta)_y=(\xi \mid \eta)_x +
q_\xi(y,x)+ q_\eta(y,x).\]
This formula yields an expression for 
$[\xi_1,\xi_2,\xi_3,\xi_4]_y-[\xi_1,\xi_2,\xi_3,\xi_4]_x$
as a sum of terms $q_{\xi_i}(y,x)$ $(i=1,\dots,4)$.
Each of these terms 
appears twice in this expression, 
with opposite signs, and hence the terms cancel. 
\end{proof}

By construction, for any isometry $\phi$ of $M$, for 
any quadruple $(\xi_1,\xi_2,\xi_3,\xi_4)$ of distinct points in $\partial M$
and any $x\in M$ we have
\[[\phi(\xi_1),\phi(\xi_2),\phi(\xi_3),\phi(\xi_4)]_{\phi(x)}=
[\xi_1,\xi_2,\xi_3,\xi_4]_x\]
and therefore we obtain

\begin{corollary}\label{moebius}
The isometry group of $M$ preserves $[,,,]_x$.
\end{corollary}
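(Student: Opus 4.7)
The plan is to combine Lemma \ref{independence} with the isometry invariance of the Gromov-type product $(\xi\mid\eta)_x$, and this invariance in turn reduces to the invariance of the function $f$ on $T^1M$ under the natural action of $\mathrm{Iso}(M)$.

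First I would observe that every isometry $\phi$ of $M$ induces by its differential $d\phi$ a diffeomorphism of $T^1M$ which sends the geodesic flow to itself and maps horospheres to horospheres. Since the mean curvature $m(v)$ of the horosphere with outward normal $v$ is a metric invariant, we have $m(d\phi(v))=m(v)$ for all $v\in T^1M$, hence $f(d\phi(v))=f(v)$ as well. If $\gamma$ is the geodesic ray from $x$ to $\xi$, then $\phi\circ\gamma$ is the geodesic ray from $\phi(x)$ to $\phi(\xi)$, and the preferred strongly-asymptotic parametrization used in the definition of $q_\xi$ is preserved by $\phi$. A direct change of variables in the defining integral in Lemma \ref{converge} therefore gives
\[q_{\phi(\xi)}(\phi(x),\phi(y)) = q_\xi(x,y).\]
Averaging over the two endpoints $\xi,\eta$ of the connecting geodesic $\rho$ (note that $\phi(\rho)$ is the geodesic connecting $\phi(\xi)$ and $\phi(\eta)$, so the basepoint $\phi(y)$ on $\phi(\rho)$ is a legitimate choice for computing $(\phi(\xi)\mid\phi(\eta))_{\phi(x)}$) yields
\[(\phi(\xi)\mid\phi(\eta))_{\phi(x)} = (\xi\mid\eta)_x,\]
and taking the alternating sum over the four pairings in the definition of the cross ratio gives the equivariance formula
\[[\phi(\xi_1),\phi(\xi_2),\phi(\xi_3),\phi(\xi_4)]_{\phi(x)}=[\xi_1,\xi_2,\xi_3,\xi_4]_x\]
displayed just before the corollary.

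To conclude, I would invoke Lemma \ref{independence}: the cross ratio is independent of the basepoint $x\in M$, so we may write it simply as $[\xi_1,\xi_2,\xi_3,\xi_4]$. The displayed equivariance formula then reads
\[[\phi(\xi_1),\phi(\xi_2),\phi(\xi_3),\phi(\xi_4)]=[\xi_1,\xi_2,\xi_3,\xi_4],\]
which is precisely the assertion that $\mathrm{Iso}(M)$ preserves $[,,,]_x$.

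There is no real obstacle here; the only subtle point to check is that the identity $f\circ d\phi = f$ transfers cleanly to the integrals defining $q_\xi$, which is immediate from the fact that $\phi$ maps geodesics to geodesics isometrically and preserves the property of being strongly asymptotic. Everything else is bookkeeping.
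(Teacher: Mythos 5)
Your argument is exactly the one the paper intends: the equivariance $[\phi(\xi_1),\phi(\xi_2),\phi(\xi_3),\phi(\xi_4)]_{\phi(x)}=[\xi_1,\xi_2,\xi_3,\xi_4]_x$ follows "by construction" (which you correctly unwind as the isometry invariance of $m$, hence of $f$ and of $q_\xi$), and combining this with the basepoint independence of Lemma \ref{independence} gives the corollary. Correct, and essentially identical to the paper's proof.
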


Fix again a point $x\in M$. We have

\begin{lemma}\label{ultrametric}
There exists a number $\kappa >0$ such that the 
product $( \mid)_x$ satisfies the $\kappa$-ultrametric 
inequality 
\[(\xi\mid \eta)_x\geq \min\{(\xi \mid\zeta)_x,(\zeta\mid \eta)_x\}-\kappa.\]
\end{lemma}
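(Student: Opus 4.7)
The plan is to prove the $\kappa$-ultrametric inequality by comparing $(\xi\mid\eta)_x$ to a geometric quantity on $M$ and then invoking the Gromov hyperbolicity of the CAT$(-a^2)$ manifold $M$.

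First I reduce to a geometric quantity. By Lemma~\ref{indep}, I may choose $y = p_{\xi\eta}$ to be the foot of the perpendicular from $x$ onto the geodesic $\rho_{\xi\eta}$ joining $\xi$ to $\eta$; let $\sigma\colon [0, T] \to M$ be the unit-speed geodesic from $x$ to $p_{\xi\eta}$, with $T = d(x, p_{\xi\eta})$. The main technical step is to show
\[
(\xi\mid\eta)_x \;=\; \int_0^T f(\sigma'(s))\, ds \;+\; O(1),
\]
with an additive error uniform in $\xi,\eta,x$. To see this, write
\[
q_\xi(x, p_{\xi\eta}) = \int_0^u f(\gamma'(s))\,ds + \int_u^\infty\bigl(f(\gamma'(s)) - f(\hat\eta'(s))\bigr)\, ds,
\]
where $\gamma$ is the geodesic ray from $x$ to $\xi$, $\hat\eta$ is the strongly asymptotic parameterization of the $\xi$-ray from $p_{\xi\eta}$, and $u$ is the parameter with $\hat\eta(u) = p_{\xi\eta}$. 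The tail integral converges to an $O(1)$ quantity: in CAT$(-a^2)$, one has $d(\gamma(s), \hat\eta(s)) \leq C_0 e^{-as}$ for $s \geq u$ with $C_0$ uniformly bounded (since $p_{\xi\eta}$ being the foot of the perpendicular forces $d(\gamma(u), p_{\xi\eta})$ to be uniformly bounded), so H\"older continuity of $f$ via Proposition~\ref{hoelder} gives a uniform bound for the tail. The leading term $\int_0^u f(\gamma'(s))\,ds$ differs from $\int_0^T f(\sigma'(s))\,ds$ by $O(1)$ because $\gamma$ and $\sigma$ both issue from $x$ with endpoints at bounded distance, and the two segments fellow-travel. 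Averaging the analogous estimate for $q_\eta$ yields the displayed comparison.

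Second, I invoke Gromov hyperbolicity. The manifold $M$ is CAT$(-a^2)$, hence Gromov $\delta_0$-hyperbolic for some $\delta_0 = \delta_0(a)$. For any three distinct $\xi, \eta, \zeta \in \partial M$ the sides of the ideal triangle are mutually $\delta_0$-close in the central region. Writing $T_{\xi\eta} = d(x, p_{\xi\eta})$, etc., standard arguments in hyperbolic geometry imply that, after relabelling, the two smallest of $T_{\xi\eta}, T_{\xi\zeta}, T_{\eta\zeta}$ agree up to $O(\delta_0)$, while the geodesics from $x$ realizing them fellow-travel within $O(\delta_0)$ on their common interval. Since $f$ takes values in a positive interval $[c,d]$, the two smallest of the integrals $\int_\sigma f$ then also agree up to $O(1)$, while the third is at least their minimum minus $O(1)$. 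Combined with the first step, this gives
\[
(\xi\mid\eta)_x \;\geq\; \min\{(\xi\mid\zeta)_x, (\zeta\mid\eta)_x\} - \kappa
\]
for a universal $\kappa$ depending only on $a,b$, the H\"older data of $f$, and $\delta_0$.

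The main obstacle is the uniform comparison $(\xi\mid\eta)_x = \int_\sigma f + O(1)$: while $\gamma$ (the geodesic from $x$ to $\xi$) differs from $\sigma$ (the perpendicular from $x$ to $p_{\xi\eta}$), their $f$-integrals over appropriately matched parameters must be shown to agree up to a bounded error independent of $T$. The controlling fact, specific to CAT$(-a^2)$ geometry, is that when $p$ is the foot of the perpendicular from $x$ to a geodesic through $\xi$, the segment $\gamma|_{[0,u]}$ stays within uniformly bounded distance of $\sigma$ before $\gamma$ "bends" at $p_{\xi\eta}$ to align with $\rho_{\xi\eta}$. Together with exponential fellow-traveling past the bend and the H\"older regularity of $f$, this yields the required uniform bound.
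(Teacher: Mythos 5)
Your proof is correct and follows essentially the same route as the paper's: reduce $(\xi\mid\eta)_x$ up to a uniform additive error to $\int f$ along the perpendicular from $x$ to the geodesic $[\xi\eta]$, then exploit $\delta_0$-thinness of the ideal triangle together with H\"older continuity, positivity, and boundedness of $f$. The only cosmetic difference is that you phrase the conclusion symmetrically (the two smallest of the three quantities agree up to $O(1)$), whereas the paper directly bounds $(\xi\mid\eta)_x-(\xi\mid\zeta)_x$ from below via the right-angled triangle $x,z,y'$; both hinge on the same exponential fellow-traveling of geodesics issuing from $x$ with nearby endpoints, which you correctly identify as the controlling fact.
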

\begin{proof}
The proof is standard and we only give a sketch. Namely, consider pairwise
distinct points $\xi,\zeta,\eta\in \partial M$. These points define 
a geodesic triangle which is $\delta_0$-thin for some $\delta_0>0$ 
(a side is contained in the $\delta_0$-neighborhood of the union of the other sides) 
since $M$ is hyperbolic 
in the sense of Gromov. 

As the function $f$ is uniformly H\"older continuous and bounded from above
and below by uniform positive constants, we know that there exists 
a number $m>0$ with the following property. 
Let $y$ be the shortest distance projection of $x$ into the geodesic $\theta$ 
connecting $\xi$ to $\eta$ and let $\rho:[0,T]\to M$ be the geodesic connecting
$x$ to $y$; then 
\[\vert (\xi\mid \eta)_x-\int_0^Tf(\rho^\prime(s))ds\vert \leq m.\]
Namely, for this choice of a point $y\in \theta$, we have
$\vert q_\xi(x,y)-\int_0^T f(\rho^\prime(s))ds\vert \leq m$
for a universal constant $m>0$, and the same estimate also holds
true for $q_\eta(x,y)$.

On the other hand, we also know that $y$ is contained in the $\delta_0$-neighborhood 
of one of the geodesics connecting $\xi$ to $\zeta$ or connecting
$\eta$ to $\zeta$, say the geodesic $\gamma$ connecting $\xi$ to $\zeta$. 
Let $y^\prime\in \gamma$ be such that $d(y,y^\prime)\leq \delta_0$.
Let $z$ be the shortest distance projection of $x$ into $\gamma$.  
Assume without loss of generality that $y^\prime$ is contained in the 
subray of $\gamma$ connecting $z$ to $\xi$. 

Consider the geodesic triangle $T$ with vertices $x,z,y^\prime$. It has a right angle at 
$z$. Let $\rho_1:[0,T_1]\to M,\rho_2:[0,T_2]\to M$ be the sides of $T$ 
with vertex $x$ and second vertex
$z,y^\prime$, respectively, 
Using once more H\"older continuity of $f$ and the fact that $f$ is bounded from
above and below by a universal positive constant, we conclude that 
\[\int_0^{T_1}f(\rho_1^\prime(s))ds \leq \int_0^{T_2}f(\rho_2^\prime(s))ds +\ell\]
for a universal constant $\ell>0$. But this means
\[(\xi\mid\eta)_x-(\xi\mid \zeta)_x\geq -2m-\ell \]
which is what we wanted to show.
\end{proof}

A \emph{quasimetric} on a space $X$ is a symmetric function 
$q:X\times X\to [0,\infty)$ which vanishes only for $x=y$ and satisfies 
for some $K>0$
\[q(x,y)\leq K(q(x,z)+q(z,y)) \text{ for all }x,y,z.\]
As a consequence of Lemma \ref{ultrametric}, if for $\epsilon >0$, 
we define  
\[\delta_{x,\epsilon}(\xi,\eta)=e^{-\epsilon (\xi \mid \eta)_x}\] then we have

\begin{lemma}\label{quasimetric}
$\delta_{x,\epsilon}$ is a quasimetric on $\partial M$.
\end{lemma}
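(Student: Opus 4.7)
The plan is to derive the quasimetric property directly from the $\kappa$-ultrametric inequality of Lemma \ref{ultrametric}, using monotonicity of $t \mapsto e^{-\epsilon t}$. Symmetry of $\delta_{x,\epsilon}$ is immediate from the symmetry of $(\xi \mid \eta)_x$, which we already noted in the definition. So the two substantive points are (a) that $\delta_{x,\epsilon}$ strictly separates distinct points (and vanishes on the diagonal by convention), and (b) the coarse triangle inequality with multiplicative defect.

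For the triangle inequality, I would start from
\[
(\xi \mid \eta)_x \geq \min\{(\xi \mid \zeta)_x, (\zeta \mid \eta)_x\} - \kappa,
\]
multiply by $-\epsilon$ (reversing the inequality and turning $\min$ into $\max$), and exponentiate to obtain
\[
\delta_{x,\epsilon}(\xi,\eta) \leq e^{\epsilon \kappa}\max\{\delta_{x,\epsilon}(\xi,\zeta),\delta_{x,\epsilon}(\zeta,\eta)\}\leq e^{\epsilon \kappa}\bigl(\delta_{x,\epsilon}(\xi,\zeta)+\delta_{x,\epsilon}(\zeta,\eta)\bigr),
\]
so the quasimetric constant is $K = e^{\epsilon \kappa}$.

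For the separation property, I need that $(\xi \mid \eta)_x$ is finite whenever $\xi \neq \eta$, so that $\delta_{x,\epsilon}(\xi,\eta) > 0$, while $(\xi \mid \xi)_x = +\infty$ (with the convention $\delta_{x,\epsilon}(\xi,\xi) = 0$). Finiteness for $\xi \neq \eta$ follows from the estimate used in the proof of Lemma \ref{ultrametric}: if $y$ is the shortest-distance projection of $x$ onto the geodesic joining $\xi$ to $\eta$ and $\rho : [0,T] \to M$ is the geodesic from $x$ to $y$, then $(\xi \mid \eta)_x$ is comparable, up to the universal constant $m$, to $\int_0^T f(\rho'(s))\, ds$, which is bounded above by $(\max f)\cdot T < \infty$. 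Divergence when $\xi = \eta$ follows in the same way: as we slide the foot of the perpendicular out to infinity along a ray toward $\xi$, the corresponding $T$ tends to $\infty$ and, since $f$ is bounded below by a positive constant, the integral diverges.

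The only mildly delicate point is the separation estimate for distinct $\xi, \eta$; everything else is a short and formal consequence of Lemma \ref{ultrametric}. I expect no real obstacle, since the needed bounds on $(\xi \mid \eta)_x$ were essentially established in the course of proving Lemma \ref{ultrametric}.
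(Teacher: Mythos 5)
Your argument is correct and is exactly the derivation the paper intends: the paper states the lemma as an immediate consequence of Lemma \ref{ultrametric} without written proof, and your exponentiation of the $\kappa$-ultrametric inequality (giving $K=e^{\epsilon\kappa}$) together with the finiteness of $(\xi\mid\eta)_x$ for $\xi\neq\eta$, extracted from the estimate $\vert(\xi\mid\eta)_x-\int_0^T f(\rho'(s))\,ds\vert\leq m$ in the proof of Lemma \ref{ultrametric} and the boundedness of $f$, supplies precisely the omitted details. No gaps.
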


Quasimetrics with multiplicative constant sufficiently close to $1$ 
are known to be bi-Lipschitz equivalent
to metrics and hence we have

\begin{lemma}\label{bilipmetric}
For sufficiently small $\epsilon>0$ the function $\delta_{x,\epsilon}$
is bi-Lipschitz equivalent to a metric $d_x$.
\end{lemma}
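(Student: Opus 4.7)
The plan is to convert the $\kappa$-ultrametric inequality from Lemma \ref{ultrametric} into a quasi-triangle inequality for $\delta_{x,\epsilon}$ whose multiplicative constant tends to $1$ as $\epsilon\to 0$, and then produce the metric $d_x$ by the standard chain construction (Frink's lemma). First, applying $e^{-\epsilon\,\cdot\,}$ to the ultrametric inequality yields
\[\delta_{x,\epsilon}(\xi,\eta)\leq e^{\epsilon\kappa}\max\{\delta_{x,\epsilon}(\xi,\zeta),\delta_{x,\epsilon}(\zeta,\eta)\}\leq e^{\epsilon\kappa}\bigl(\delta_{x,\epsilon}(\xi,\zeta)+\delta_{x,\epsilon}(\zeta,\eta)\bigr),\]
so $\delta_{x,\epsilon}$ is a quasimetric with constant $K=e^{\epsilon\kappa}$, which can be made as close to $1$ as desired by taking $\epsilon$ small.

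Next I would define the candidate metric by the chain infimum
\[d_x(\xi,\eta)=\inf\Bigl\{\sum_{i=0}^{n-1}\delta_{x,\epsilon}(\xi_i,\xi_{i+1}) : n\geq 1,\ \xi_0=\xi,\ \xi_n=\eta\Bigr\}.\]
Symmetry, non-negativity and the triangle inequality for $d_x$ are immediate from the definition, and the trivial one-step chain gives $d_x(\xi,\eta)\leq \delta_{x,\epsilon}(\xi,\eta)$. It remains to prove a reverse estimate $\delta_{x,\epsilon}\leq C\,d_x$, which then simultaneously shows that $d_x$ is non-degenerate, hence a genuine metric, and that $d_x$ is bi-Lipschitz equivalent to $\delta_{x,\epsilon}$.

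The reverse estimate is the heart of the proof and is established by the classical Frink induction on chain length. Given a chain $\xi_0,\dots,\xi_n$ of total weight $S=\sum_{i}\delta_{x,\epsilon}(\xi_i,\xi_{i+1})$, choose the smallest index $k$ for which the initial partial sum first exceeds $S/2$; apply the quasi-triangle inequality to combine $\delta_{x,\epsilon}(\xi_0,\xi_k)$ with $\delta_{x,\epsilon}(\xi_k,\xi_n)$, each of which is controlled by the induction hypothesis applied to a sub-chain of weight at most $S/2$ up to a controlled error. For $K$ sufficiently close to $1$ the induction closes with a uniform constant of the order $2K$, producing the bound $\delta_{x,\epsilon}(\xi,\eta)\leq 2K\,d_x(\xi,\eta)$.

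The main obstacle is precisely this Frink-type induction: the chain-splitting step only produces a uniform constant because $K$ is close to $1$, which is exactly the reason the statement of the lemma restricts to sufficiently small $\epsilon$. Once this bookkeeping is done, bi-Lipschitz equivalence of $d_x$ and $\delta_{x,\epsilon}$ follows immediately and the lemma is proved.
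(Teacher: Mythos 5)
Your argument is correct and is exactly the standard fact the paper invokes: the text gives no proof of this lemma, simply citing the well-known metrization result for quasimetrics whose multiplicative constant is sufficiently close to $1$, and your chain construction with the Frink-type induction is precisely the classical proof behind that citation. The only thing to tidy in a final write-up is the bookkeeping in the splitting step (the sub-chains on either side of the chosen index each have weight at most $S/2$, with the single middle edge handled separately), but this is the standard argument and closes as you describe once $e^{\epsilon\kappa}$ is close enough to $1$.
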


Let now $\lambda_x$ be the measure on $\partial M$ which is the image of the
Lebesgue measure on the round sphere $T_xM$ under the natural 
homeomorphism $T_x^1M\to \partial M$. The following is the main result
of this section.

\begin{proposition}\label{regular}
The metric measure space $(\partial M,d_x,\lambda_x)$ is Ahlfors regular of 
dimension $1/\epsilon$.
\end{proposition}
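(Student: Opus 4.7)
Since by Lemma~\ref{bilipmetric} the metric $d_x$ is bi-Lipschitz equivalent to the quasimetric $\delta_{x,\epsilon}(\xi,\eta)=e^{-\epsilon(\xi\mid\eta)_x}$, it suffices to verify
\[\lambda_x\bigl(\{\eta\in\partial M:(\xi\mid\eta)_x>R\}\bigr)\asymp e^{-R}\]
uniformly in $\xi\in\partial M$ for large $R>0$, taking $R=-\frac{1}{\epsilon}\log r$. The regime of bounded $R$ is trivial from compactness of $\partial M$ and positivity of $\lambda_x$ on open sets, so I only consider $R$ large. The plan is to rewrite the Gromov product $(\xi\mid\eta)_x$ as the log-Jacobian of the exponential map at $x$ up to a uniform additive error, and then to estimate the Lebesgue measure of the corresponding set via a Sullivan-type shadow argument on the round sphere $T_x^1M$.

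For the first step, fix $\xi\neq\eta$, let $\theta$ be the geodesic joining them, let $y\in\theta$ be the nearest-point projection of $x$, and set $T=d(x,y)$; write $\rho:[0,T]\to M$ for the geodesic from $x$ to $y$ and $\gamma=\gamma_{v_\xi}$. A repetition of the estimate in the proof of Lemma~\ref{ultrametric} gives $(\xi\mid\eta)_x=\int_0^T f(\rho'(s))\,ds+O(1)$, and Lemma~\ref{symmetry} together with the definition $f(v)=\tfrac12(m(v)+m(-v))$ lets me replace $f$ by $m$ at the cost of another $O(1)$. Since $\rho$ and $\gamma$ share the endpoint $x$ and their tangents converge exponentially fast along $[0,T]$ by the upper curvature bound (the two geodesics being nearly asymptotic for large $T$), H\"older continuity of $m$ from Proposition~\ref{hoelder} yields $(\xi\mid\eta)_x=\int_0^T m(\gamma'(s))\,ds+O(1)$. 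On the other hand, Riccati's equation for the shape operator $S(t)$ of the distance sphere $S(x,t)$ at $\gamma(t)$ gives $\frac{d}{dt}\log J(t)=\operatorname{tr} S(t)$, where $J(t)=\det d(\exp_x)_{tv_\xi}|_{v_\xi^\perp}$. Under the bounded curvature hypothesis and the appendix regularity, $S(t)$ converges exponentially as $t\to\infty$ to the shape operator of the horosphere through $\gamma(t)$ centered at $\gamma(-\infty)$, whose trace is $m(\gamma'(t))$; integrating then gives $\log J(T)=\int_0^T m(\gamma'(s))\,ds+O(1)$, so combining with the above,
\[(\xi\mid\eta)_x=\log J(T)+O(1).\]

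I would complete the argument by a shadow estimate. Define $T_R$ by $\log J(T_R)=R$. By $\delta$-hyperbolicity of $M$ and the uniform curvature bounds, there are universal constants $0<C_0<C_1$ such that the set $\{\eta:(\xi\mid\eta)_x>R\}\subset\partial M$ contains the $x$-shadow of $B(\gamma(T_R),C_0)$ and is contained in the $x$-shadow of $B(\gamma(T_R),C_1)$. The map $v\mapsto\gamma_v(T_R)$ is a diffeomorphism $T_x^1M\to S(x,T_R)$ with transverse Jacobian $J(T_R)$, so pulling back the $(n-1)$-volume $\asymp C^{n-1}$ of the slice $B(\gamma(T_R),C)\cap S(x,T_R)$ through this map gives Lebesgue measure $\asymp C^{n-1}/J(T_R)$ on $T_x^1M$, and this is the $\lambda_x$-measure of the corresponding shadow. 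Both the upper and lower bounds are then $\asymp J(T_R)^{-1}=e^{-R}=r^{1/\epsilon}$, which gives the Ahlfors-regularity estimate with dimension $1/\epsilon$. The main obstacle is the uniform exponential convergence of the distance-sphere shape operator $S(t)$ to the horospheric one; this is exactly where the appendix and the hypothesis $|\nabla R|<\infty$ enter essentially, since without a regularity bound on the curvature tensor one cannot control the Riccati flow quantitatively. Once that is in place, the shadow-sandwiching and change-of-variables steps are standard in negatively curved geometry. The $(n-1)$-power that would otherwise appear in the visual Hausdorff dimension disappears precisely because the $f$-weighting in the Gromov product absorbs the mean curvature, leaving the exponent $1/\epsilon$ independent of the manifold dimension $n$.
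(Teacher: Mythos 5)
Your proposal is correct and follows essentially the same route as the paper: reduce to estimating $\lambda_x\{\eta:(\xi\mid\eta)_x>R\}$, sandwich this set between shadows of uniformly sized balls centered at $\gamma_\xi(T_R)$ where the integrated mean curvature along $\gamma_\xi$ equals $R$ (using the proof of Lemma \ref{ultrametric}, H\"older continuity of $m$ and $f$, and Lemma \ref{symmetry} to pass between $f$ and $m$), and then compute the shadow's measure as the Jacobian of the radial map via the Riccati comparison of distance-sphere and horosphere shape operators (Lemma \ref{hoelderclose}). The only cosmetic difference is that you normalize the Jacobian at the origin via $\exp_x$ while the paper normalizes at radius $10$ via the projection $S(x,R)\to S(x,10)$, which agree up to uniform multiplicative constants.
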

\begin{proof}
Since the diameter of $d_x$ is finite, the total mass of the measure $\lambda_x$ is 
finite, and furthermore $d_x$ is bi-Lipschitz equivalent to $\delta_x=\delta_{x,\epsilon}$,  
it suffices to find numbers $C>0,r_0>0$ such that 
\begin{equation}\label{massestimate}
\lambda_x(B(\xi,r))\in [Cr,C^{-1}r]\end{equation}
for 
all $\xi\in \partial M$ and all $r\leq r_0$, 
where $B(\xi,r)=\{\eta\mid e^{-(\xi\mid \eta)_x}<r\}$.

Let $\exp$ be the exponential map of the Riemannian manifold $M$. 
Using the uniform curvature bound, the map $v\to \exp{ (10v)}$ maps 
the unit sphere $T_x^1M$ onto the distance sphere $S(x,10)$ of radius
$10$ about $x$, and its Jacobian for the standard metric on $T_x^1M$ and 
the metric on $S(x,10)$ induced from the metric on $M$ 
is a smooth function $h$ with values in 
$[\kappa_1,\kappa_2]$ for constants $0<\kappa_1 <\kappa_2$ 
not depending on $x$. Thus it suffices to show the 
estimate (\ref{massestimate}) for the 
measure $h\lambda_x$.

Let $r_0>0$ be sufficiently small that 
$\int_0^{10} m(\gamma^\prime(t))dt \leq -\log r_0$
for every geodesic $\gamma$ in $M$.
We next  claim that for $r\leq r_0$
and $\xi\in \partial M$ the
ball $B(\xi, r)$ 
can be understood as follows.

Let $\gamma_\xi:[0,\infty)\to M$ 
be the geodesic ray connecting $x=\gamma_\xi(0)$ to $\xi$.
Let $R>10$ be such that 
\[\int_0^R m(\gamma^\prime(t))dt =-\log r.\]
As the function $m$ is positive and bounded from below by a positive number, such 
a number $R>10$ exists and is unique.

For $q>0$ consider the ball \[B^S(\gamma_\xi(R),q)\subset S(x,R)\] 
of radius $q$ about 
$\gamma_\xi(R)$ in the distance sphere $S(x,R)$, equipped with the 
intrinsic path metric. Let moreover $A(\xi,R,q)\subset \partial M$
be the set of all endpoints of geodesic rays starting at $x$
which cross through $B^S(\gamma_\xi(R),q)$.
We claim that there exist numbers 
$0< q<u$ not depending on $\xi$ and $r$ 
such that 
$A(\xi,R,q)\subset B(\xi,r)\subset A(\xi,R,u)$.

Before we prove the claim we 
show that it implies the proposition. Namely, 
using the assumption that $R\geq 10$, comparison shows that the volumes 
of the balls $B^S(\gamma_\xi(R),q),B^S(\gamma_\xi(R),u)$ for the induced metric on 
$S(x,R)$ are contained in the interval $[\chi,\chi^{-1}]$ for a universal constant
$\chi >0$. 
Thus to establish the desired lower and upper bound for the $h\lambda_x$-volume of 
$B(\xi,r)$, it suffices to show that there exists a number $C>0$ such that 
for each $y\in B^S(\gamma_\xi(R),u)$ the Jacobian of the 
radial projection $\pi_{R,10}:S(x,R)\to S(x,10)$ at $y$ 
is contained in $[C^{-1}r,Cr]$.

The negative of the logarithm of the Jacobian of the map $\pi_{R,10}$ 
at $y=\gamma_\eta(R)$ for some $\eta\in \partial M$ 
can be computed as an integral
\[\int_{10}^R m^S_t(\gamma_\eta^\prime(t))dt\]
where 
$m^S_t(\gamma_\eta^\prime(t))$ is the mean curvature of the distance sphere
$S(x,t)$ at the point $\gamma_\eta(t)$.  

By Lemma \ref{hoelderclose}, we have
\[\vert m_t^S(\gamma_\eta^\prime(t))-m(\gamma_\eta^\prime(t))\vert \leq e^{-\alpha t}\]
for a universal constant $\alpha >0$ and therefore 
\[\vert \int_{10}^R m_t^S(\gamma_\eta^\prime(t))dt -\int_{10}^R m(\gamma_\eta^\prime(t)) dt \vert \leq 
C_0\]
where $C_0>0$ is a universal constant.
Thus we are left with showing that for each $\eta\in \partial M$ with
$\gamma_\eta(R)\in B^S(\gamma_\xi(R),u)$ we have 
\begin{equation}\label{comparison5} 
\vert \int_{10}^R m(\gamma_\eta^\prime(t))dt -\int_{10}^R 
m(\gamma_\xi^\prime(t))dt \vert \leq C_1\end{equation}
for a universal constant $C_1>0$.

However, comparison shows that for such a point $\eta$ we have
\[d(\gamma_\eta^\prime(t),\gamma_\xi^\prime(t))\leq  C_2e^{a(t-R)} \text{ for all }
10\leq t\leq R\]
for a universal constant $C_2>0$ and consequently the 
estimate (\ref{comparison5}) follows once more 
from H\"older continuity of $m$.

It remains to show the inclusion $A(\xi,R,q)\subset B(\xi,r)\subset A(\xi,R,u)$ for universal constants
$0<q<u$. 
To this end recall from the proof of Lemma \ref{ultrametric} that there exists a number
$z>0$ with the following property. Let $\eta\in \partial M$ and let 
$\gamma_\eta$ be the geodesic ray connecting $x$ to $\eta$. 
Let $\rho$ be the geodesic connecting $\xi$ to $\eta$ and let 
$y$ be the shortest distance projection of $x$ into $\rho$.
Let $\zeta:[0,T]\to M$ be the geodesic connecting $x$ to $y$
and let $\ell=\int_0^T f(\zeta^\prime(t)) dt$. If 
$\ell > -\log R+ z$ then $\eta\in B(\xi,r)$, and if 
$\ell< -\log R-z$ then $\eta\not\in B(\xi,r)$.

The containments then follow from H\"older continuity of the function $f$ and 
Lemma \ref{symmetry}. 
\end{proof}

\begin{corollary}\label{absolute}
For $x,y\in M$, the measures $\lambda_x,\lambda_y$ on 
$\partial M$ are absolutely continuous, with uniformly bounded 
Radon Nikodym derivative.
\end{corollary}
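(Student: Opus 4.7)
The plan is to reduce the absolute continuity claim to a bi-Lipschitz equivalence between the metrics $d_x$ and $d_y$ on $\partial M$. Once this is established, Proposition \ref{regular} furnishes that both $(\partial M, d_x, \lambda_x)$ and $(\partial M, d_y, \lambda_y)$ are Ahlfors regular of the same dimension $1/\epsilon$; comparing the measures of $d_x$-balls around every point (which are also $d_y$-balls up to bounded factors) and invoking the Lebesgue differentiation theorem in Ahlfors regular metric measure spaces then yields a Radon--Nikodym derivative $d\lambda_y/d\lambda_x$ bounded above and below by positive constants depending on $x,y$.

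By Lemma \ref{bilipmetric}, the bi-Lipschitz equivalence of $d_x$ and $d_y$ follows from that of the quasimetrics $\delta_{x,\epsilon}$ and $\delta_{y,\epsilon}$, which in turn reduces to the uniform estimate
\[\bigl|(\xi \mid \eta)_x - (\xi \mid \eta)_y\bigr| \leq K(x,y)\quad \text{for all } \xi,\eta \in \partial M.\]
The formula derived inside the proof of Lemma \ref{independence} expresses $(\xi \mid \eta)_y - (\xi \mid \eta)_x$ as the fixed linear combination $\tfrac{1}{2}(q_\xi(y,x)+q_\eta(y,x))$, so it suffices to bound $|q_\xi(x,y)|$ uniformly in $\xi$ for each fixed pair $x,y \in M$.

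To bound $q_\xi(x,y)$ I would split it as in the proof of Lemma \ref{converge} into $\int_0^u f(\hat\eta'(s))\,ds$ together with the convergent integral $\lim_{t \to \infty}\int_0^t \bigl(f(\gamma'(s))-f(\hat\eta'(s))\bigr)\,ds$. The shift $u$ is precisely the Busemann-function difference $b_\xi(x)-b_\xi(y)$, so $|u|\le d(x,y)$, and since $f$ is bounded the first summand is at most $\|f\|_\infty d(x,y)$. For the second summand, standard exponential convergence of strongly asymptotic geodesics in a Hadamard manifold with $K\le -a^2$ yields $d(\gamma(s),\hat\eta(s))\le C_0 e^{-as}$ with $C_0$ depending only on $d(x,y)$, and via a Jacobi field comparison the same exponential rate controls the Sasaki distance $d(\gamma'(s),\hat\eta'(s))$. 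Combined with H\"older continuity of $f$ (inherited from Proposition \ref{hoelder}), the integrand decays like $e^{-\alpha a s}$ and is absolutely integrable, bounded in terms of $d(x,y)$ and the curvature bounds alone.

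The main technical point to watch is the passage from exponential decay of the basepoint distance $d(\gamma(s),\hat\eta(s))$ to exponential decay of the Sasaki distance $d(\gamma'(s),\hat\eta'(s))$, at a rate uniform in $\xi$; this is the standard Jacobi field argument in bounded negative curvature, but it must be made quantitative with constants depending only on $d(x,y)$. Once the uniform bound on $|q_\xi(x,y)|$ is in hand, the chain bi-Lipschitz equivalence of $\delta_{x,\epsilon},\delta_{y,\epsilon}$ $\Rightarrow$ bi-Lipschitz equivalence of $d_x,d_y$ $\Rightarrow$ comparability of ball volumes for two Ahlfors regular measures of the same dimension $\Rightarrow$ bounded Radon--Nikodym derivative completes the proof.
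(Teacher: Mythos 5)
Your proposal is correct and follows the same route as the paper: the paper's proof simply asserts that $d_x$ and $d_y$ are bi-Lipschitz equivalent "by construction" and that absolute continuity with bounded density is then immediate from Ahlfors regularity of both spaces at the common dimension $1/\epsilon$ (Proposition \ref{regular}). Your argument fills in exactly the details the paper leaves implicit — the uniform bound on $|q_\xi(x,y)|$ via the quantitative version of Lemma \ref{converge}, and the ball-volume comparison plus Lebesgue differentiation step — so there is nothing to correct.
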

\begin{proof}
By construction, for $x\not=y$ the metrics $d_x,d_y$ on 
$\partial M$ are bi-Lipschitz equivalent. Since 
by Proposition \ref{regular} the
metric measure spaces $(\partial M,d_x,\lambda_x)$ 
and $(\partial M,d_y,\lambda_y)$ are Ahlfors regular, of 
dimension $1/\epsilon$, absolute continuity of the measures 
$\lambda_x,\lambda_y$ is immediate.
\end{proof}

\section{Groups of isometries of Hadamard manifolds}\label{groupsof}

The goal of this section is the proof of the theorem 
from the introduction.  We begin with a group $\Gamma$ which admits a non-elementary
isometric action on a Hadamard manifold $M$ of bounded negative curvature 
and such that the covariant derivative of the curvature  tensor of $M$ is uniformly bounded.
We then say that the curvature tensor of $M$ is bounded of first order.

Let $\epsilon >0$ be sufficiently small that the quasimetric
$\delta=\delta_{x,\epsilon}$ is bi-Lipschitz equivalent to a metric $d$. 
Such a constant exists by Lemma \ref{bilipmetric}.
For a fixed point
$x\in M$ we then obtain a multiplicative cross ratio on $\partial M$
by defining 
\[{\rm Cr}(\xi_1,\xi_2,\xi_3,\xi_4)=\frac{\delta(\xi_1,\xi_3)
  \delta(\xi_2,\xi_4)}{\delta(\xi_1,\xi_4)\delta(\xi_2,\xi_3)}.\]
By Lemma \ref{independence}, the cross
ratio ${\rm Cr}$ does not depend on $x$ and is invariant under
the action of the isometry group ${\rm Iso}(M)$ of $M$.
The following is an analog of Lemma 6 of \cite{Ni13}.

\begin{lemma}\label{derivative}
  For every $\phi\in {\rm Iso}(M)$ there exists a positive
  continuous function $\vert \phi^\prime\vert$ on $\partial M$
  with the property that for all $\xi,\eta\in \partial M$, we have
  \[\delta^2(\phi \xi, \phi \eta)=\vert \phi^\prime\vert(\xi)
 \vert \phi^\prime\vert(\eta)\delta^2(\xi,\eta).\]   
\end{lemma}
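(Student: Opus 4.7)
The plan is to read off $|\phi'|$ directly from how the Gromov-type product $(\cdot\mid\cdot)_x$ transforms under the isometry $\phi$. First I would record that since $\phi$ preserves the Riemannian metric, it sends Busemann functions to Busemann functions and horospheres to horospheres, hence preserves the mean curvature function $m$ and its symmetrization $f$. A direct consequence is the isometry invariance of both $q_\bullet(\cdot,\cdot)$ and the Gromov product, so in particular $(\phi\xi\mid \phi\eta)_{\phi x} = (\xi\mid\eta)_x$.

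Next I would apply the change-of-basepoint identity derived inside the proof of Lemma~\ref{independence}, namely
\[
(\xi\mid\eta)_y = (\xi\mid\eta)_x + q_\xi(y,x) + q_\eta(y,x),
\]
taking $y=\phi x$ and applying it to the pair $\phi\xi,\phi\eta$. Combined with the antisymmetry $q_\zeta(y,x) = -q_\zeta(x,y)$ noted after Lemma~\ref{converge} and the isometry invariance above, this rearranges to
\[
(\phi\xi\mid\phi\eta)_x = (\xi\mid\eta)_x + q_{\phi\xi}(x,\phi x) + q_{\phi\eta}(x,\phi x).
\]
Multiplying by $-2\epsilon$ and exponentiating turns this into the multiplicative identity of the lemma with
\[
|\phi'|(\xi) := \exp\bigl(-2\epsilon\, q_{\phi\xi}(x,\phi x)\bigr).
\]
Positivity is then immediate.

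The only point requiring genuine argument is continuity of the function $\zeta\mapsto q_\zeta(x,\phi x)$ on $\partial M$. Given $\zeta_n\to \zeta$, let $\gamma_n,\gamma$ be the geodesic rays from $x$ to $\zeta_n,\zeta$ and $\hat\eta_n,\hat\eta$ the strongly asymptotic parameterizations of the rays from $\phi x$ to the same endpoints. The exponential decay estimate $d(\gamma_n'(s),\hat\eta_n'(s))\leq C_0 e^{-as}$ used in the proof of Lemma~\ref{converge} has $C_0$ depending only on the common basepoints $x,\phi x$, so it holds uniformly in $n$. Together with H\"older continuity of $f$ (Proposition~\ref{hoelder}) this supplies a uniform integrable dominating function for the integrands defining $q_{\zeta_n}(x,\phi x)$, and Lebesgue dominated convergence, plus continuity of the initial ray $\hat\eta_n\to\hat\eta$, yields continuity of the limit. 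I expect this uniform-domination step to be the only substantive piece of work; the algebraic identity itself is a formal consequence of isometry invariance combined with the basepoint-change formula already in hand.
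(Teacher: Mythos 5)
Your proof is correct, but it takes a genuinely different route from the paper's. The paper copies Nica's argument verbatim: it uses only the invariance of the multiplicative cross ratio ${\rm Cr}$, extracts $\vert \phi^\prime\vert$ as the limit $\lim_{y\to x}\delta(\phi x,\phi y)/\delta(x,y)$, which by the cross-ratio identity equals a continuous expression $\vert\phi^\prime\vert_{u,v}$ in auxiliary points $u,v$, and then patches these local definitions together since the limit is independent of $u,v$. You instead exploit the explicit construction of $\delta$ from the weighted Busemann cocycle $q$, obtaining the closed formula $\vert\phi^\prime\vert(\xi)=\exp\bigl(-c\,q_{\phi\xi}(x,\phi x)\bigr)$ from isometry invariance of $f$ plus the basepoint-change identity. (One caveat on normalization: with the paper's definition $(\xi\mid\eta)_x=\tfrac12(q_\xi(x,y)+q_\eta(x,y))$, the cocycle property of $q$ gives $(\xi\mid\eta)_y=(\xi\mid\eta)_x+\tfrac12 q_\xi(y,x)+\tfrac12 q_\eta(y,x)$, so the displayed formula in Lemma~\ref{independence} that you quote is off by a factor $\tfrac12$; this only changes your constant $c$ from $2\epsilon$ to $\epsilon$ and affects nothing else.) Your approach buys an explicit geometric identification of $\vert\phi^\prime\vert$ that makes positivity trivial and reduces continuity to continuity of $\zeta\mapsto q_\zeta(x,\phi x)$, which your uniform-domination argument handles correctly since the constant in the decay estimate $d(\gamma^\prime(s),\hat\eta^\prime(s))\le C_0e^{-as}$ depends only on $d(x,\phi x)$ and the curvature bounds; it also connects $\vert\phi^\prime\vert$ directly to the conformal cocycle used later in the paper. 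The paper's argument is more formal and would apply to any continuous quasimetric admitting an invariant cross ratio, at the cost of the extension-by-patching step and of leaving $\vert\phi^\prime\vert$ implicit.
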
  
\begin{proof} We copy the short proof from Lemma 6 of \cite{Ni13}
for completeness.
Let $x,u,v$ be a triple of distinct points in $\partial M$.
Since $\phi$ preserves ${\rm Cr}$, 
for any fourth distinct point $y$ we have
\begin{equation}
\frac{\delta (\phi x,\phi y)}{\delta(x,y)}
\frac{ \delta(\phi u,\phi v)}{\delta(u,v)}=
\frac{\delta(\phi x,\phi v)}{\delta(x,v)}\frac{\delta(\phi y,\phi u)}{\delta(y,u)}.
\notag\end{equation}
When $y\to x$ one obtains
\begin{equation}\label{cro}\lim_{y\to x}\frac{\delta(\phi x,\phi y)}{\delta (x,y)}=
\frac{\delta(\phi x, \phi v)}{\delta(x,v)} 
\frac{\delta(\phi x, \phi u)}{\delta (x,u)}\frac {\delta(u,v)}{\delta (\phi u,\phi v)}.\end{equation}

Let $\vert \phi^\prime \vert_{u,v}$ denote the right-hand side of 
equation (\ref{cro}), viewed as a function of $x$.
Then $\vert \phi^\prime\vert_{u,v}$ is a positive continuous function on 
$X-\{u,v\}$. Since the left-hand side of equation (\ref{cro}) does not 
depend on $u,v$, by replacing $u,v$ by a different pair of points 
we can extend $\vert \phi^\prime \vert_{u,v}$ to a positive 
function $\vert \phi^\prime \vert$ on all of $X$.
The formula in the lemma is now immediate from the definition and invariance
of ${\rm Cr}$ (see the proof of Lemma 6 of \cite{Ni13}).
\end{proof}  

Recall that the notion of a Lipschitz map makes sense
for the quasimetric 
$\delta$. 
Since there exists a constant 
$c>0$ such that $c\delta\leq d\leq \delta$, 
Lipschitz maps
for $d$ are precisely the Lipschitz 
maps for $\delta$.

The following corollary can readily be checked directly but is an immediate
consequence of Lemma \ref{derivative}. 

\begin{corollary}\label{lipschitz10}
${\rm Iso}(M)$ acts on $(\partial M, d)$ as a group of 
bi-Lipschitz transformations.
\end{corollary}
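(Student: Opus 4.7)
The plan is to read the bi-Lipschitz property directly off of the multiplicative identity
\[\delta^2(\phi\xi,\phi\eta)=|\phi'|(\xi)\,|\phi'|(\eta)\,\delta^2(\xi,\eta)\]
supplied by Lemma \ref{derivative}. Since $\partial M$ is compact (being homeomorphic to a sphere) and $|\phi'|$ is continuous and strictly positive on $\partial M$, there exist constants $0<a_\phi\le b_\phi<\infty$ with $|\phi'|(\zeta)\in[a_\phi,b_\phi]$ for every $\zeta\in\partial M$. Taking square roots in the displayed identity I would get
\[a_\phi\,\delta(\xi,\eta)\le \delta(\phi\xi,\phi\eta)\le b_\phi\,\delta(\xi,\eta),\]
so $\phi$ is bi-Lipschitz with respect to the quasimetric $\delta$. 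Applying the same reasoning to $\phi^{-1}\in{\rm Iso}(M)$ (or noting that the upper bound for $\phi^{-1}$ is the reciprocal of the lower bound for $\phi$) confirms that the inverse map is Lipschitz as well.

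To pass from $\delta$ to the genuine metric $d$, I would invoke the comparison $c\delta\le d\le \delta$ recorded just before the corollary (this is the content of Lemma \ref{bilipmetric}). Combining the two one-sided inequalities yields
\[d(\phi\xi,\phi\eta)\le \delta(\phi\xi,\phi\eta)\le b_\phi\,\delta(\xi,\eta)\le (b_\phi/c)\,d(\xi,\eta),\]
and symmetrically
\[d(\phi\xi,\phi\eta)\ge c\,\delta(\phi\xi,\phi\eta)\ge c a_\phi\,\delta(\xi,\eta)\ge c a_\phi\,d(\xi,\eta),\]
so $\phi$ is bi-Lipschitz for $d$ with constants $(c a_\phi, b_\phi/c)$.

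There is essentially no obstacle here; the only thing to check is that the continuous positive function $|\phi'|$ produced by Lemma \ref{derivative} is bounded away from $0$ and $\infty$, which is automatic from the compactness of $\partial M$. The corollary is really a bookkeeping consequence of the conformal transformation rule for $\delta$ together with the bi-Lipschitz equivalence of $\delta$ and $d$.
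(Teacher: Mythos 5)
Your argument is correct and is exactly the route the paper intends: the paper simply declares the corollary an immediate consequence of Lemma \ref{derivative}, and your write-up supplies the (routine) details — boundedness of the positive continuous function $\vert\phi'\vert$ on the compact boundary, the resulting two-sided bound for $\delta$, and the transfer to $d$ via $c\delta\le d\le\delta$.
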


\begin{lemma}\label{lipschitz}
  For each $\phi\in {\rm Iso}(M)$
 the function $\vert \phi^\prime\vert$ is Lipschitz 
continuous for $d$.
\end{lemma}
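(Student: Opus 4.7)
I would proceed by deriving an explicit exponential formula for $|\phi'|$
and then verifying the Lipschitz bound for the exponent directly.
Combining Lemma \ref{derivative} with isometry-invariance of the cross ratio
(Corollary \ref{moebius}) and the basepoint-change formula
$(\xi|\eta)_y - (\xi|\eta)_x = q_\xi(y,x) + q_\eta(y,x)$ used in the proof of
Lemma \ref{independence}, one shows $(\phi\xi|\phi\eta)_x = (\xi|\eta)_{\phi^{-1}x}$.
Substituting into $\delta^2(\phi\xi,\phi\eta) = |\phi'|(\xi)|\phi'|(\eta)\delta^2(\xi,\eta)$
and using the antisymmetry $q_\xi(x,y) = -q_\xi(y,x)$ yields
\[ \log|\phi'|(\xi) + \log|\phi'|(\eta) = 2\epsilon\bigl[q_\xi(x,\phi^{-1}x) + q_\eta(x,\phi^{-1}x)\bigr]. \]
Taking $\eta \to \xi$ pins down the additive constant and gives
$\log|\phi'|(\xi) = 2\epsilon\, q_\xi(x, \phi^{-1}x)$.

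Since $|\phi'|$ is continuous and positive on the compact space $\partial M$ and $\log$ is
Lipschitz on its bounded range, it suffices to prove that
$\xi \mapsto q_\xi(x, y)$ is Lipschitz in $d$ for fixed $x, y \in M$.
From the proof of Lemma \ref{converge} one has the decomposition
$q_\xi(x,y) = J_1(\xi) + J_2(\xi)$, with
$J_1(\xi) = \int_0^\infty [f(\gamma_\xi'(s)) - f(\hat\eta_\xi'(s))]\,ds$
(an exponentially convergent integral by strong asymptoticity) and
$J_2(\xi) = \int_0^{u_\xi} f(\hat\eta_\xi'(s))\,ds$ (an integral over a
short segment whose length $u_\xi$ is essentially $b_\xi(y) - b_\xi(x)$).
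The term $J_2$ depends on $\xi$ through $C^3$-Busemann data and is
Lipschitz in $\xi$ by standard arguments; the main content lies in bounding $J_1$.

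For $\xi,\xi' \in \partial M$ with $r = \delta(\xi,\xi')$, set $T = -\log r/\epsilon$.
Jacobi comparison in curvature $\leq -a^2$ yields
$d_{\rm Sasaki}(\gamma_\xi'(s),\gamma_{\xi'}'(s)) \leq C e^{a(s-T)}$ for $s \leq T$,
with analogous bounds for the $\hat\eta$-pair, while for $s > T$ the integrand
of $J_1$ itself decays exponentially by strong asymptoticity of each
$\gamma, \hat\eta$ pair. Coupling these decay estimates
with H\"older continuity of $f$ (Proposition \ref{hoelder}) and
Lemma \ref{symmetry} to control the tail produces $|J_1(\xi) - J_1(\xi')| \leq C r$.

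The principal obstacle is obtaining a Lipschitz rather than merely H\"older
bound: this requires the H\"older exponent $\alpha$ of $f$ and the curvature
constant $a$ to satisfy $\alpha a/\epsilon \geq 1$, which can be arranged
by choosing the visual parameter $\epsilon$ sufficiently small.
Shrinking $\epsilon$ preserves the conclusions of
Lemmas \ref{quasimetric} and \ref{bilipmetric}, and composing with the
Lipschitz exponential then yields the claim.
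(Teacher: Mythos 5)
Your argument is correct in outline but takes a genuinely different route from the paper. The paper's proof is purely formal, following Nica: fix a third point $\zeta$ with $d(\xi,\zeta)\geq {\rm diam}(\partial M)/2$, use the identity of Lemma \ref{derivative} to write $\sqrt{\vert \phi^\prime\vert}(\xi)-\sqrt{\vert \phi^\prime\vert}(\eta)$ as $\frac{1}{\sqrt{\vert\phi^\prime\vert}(\zeta)}$ times $\frac{d(\phi \xi,\phi \zeta)}{d(\xi,\zeta)}-\frac{d(\phi \eta,\phi \zeta)}{d(\eta,\zeta)}$, and bound that difference by a constant multiple of $d(\xi,\eta)$ using only the triangle inequality for $d$ and the bi-Lipschitz constant of $\phi$ from Corollary \ref{lipschitz10}. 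No geometry of $M$ beyond what is already packaged into Corollary \ref{lipschitz10} enters, and the argument applies to the $\epsilon$ already fixed via Lemma \ref{bilipmetric}. You instead derive the explicit identity $\log\vert\phi^\prime\vert(\xi)=2\epsilon\, q_\xi(x,\phi^{-1}x)$ (correct up to the normalization constant in the basepoint-change formula of Lemma \ref{independence}) and then prove regularity of $\xi\mapsto q_\xi(x,y)$ by exponential comparison. This is considerably more work, but it buys something the soft argument does not: the identification of $\log\vert\phi^\prime\vert$ with a weighted Busemann cocycle, which is exactly the mechanism by which $\vert\phi^\prime\vert$ is later compared to a power of the Radon--Nikodym derivative of $\lambda_x$.

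Three caveats. First, your splitting of the integral defining $J_1$ at the fellow-travelling time yields a H\"older exponent of roughly $\alpha a/(2\epsilon D)$, where $D$ is the upper bound for $f$, which enters because the weighted product $(\xi\mid\xi^\prime)_x$ only determines the actual fellow-travelling time of the rays up to the multiplicative constants $c,D$ bounding $f$; your stated threshold $\alpha a/\epsilon\geq 1$ is therefore not quite the correct one, although the remedy (shrink $\epsilon$) is unaffected. Second, and more substantively, your proof establishes the lemma only after possibly replacing $\epsilon$ by a smaller constant, i.e.\ it proves a formally weaker statement than the one asserted for the $\epsilon$ fixed at the start of Section \ref{groupsof}; this is harmless for the application since all later constructions tolerate any sufficiently small $\epsilon$, but it should be stated, whereas the paper's proof needs no such adjustment. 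Third, $J_2$ is the case $f\equiv 1$ of the very same problem (the ordinary Busemann cocycle as a function of $\xi$, measured against the visual quasimetric), so it cannot be dismissed as standard while $J_1$ is treated carefully --- it requires the same exponential estimate and the same smallness of $\epsilon$; and the appeal to Lemma \ref{symmetry} for the tail seems spurious, as strong asymptoticity together with Lemma \ref{converge} already controls it.
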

\begin{proof}
  We follow p.779 of \cite{Ni13}. Namely, let $\xi,\eta\in \partial M$
  and choose a point $\zeta$ with $d(\xi,\zeta)\geq {\rm diam}(\partial M)/2$.
Since $d$ satisfies the 
triangle inequality, we have 
\begin{align}\label{trianglein}
\frac{d(\phi \xi, \phi \zeta)}{d(\xi,\zeta)}-\frac{d(\phi \eta, \phi \zeta)}{d(\eta,\zeta)} 
&\leq \frac{d(\phi \xi, \phi \eta)}{d(\xi,\zeta)}+\frac{d(\phi \eta,\phi \zeta)}{d(\xi,\zeta)}-
\frac{d(\phi \eta,\phi \zeta)}{d(\eta,\zeta)}\notag\\
&\leq \frac{d(\phi \xi,\phi \eta)}{d(\xi,\zeta)} +\frac{d(\phi \eta,\phi \zeta)}{d(\eta,\zeta)}
\frac{d(\xi,\eta)}{d(\xi,\zeta)} 
\end{align}

Now if $L>1$ is the Lipschitz constant for $\phi$ then 
$d(\phi \xi,\phi \eta)\leq L d(\xi,\eta)$ and additionally
$\frac{d(\phi \eta, \phi \zeta)}{d(\eta,\zeta)}\leq L$. Hence the term (\ref{trianglein}) in the above
expression is bounded from above by 
$LCd(\xi,\eta)$ 
for a universal constant $C>0$.

On the other hand, the formula for $\vert \phi^\prime \vert$ in Lemma \ref{derivative}
shows that 
\[\sqrt{\vert \phi^\prime \vert} (\xi)-
\sqrt{ \vert \phi^\prime \vert}(\eta)=\frac{1}{\sqrt {\vert \phi^\prime \vert}(\zeta)}
\bigl( \frac{d(\phi \xi,\phi \zeta)}{d(\xi,\zeta)}- \frac{d(\phi \eta, \phi \zeta)}{d(\eta,\zeta)}\bigr).\]

Together this yields 
Lipschitz continuity for $\sqrt{\vert \phi^\prime\vert}$ and hence
for $\vert \phi^\prime\vert$.
\end{proof}

By construction, we also have

\begin{lemma}\label{cocycle}
For $\phi,\psi\in \Gamma$ and all $\xi\in \partial M$ we have
\[\vert (\phi \circ \psi)^\prime\vert(\xi)= \vert \phi^\prime \vert 
(\psi(\xi)) \vert \psi^\prime\vert (\xi).\]
 \end{lemma}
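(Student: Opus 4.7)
The plan is to apply the formula of Lemma~\ref{derivative} three times and then separate variables. First, I would apply it to the composition $\phi\circ\psi$ with arguments $\xi,\eta\in\partial M$, obtaining
\[\delta^2((\phi\circ\psi)\xi,(\phi\circ\psi)\eta)=\vert(\phi\circ\psi)^\prime\vert(\xi)\,\vert(\phi\circ\psi)^\prime\vert(\eta)\,\delta^2(\xi,\eta).\]
Then I would apply Lemma~\ref{derivative} to $\phi$ with arguments $\psi\xi,\psi\eta$, and afterwards to $\psi$ with arguments $\xi,\eta$, so that
\[\delta^2(\phi(\psi\xi),\phi(\psi\eta))=\vert\phi^\prime\vert(\psi\xi)\,\vert\phi^\prime\vert(\psi\eta)\,\vert\psi^\prime\vert(\xi)\,\vert\psi^\prime\vert(\eta)\,\delta^2(\xi,\eta).\]
Since the left-hand sides agree and $\delta^2(\xi,\eta)>0$ for $\xi\neq\eta$, this yields, for all distinct $\xi,\eta\in\partial M$,
\[\vert(\phi\circ\psi)^\prime\vert(\xi)\,\vert(\phi\circ\psi)^\prime\vert(\eta)=\bigl(\vert\phi^\prime\vert(\psi\xi)\vert\psi^\prime\vert(\xi)\bigr)\bigl(\vert\phi^\prime\vert(\psi\eta)\vert\psi^\prime\vert(\eta)\bigr).\]

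To conclude, I would separate variables. Write $A(\xi)=\vert(\phi\circ\psi)^\prime\vert(\xi)$ and $B(\xi)=\vert\phi^\prime\vert(\psi\xi)\vert\psi^\prime\vert(\xi)$; both are positive functions on $\partial M$ by Lemma~\ref{derivative}. The identity $A(\xi)A(\eta)=B(\xi)B(\eta)$ for $\xi\neq\eta$ can be rewritten as $A(\xi)/B(\xi)=B(\eta)/A(\eta)$. Picking any three distinct points $\xi,\eta,\zeta\in\partial M$ and applying this relation for the pairs $(\xi,\eta)$, $(\xi,\zeta)$, and $(\eta,\zeta)$ forces $(A(\zeta)/B(\zeta))^2=1$; since $A,B>0$ this gives $A(\zeta)=B(\zeta)$, which is the claimed cocycle identity.

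A parallel route, which I would mention briefly, is to note that dividing the formula of Lemma~\ref{derivative} by $\delta^2(\xi,\eta)$ and sending $\eta\to\xi$ (using continuity of $\vert\phi^\prime\vert$) yields
\[\vert\phi^\prime\vert(\xi)=\lim_{\eta\to\xi}\frac{\delta(\phi\xi,\phi\eta)}{\delta(\xi,\eta)};\]
the cocycle identity then follows from the multiplicativity of such a limit together with continuity of $\psi$ on $(\partial M,\delta)$. Either way, there is no real obstacle: Lemma~\ref{derivative} supplies everything, and the only mild point is ensuring positivity so that the separation-of-variables step is rigorous — this is guaranteed because $\vert\phi^\prime\vert$ and $\vert\psi^\prime\vert$ are strictly positive by construction.
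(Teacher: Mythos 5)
Your argument is correct and follows the same route as the paper, whose proof simply declares the identity immediate from Lemma~\ref{derivative} together with continuity of $\vert\phi^\prime\vert$ and $\vert\psi^\prime\vert$; you have just filled in the separation-of-variables step (and the alternative limit characterization you mention is exactly the definition of $\vert\phi^\prime\vert$ given in the proof of Lemma~\ref{derivative}, so that route is equally faithful to the paper). No gaps.
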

\begin{proof}
The lemma is immediate from Lipschitz continuity of 
$\vert \phi^\prime \vert, \vert \psi^\prime \vert$ and 
Lemma \ref{derivative}. 
\end{proof}

Let as before $\lambda$ be the 
Lebesgue Liouville measure on the 
unit tangent bundle $T^1M$ of $M$. 
It disintegrates to an 
${\rm Iso}(M)$-invariant Radon measure $\hat \lambda$ on 
the space of geodesics $\partial M\times \partial M-\Delta$.

For $x\in M$ consider the Lebesgue measure $\lambda_x$ on 
$\partial M$ defined as the push forward  of the Lebesgue measure on 
$T_x^1M$ by the natural homeomorphism $T_x^1M\to \partial M$.
Let $\epsilon >0$ be the constant which enters the definition of the 
metric $d$. We have

\begin{proposition}\label{bounded}
There exists a uniformly bounded function $\beta$ such that 
$\hat \lambda=\beta \delta^{-2/\epsilon}\lambda_x\times \lambda_x$.
\end{proposition}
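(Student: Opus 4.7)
My plan is to compute the local density of $\hat\lambda$ against $\lambda_x\times\lambda_x$ at a generic pair $(\xi_0,\eta_0)$, using the horospherical product structure of the Liouville measure set up in the proof of Lemma \ref{symmetry} and the Jacobian estimate from the proof of Proposition \ref{regular}.

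First, I would use the Hopf parametrization $\Xi:T^1M\to (\partial M\times\partial M-\Delta)\times\mathbb{R}$, $v\mapsto(\gamma_v(-\infty),\gamma_v(+\infty),-b_{\gamma_v(+\infty)}(\pi v))$, under which the flow-invariance of $\lambda$ gives $\lambda=d\hat\lambda\otimes ds$. The auxiliary form $\tilde\lambda=X^\flat\wedge\omega^{su}\wedge\omega^{ss}$ from the proof of Lemma \ref{symmetry} differs from $\lambda$ by a continuous factor $\kappa\in[C^{-1},C]$, so it suffices to prove the analogous statement for the measure on $\partial M\times\partial M-\Delta$ induced by $\tilde\lambda$. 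Choosing the reference vector $v_0=\gamma_{\xi_0\eta_0}^\prime(0)$ based at the foot $y_0$ of the perpendicular from $x$ to $\gamma_{\xi_0\eta_0}$, the $\pi$-projections identify $W^{su}(v_0),W^{ss}(v_0)$ with the horospheres $H_{\xi_0},H_{\eta_0}$ through $y_0$, and restrict $\omega^{su},\omega^{ss}$ to the horospherical area forms at $y_0$.

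Next, I would express each horospherical area at $y_0$ as a density times $\lambda_x$ at the corresponding boundary point. The leaf $W^{su}(v_0)$ fibers diffeomorphically over $\partial M-\{\xi_0\}$ via $v\mapsto\gamma_v(+\infty)$, sending $v_0$ to $\eta_0$; to compute the Jacobian of the induced map $H_{\xi_0}\to\partial M$ at $(y_0,\eta_0)$ against $\lambda_x$, I would radially transport $H_{\xi_0}$ back to the distance sphere $S(x,10)$ along the geodesic $\rho:[0,T]\to M$ from $x$ to $y_0$ of length $T=d(x,y_0)$. The argument from the proof of Proposition \ref{regular} culminating in equation (\ref{comparison5}), together with Lemma \ref{hoelderclose}, shows that this Jacobian equals $\exp(\int_0^T m(\rho^\prime(t))\,dt)$ up to a bounded multiplicative factor. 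Lemma \ref{symmetry} then gives $\int_0^T m(\rho^\prime)=\int_0^T f(\rho^\prime)+O(1)$, and the proof of Lemma \ref{ultrametric} identifies $\int_0^T f(\rho^\prime)$ with $(\xi_0\mid\eta_0)_x+O(1)$. Hence the density of horospherical area on $H_{\xi_0}$ against $\lambda_x$ at $\eta_0$ is $e^{(\xi_0\mid\eta_0)_x}$ up to a bounded factor; the symmetric computation for $H_{\eta_0}$ against $\lambda_x$ at $\xi_0$ yields the same factor, and their product gives the claimed density $e^{2(\xi_0\mid\eta_0)_x}=\delta^{-2/\epsilon}(\xi_0,\eta_0)$.

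The principal obstacle is the Jacobian computation just described: the horospheres $H_{\xi_0},H_{\eta_0}$ do not pass through $x$, so the comparison to $\lambda_x$ proceeds indirectly through radial transport, during which the curvature controlling the Jacobian switches from horospherical mean curvature $m$ to distance-sphere mean curvature. The appendix Lemma \ref{hoelderclose} bounds this discrepancy by a universal exponentially decaying quantity, while Lemma \ref{symmetry} and the Gromov product construction of Section \ref{anahlfors} convert the resulting integral into $(\xi_0\mid\eta_0)_x$ modulo a bounded additive error. All such bounded additive errors exponentiate to bounded multiplicative factors that are absorbed into $\beta$.
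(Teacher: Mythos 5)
Your argument is correct and follows essentially the same route as the paper: both rest on the local product form $\tilde\omega=\omega^{su}\wedge\omega^{ss}$ from the proof of Lemma \ref{symmetry} (comparable to $\lambda$ by a bounded factor $\kappa$), the radial-projection Jacobian estimates from the proof of Proposition \ref{regular} together with Lemma \ref{hoelderclose}, and the identification of $\int m$ along the perpendicular from $x$ to the geodesic $\xi_0\eta_0$ with the Gromov product via Lemma \ref{symmetry} and the proof of Lemma \ref{ultrametric}. The only organizational difference is that the paper factors the comparison through the intermediate measure $\lambda_y\times\lambda_y$ at the foot point $y$ (via the densities $p(x,y,\xi)p(x,y,\eta)$) while you compare the horospherical area forms to $\lambda_x$ directly; the ingredients and estimates are the same.
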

\begin{proof} By Corollary \ref{absolute}, 
the measure class of the measure 
$\lambda_x$ on $\partial M$ 
does not depend on $x$.

Let $p(x,y,\xi)$ be 
the Radon Nikodym derivative of $\lambda_y$ with respect to $\lambda_x$ 
at $\xi$ (whenever this exists). It follows from Proposition \ref{regular} and 
its proof, using 
 Lemma \ref{hoelderclose}, Corollary \ref{meancurvature} and 
the definition of the function $\delta$, that there exists a universal 
constant $C_0>0$ such that the following holds true. Let 
$\xi\not=\eta\in \partial M$ and let $y$ be the shortest distance projection of 
$x$ into the geodesic connecting $\xi$ to $\eta$; then 
\[\delta^{2/\epsilon}(\xi,\eta) p(x,y,\xi)p(x,y,\eta)\in [C_0^{-1},C_0].\]

Thus we are left with showing the following.
The measure $\hat \lambda$ is absolutely continuous with respect to 
$\lambda_x\times \lambda_x$, and there exists 
a number $C_1>0$ such that 
for any geodesic 
$\gamma\subset M$  and any $x\in \gamma$, the Radon Nikodym derivative
at $(\gamma(-\infty),\gamma(\infty))$ of $\hat \lambda$ 
with respect to $\lambda_x\times \lambda_x$, if it is exists, 
is contained in the interval $[C_1^{-1},C_1]$.

To this end
recall from the proof of Lemma \ref{symmetry} the definition of the
$(2n-2)$-form $\tilde \omega$ on $T^1M$. This form restricts to a 
volume form on a local smooth transversal $N$ for the geodesic flow,
defining a measure $\tilde \lambda$ on $N$ of the form 
$\kappa \hat \lambda$ where $\kappa$ is a continuous function with values in 
a compact subinterval of $(0,\infty)$ not depending on $N$. 

Let $x\in N$; then by perhaps decreasing the size of $N$, we may assume
that $\lambda_x\times \lambda_x$ descends to a measure on $N$
by viewing $N$ as an (open) subset of the space of geodesics. 
It now suffices to show that $\lambda_x\times \lambda_x$ is contained in the measure
class of $\tilde \lambda$, with density near $x$ bounded from above and below
by a universal positive constant. However this follows from 
Proposition \ref{regular} and its proof. 
As this is a standard argument in smooth dynamics (which does not rely on 
an underlying dynamical system), carefully laid out in Chapter III of \cite{Mn87}, 
we omit a more detailed discussion.

Together this completes the proof of the proposition.
\end{proof}

Let us summarize now what we achieved so far. For $x\in M$ consider the 
measure $\lambda_x$ on $\partial M$. Its measure class is invariant under the 
action of ${\rm Iso}(M)$. By Lemma \ref{derivative} and Lemma \ref{lipschitz}, 
for each $\phi\in \Gamma$ there exists a  
positive Lipschitz continuous
function $\vert \phi^\prime \vert$ on $\partial M$ 
which coincides with ${\rm RN}(\phi)^{1/\epsilon}$ up to a universal constant, where
${\rm RN}(\phi)$ is taken with respect to  the measure $\lambda_x$. 

More precisely, the function $\vert \phi^\prime \vert(x)$ measures the infinitesimal dilatation of 
$\phi$ with respect to the quasimetric $\delta=\delta_x$. Lemma \ref{cocycle}
shows that 
its logarithm defines a cocycle $c_\phi$ for $\Gamma$ by 
\[c_\phi(x,y)=\log \vert \phi^\prime(x)\vert - \log \vert \phi^\prime(y)\vert.\]
As the measure $\lambda_x$ is Ahlfors regular for $d_x$, 
via comparing the cocycle defined by the logarithm of Radon Nikodym derivatives 
to the cycycle $c_\phi$, 
condition $(*)$
from Section \ref{coho} is fulfilled. In particular, for sufficiently large $p$, this 
cocycle has values in $L^p(\partial M\times \partial M-\Delta,\hat \lambda)$. 

Furthermore, there is a bounded function 
$\beta$ such that $\beta\delta_x^{-1/\epsilon}\lambda_x\times \lambda_x$ is invariant
under the action of ${\rm Iso}(X)$.
Now if $\phi\in {\rm Iso}(M)$ is a loxodromic element then 
$\phi$ acts on $\partial M$ with north-south dynamics and hence 
the assumptions stated in Proposition \ref{lowerestimate}
are fulfilled for $\phi$. As a consequence, the cocycle is unbounded in 
$L^p(\partial M\times \partial M-\Delta,\hat \lambda)$ for all sufficiently large $p$
and hence it can not be a coboundary. 
Together we have shown

\begin{theorem}\label{actionon}
  Let $\Gamma$ be a discrete group which admits
a non-elementary isometric action on 
a Hadamard manifold $M$ of bounded negative curvature, with
first order bounded 
curvature tensor. Then there exists a number $p>1$ and 
an isometric action of $\Gamma$ on an 
$L^p$-space $V$ such that $H^1(\Gamma,V)\not=0$. 
\end{theorem}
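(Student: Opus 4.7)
The plan is to assemble the three ingredients developed in Sections \ref{coho}, \ref{products} and \ref{anahlfors} with the Lipschitz derivative results of this section. First I would fix a basepoint $x\in M$ and a small $\epsilon>0$ for which the quasimetric $\delta=\delta_{x,\epsilon}$ on $\partial M$ is bi-Lipschitz to an honest metric $d=d_x$ (Lemma \ref{bilipmetric}). By Proposition \ref{regular}, $(\partial M,d,\lambda_x)$ is Ahlfors regular of dimension $Q=1/\epsilon$. Corollary \ref{lipschitz10} and Lemma \ref{lipschitz} show that $\Gamma$ acts by bi-Lipschitz homeomorphisms and that each dilatation function $\vert \phi^\prime\vert$ is Lipschitz continuous. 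Since ${\rm RN}(\phi)$ is (up to a universal constant) a positive power of $\vert \phi^\prime\vert$ and is bounded away from $0$ and $\infty$ on the compact space $\partial M$, the Radon Nikodym derivative ${\rm RN}(\phi)$ is Lipschitz and bounded away from zero as well. This verifies Condition $(*)$ of Section \ref{coho}.

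For any $p\geq 2Q$ the machinery of Section \ref{coho} then yields a bounded linear representation of $\Gamma$ on $L^p(\nu)$, where $\nu=d^{-2Q}\lambda_x\times\lambda_x$, together with the cocycle $c_\phi(\xi,\eta)=\log{\rm RN}(\phi)(\xi)-\log{\rm RN}(\phi)(\eta)\in L^p(\nu)$ (Lemma \ref{integrable}). To upgrade this to an \emph{isometric} action, I would invoke Proposition \ref{bounded}: the ${\rm Iso}(M)$-invariant Radon measure $\hat\lambda$ on $\partial M\times \partial M-\Delta$ equals $\beta\,\delta^{-2/\epsilon}\lambda_x\times\lambda_x$ for a function $\beta$ bounded above and away from zero. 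Since $2/\epsilon=2Q$ and $\delta$ is bi-Lipschitz to $d$, the measures $\nu$ and $\hat\lambda$ are mutually absolutely continuous with uniformly bounded density. Hence $L^p(\hat\lambda)$ is canonically isomorphic as a Banach space to $L^p(\nu)$, and the natural $\Gamma$-action on $L^p(\hat\lambda)$ is isometric by ${\rm Iso}(M)$-invariance of $\hat\lambda$. Transporting the cocycle $c_\phi$ through this identification produces a cocycle for an isometric representation of $\Gamma$ on $V=L^p(\hat\lambda)$.

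Finally I would show that this cocycle is not a coboundary via Proposition \ref{lowerestimate}. Non-elementarity of the action on the Gromov hyperbolic space $M$ yields a loxodromic element $\phi\in\Gamma$; by standard pinched negative curvature dynamics $\phi$ acts on $\partial M$ with north–south dynamics, so in particular $\phi^{-1}$ has an attracting fixed point in $(\partial M,d)$. The second hypothesis of Proposition \ref{lowerestimate}, namely a uniform bound ${\rm RN}_\nu(\phi^\ell)\in[C^{-1},C]$ for all $\ell\in\mathbb{Z}$, follows from the isometry of the $\Gamma$-action on $L^p(\hat\lambda)$ together with the comparability of $\nu$ and $\hat\lambda$ established in Step 2. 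Proposition \ref{lowerestimate} then forces $\Vert c_{\phi^k}\Vert_{L^p}^p\to\infty$ as $k\to\infty$. Since every coboundary is a bounded family under an isometric representation, $c$ is not a coboundary and defines a nontrivial class in $H^1(\Gamma,V)$.

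The main obstacle, in my view, is the exponent-matching in Step 2: the whole argument only works because the dimension $Q=1/\epsilon$ coming from Ahlfors regularity of $(\partial M,d,\lambda_x)$ coincides with the exponent $1/\epsilon$ that appears in the density of the geometric Lebesgue–Liouville measure $\hat\lambda$ with respect to $\lambda_x\times\lambda_x$. This coincidence is exactly what Proposition \ref{bounded} encodes, and it is what allows the Besov-theoretic construction of Section \ref{coho} to produce a cocycle for the \emph{isometric} $\Gamma$-action on $L^p(\hat\lambda)$ rather than merely a bounded linear one; everything else in the proof is a dynamical application of Proposition \ref{lowerestimate} plus bookkeeping.
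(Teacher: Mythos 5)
Your proposal is correct and follows essentially the same route as the paper: you verify Condition $(*)$ via Lemma \ref{derivative}, Corollary \ref{lipschitz10} and Lemma \ref{lipschitz}, use Proposition \ref{bounded} to identify $\nu=d^{-2Q}\lambda_x\times\lambda_x$ with the ${\rm Iso}(M)$-invariant measure $\hat\lambda$ up to bounded density (so the representation becomes isometric and hypothesis (2) of Proposition \ref{lowerestimate} holds uniformly in $\ell$), and then apply Proposition \ref{lowerestimate} to a loxodromic element with north--south dynamics to conclude the cocycle is unbounded and hence not a coboundary. This matches the paper's argument in both structure and substance.
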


For the proof of the Theorem  from the introduction, we are left
with removing the assumption on the first order bound of the covariant
derivative of $M$.

\begin{proof}[Proof of the main theorem]
Let $\Gamma\subset {\rm Iso}(M)$ acting on a Hadamard manifold $(M,g)$ of bounded 
negative curvature. By the main proposition of \cite{K05}, there exists a positive number 
$\epsilon >0$ with the following properties.
\begin{enumerate}
\item The Ricci flow $g_t$ started at $g_0=g$ exists on the time interval $[0,2\epsilon]$.
\item The curvature of $g_\epsilon$ is contained in $[-b^2,-a^2]$ for some numbers
$0<a<b$ depending on the curvature of $M$, the dimension and $\epsilon$.
\item $\Vert \nabla R_\epsilon\Vert \leq C$ where $C>0$ is a fixed constant and $R_\epsilon$ 
is the curvature tensor of $g_\epsilon$. 
\end{enumerate}

Since the Ricci flow commutes with isometries, the group $\Gamma$ admits a non-elementary
isometric action on $(M,g_\epsilon)$. Thus we can apply Theorem \ref{actionon} to 
the action of $\Gamma$ on $(M,g_\epsilon)$ and complete the proof.
\end{proof}

Since via a factor projection, a lattice in a semi-simple Lie group
$G$ of non-compact type acts 
on each rank one factor of $G$ in
a non-elementary fashion we conclude

\begin{corollary}\label{irreducible}
Let $\Gamma$ be a lattice in a semisimple Lie group 
$G$ of noncompact type containing at least one factor which is of rank one.
Then $\Gamma$ admits an isometric action on an $L^p$-space $V$ with 
$H^1(\Gamma,V)\not=0$.
\end{corollary}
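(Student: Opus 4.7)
The plan is to reduce Corollary~\ref{irreducible} to Theorem~\ref{main} by constructing a non-elementary isometric action of $\Gamma$ on the symmetric space of a rank one factor of $G$.

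First I would split off a rank one factor: write $G$ (up to finite covers and compact factors, which do not affect the cohomological conclusion) as an almost direct product $G = G_0 \cdot H$ with $G_0$ a simple Lie group of real rank one. The symmetric space $X_0$ of $G_0$ is then a Hadamard manifold whose sectional curvature is pinched between two negative constants, and $G_0$ sits in ${\rm Iso}(X_0)$ with finite index. The projection $G \to G_0$, composed with this embedding, yields an isometric action of the lattice $\Gamma \subset G$ on the bounded negatively curved Hadamard manifold $X_0$. Should $G$ already equal $G_0$, the corollary is subsumed by Corollary~\ref{rankone} and there is nothing further to check.

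The main step, and in my view the only real obstacle, is to verify that this induced action is non-elementary in the sense of the introduction. I would separate two cases. If $\Gamma$ is an irreducible lattice, then its projection to $G_0$ is dense in $G_0$, a standard consequence of Borel density together with the structure theory of semisimple Lie groups. A dense subgroup of the noncompact simple group $G_0$ cannot have a bounded orbit on $X_0$, since its closure would then lie in a maximal compact subgroup; and it cannot fix a point $\xi \in \partial X_0$, since the stabilizer of $\xi$ in $G_0$ is a proper parabolic subgroup, so it cannot contain all of $G_0$. If instead $\Gamma$ is reducible, then by definition it is commensurable with a direct product of lattices in the simple factors of $G$, so a finite index subgroup of $\Gamma$ surjects onto a lattice $\Gamma_0$ in $G_0$. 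Any lattice in a simple rank one Lie group acts non-elementarily on its symmetric space, since finite covolume of $\Gamma_0 \backslash X_0$ rules out both relatively compact orbits and containment of $\Gamma_0$ in a proper parabolic subgroup.

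With non-elementarity established in either case, Theorem~\ref{main} applies directly to the action of $\Gamma$ on $X_0$, producing a representation of $\Gamma$ on an $L^p$-space $V$ with $H^1(\Gamma, V) \neq 0$. The remaining bookkeeping, namely passing between $G$ and its appropriate finite cover and handling compact factors, is routine and does not affect the cohomological output.
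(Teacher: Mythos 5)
Your proposal is correct and follows essentially the same route as the paper, which simply invokes the factor projection and asserts that the induced action of the lattice on the rank one factor is non-elementary before applying Theorem \ref{main}. Your write-up just supplies the details the paper leaves implicit (the irreducible/reducible dichotomy, Borel density, and the exclusion of bounded orbits and boundary fixed points).
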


\begin{remark}\label{lp}
The lower bound on $p$ we obtain from the proof of Theorem \ref{actionon}
is not sharp. For example, if $M$ is hyperbolic space of dimension $n\geq 2$, 
then the bound we find equals $p=2n-2$ while it is known that $p=2$ is possible. 
\end{remark}

\appendix
\section{Mean curvature of horospheres}

This appendix is independent of the rest
of this article, and we keep the exposition self-contained.

Throughout, we consider
a smooth Hadamard manifold $M$ of bounded negative curvature, that is,
the curvature is contained in an interval $[-b^2,-a^2]$ for numbers
$0<a\leq b$. Denote by $\partial M$ the ideal boundary of $M$.

Let $\Pi:T^1M\to M$ be the unit tangent bundle of $M$. The Levi Civita
connection on $TM$ determines a splitting 
\[TT^1M={\cal H}\oplus {\cal V}\] 
where the \emph{vertical bundle} ${\cal V}$ 
is the tangent bundle of the fibers,
and such that the restriction of $d\Pi$ to 
the \emph{horizontal bundle} ${\cal H}$ is a fiberwise isomorphism.
The decomposition is orthogonal with respect to the Sasaki metric on 
$T^1M$. For this metric, the projection $\Pi$ is a Riemannian submersion. 

For $x\in M$ and a unit vector
$v\in T^1M$ denote by $\gamma_v$ the geodesic with initial velocity 
$\gamma_v^\prime(0)=v$. 
The \emph{geodesic spray} $X$ is the generator of the geodesic flow 
$\Phi^t:T^1M\to T^1M, v \to \Phi^tv=\gamma_v^\prime(t)$. It is a section of 
the bundle ${\cal H}$. Thus ${\cal H}$ decomposes as an orthogonal 
direct sum ${\cal H}=E\oplus \mathbb{R}X$ where the fiber of 
$E$ at $v$ is mapped by $d\Pi$ onto $v^\perp$. 

Let $A(v)$ be the shape operator 
at $x$ of 
the \emph{horosphere} through $x$ defined by the boundary point 
$\gamma_v(-\infty)\in \partial M$. Since horospheres are of class $C	^2$, 
this is a well defined symmetric linear endomorphism of $v^\perp$. 
Using the identification of the fiber $E_v$ of $E$ at $v$ with $v^\perp$, 
we view $A(v)$ as a symmetric 
linear endomorphism of $E_v$. Thus 
$v\to A(v)$ is a symmetric section of the bundle $E^*\otimes E$. 
The bundle $E^*\otimes E$ in turn is naturally equipped with a smooth
Riemannian metric constructed from the Levi Civita connection of the 
Sasaki metric.

The following is the main result of this appendix. It was established in \cite{Ho40}
for surfaces, that is, in the case that the dimension $n$ of $M$ equals $2$. It is also
well known under the assumption that $M$ admits a cocompact isometry group
(where boundedness of $\nabla R$ is automatic), using tools from 
smooth dynamics. We refer to \cite{Mn87} for 
details. 

\begin{proposition}\label{hoelder2}
If the covariant derivative $\nabla R$ of the curvature tensor is 
uniformly bounded in norm then the section $v\to A(v)$ of 
$E^*\otimes E$ is H\"older continuous. 
\end{proposition}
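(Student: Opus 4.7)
The plan is to realize $A(v)$ as the limit, as $t\to\infty$, of the shape operators $U_t(v)$ of the geodesic spheres $S(\gamma_v(-t),t)$ at $\Pi(v)$, taken with outward unit normal $v$. Each $U_t$ depends smoothly on $v$ since metric spheres of fixed finite radius are smooth, but its Lipschitz constant on $T^1M$ grows with $t$, while the convergence $U_t\to A$ is exponentially fast. Interpolating between these competing estimates will yield H\"older regularity of $A$, with an exponent determined by the curvature bounds and by $\|\nabla R\|_\infty$. Concretely, writing $R(s)$ for the curvature operator on $\gamma_v^\prime(s)^\perp$ along $\gamma_v$, any shape operator $U(s)$ of a family of equidistant hypersurfaces normal to $\gamma_v$ satisfies the Riccati equation $U^\prime+U^2+R=0$. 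The horosphere operator $A$ is the unique bounded positive solution, with eigenvalues in $[a,b]$ by Rauch comparison, and $U_t(v)$ arises by solving Riccati backwards on $[-t,0]$ with ``point-like'' data at $-t$. A standard comparison argument for the difference $D(s)=U_t(s)-A(s)$, whose symmetric linear evolution has coefficient bounded below by roughly $2a$, then yields
\[ \|U_t(v)-A(v)\|\leq Ce^{-2at}\qquad (t\geq 1), \]
uniformly in $v$.

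The core step is the Lipschitz estimate on $U_t$. Varying $v$ in the Sasaki metric produces a Jacobi field $J$ along $\gamma_v$, and standard Jacobi field estimates under bounded sectional curvature give $\|J(s)\|,\|J^\prime(s)\|\leq C_3 e^{b|s|}$. Under such a variation, the coefficient $R(s)$ in the Riccati equation for $U_t$ is perturbed by a term whose leading part is $\nabla_{J(s)} R$, and the initial datum at $-t$ moves as well. This is where the hypothesis $\|\nabla R\|_\infty<\infty$ enters: it converts $\nabla_{J(s)} R$ into a quantity controlled linearly by $\|J(s)\|$. A Gr\"onwall argument applied to the variational Riccati equation on $[-t,0]$ then produces constants $C_1,C_2>0$ depending only on $a$, $b$, $\|\nabla R\|_\infty$, and the dimension, such that
\[ \|U_t(v_1)-U_t(v_2)\|\leq C_1 e^{C_2 t}\,d_{\mathrm{Sas}}(v_1,v_2)\qquad (t\geq 1), \]
where $d_{\mathrm{Sas}}$ denotes the Sasaki distance on $T^1M$.

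Finally, given $v_1,v_2\in T^1M$ with $d_{\mathrm{Sas}}(v_1,v_2)$ small, choose $t$ balancing the two estimates: $e^{-2at}=e^{C_2 t}\,d_{\mathrm{Sas}}(v_1,v_2)$, equivalently $t=(2a+C_2)^{-1}\log(1/d_{\mathrm{Sas}}(v_1,v_2))$. Combining with the triangle inequality gives
\[ \|A(v_1)-A(v_2)\|\leq 2Ce^{-2at}+C_1 e^{C_2 t}\,d_{\mathrm{Sas}}(v_1,v_2)\leq C^\prime d_{\mathrm{Sas}}(v_1,v_2)^{\alpha}, \]
with H\"older exponent $\alpha=2a/(2a+C_2)$. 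Transferring from this operator norm bound to a statement about the section of $E^*\otimes E$ is cosmetic, since the natural identification of $v^\perp$ with the fibre $E_v$ and the comparison of $v_1^\perp$ with $v_2^\perp$ via parallel transport are smooth along $T^1M$. The main obstacle is the Lipschitz estimate on $U_t$: one must carefully track how a Sasaki perturbation of $v$ propagates through the Jacobi equation, feeds into the coefficient $R(s)$ of the Riccati equation via $\nabla R$, and then into $U_t(v)$, producing a uniformly finite exponential rate $C_2$. Precisely this finiteness requires the bound on $\nabla R$; without it, the variational Riccati equation has an uncontrolled forcing term and the H\"older exponent $2a/(2a+C_2)$ degenerates to zero.
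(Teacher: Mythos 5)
Your proposal follows the same overall architecture as the paper's proof: approximate $A(v)$ by the shape operators of finite-radius objects, establish exponential convergence in the radius (this is exactly Lemma \ref{hoelderclose}, proved there by Jacobi field comparison rather than by the contraction property of the Riccati flow, with the same conclusion), prove that the finite-radius operators depend on $v$ with a constant growing at most exponentially in the radius, and interpolate by choosing the radius logarithmically in the distance between the two vectors. Where you genuinely diverge is in the engine for the middle step. You propose a Gr\"onwall estimate on the linearized (variational) Riccati equation, using $\Vert\nabla R\Vert_\infty$ to bound the forcing term $\nabla_{J(s)}R$ by the Jacobi field norm; the paper instead establishes the one-sided operator inequality $R_0(t)(1+C_4\angle(v,w)^{9/10})\leq R_{\angle(v,w)}(t)$ and invokes the Eschenburg--Heintze comparison theorem for Riccati equations, together with a multiplicative rescaling $Q(t)=\exp(qV(t))$ of the Jacobi matrix, to squeeze $A_r(w)$ between $q^{-1}A_r(v)$ and $qA_r(v)$. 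Your linearization-plus-Gr\"onwall route is more standard and arguably more transparent about where the exponent $\alpha=2a/(2a+C_2)$ comes from; the paper's order-comparison route avoids differentiating the Riccati flow in the parameter and reduces first to vectors with a common footpoint (using that $A$ is $C^1$ along each leaf ${\rm grad}\,b_\xi$), which you bypass by working directly with the Sasaki distance.

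One spot in your write-up is thinner than it should be. To compare the curvature coefficients $R_{v_1}(s)$ and $R_{v_2}(s)$ as endomorphisms of a single vector space, and likewise the solutions $U_t(v_1)(s)$, $U_t(v_2)(s)$, you must transport everything into a common parallel frame along the variation, and the drift of these frames contributes an additional commutator term $[\Omega,R]$ to your forcing. Dismissing this as ``cosmetic'' because parallel transport is ``smooth along $T^1M$'' is not quite enough on a noncompact manifold: one needs a uniform bound on the frame drift in terms of the Jacobi field, which is the content of the paper's Lemma \ref{controlq} and requires an argument (there, Stokes's theorem applied to the connection and curvature forms on the frame bundle). The bound does hold under bounded curvature alone, the extra term is again $O(\Vert J(s)\Vert)$, and your Gr\"onwall argument absorbs it, so this is a fillable omission rather than a gap that breaks the proof.
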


For $v\in T^1M$, the mean curvature $m(v)$ of the horosphere 
defined by $v$ equals the trace of the shape operator $A(v)$.
Thus as an immediate corollary of Proposition \ref{hoelder2} we obtain

\begin{corollary}\label{meancurvature} 
If the covariant derivative $\nabla R$ of the curvature tensor is 
uniformly bounded in norm then 
the function $m:v\to m(v)$ on $T^1M$ is H\"older continuous.
\end{corollary}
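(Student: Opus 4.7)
The plan is to exploit the fact that the mean curvature of a horosphere at a point is, by definition, the trace of its shape operator, and then invoke Proposition \ref{hoelder2} directly. Since $m(v)=\operatorname{tr} A(v)$ where $A$ is viewed as a section of the bundle $E^*\otimes E\to T^1M$, the corollary should reduce to the observation that composition of a H\"older continuous section with a fiberwise linear map of bounded operator norm is again H\"older continuous.

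To make this precise I would proceed as follows. First, note that on each fiber $E_v\otimes E_v^*$, the trace is a canonical linear functional, and with respect to the Riemannian metric induced on $E^*\otimes E$ from the Sasaki metric it has operator norm bounded by $\sqrt{n-1}$ (where $n-1=\dim E_v$). Equivalently, the map $\operatorname{tr}:E^*\otimes E\to \underline{\mathbb{R}}$ is a morphism of Riemannian vector bundles whose fiberwise Lipschitz constant is uniform over $T^1M$. Second, pick any point $v_0\in T^1M$ and use a local orthonormal trivialization of $E$ in a Sasaki ball around $v_0$, obtained for instance from parallel transport along Sasaki geodesics emanating from $v_0$. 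In this trivialization the section $v\mapsto A(v)$ becomes a matrix-valued map $\tilde A$ which, by Proposition \ref{hoelder2}, is H\"older continuous with exponent and constant independent of $v_0$ (since $T^1M$ has uniformly bounded geometry from the curvature bounds on $M$ and the trivialization is universally controlled).

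Combining these two observations, $m(v)=\operatorname{tr}\tilde A(v)$ is H\"older continuous on every Sasaki ball of fixed radius, with uniform H\"older exponent $\alpha$ and uniform H\"older constant. Since $m$ is also bounded (the eigenvalues of $A(v)$ lie in a fixed interval by standard Riccati comparison using the curvature bounds $-b^2\le K\le -a^2$), this local H\"older estimate automatically upgrades to a global H\"older estimate on $T^1M$ for the same exponent $\alpha$ (large Sasaki distances are handled by boundedness of $m$).

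There is essentially no obstacle beyond verifying that the local trivialization estimates are truly uniform; the content of the corollary is entirely in Proposition \ref{hoelder2}, and the passage from the shape operator to its trace is a routine linear reduction. The only mild subtlety worth flagging in the writeup is that the H\"older constant of a section of a vector bundle is only meaningful after specifying how one identifies nearby fibers, but as long as one uses parallel transport with respect to a metric connection (or any universally bi-Lipschitz identification), the trace of the section transforms compatibly and the estimate survives.
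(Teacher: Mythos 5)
Your proposal is correct and follows exactly the paper's route: the paper derives the corollary as an immediate consequence of Proposition \ref{hoelder2} by observing that $m(v)=\operatorname{tr}A(v)$ and that taking the trace is a uniformly bounded fiberwise linear operation on the H\"older continuous section $v\mapsto A(v)$ of $E^*\otimes E$. The extra care you take with local trivializations and the global-from-local upgrade via boundedness of $m$ is sound but not needed beyond what the paper treats as immediate.
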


The strategy of proof consists in comparing the shape operator of 
horospheres with the shape operator of hypersurfaces depending
smoothly on the defining data. More precisely, 
for $R>0$ let $A_R(v)$ be the shape operator
at $x=\gamma_v(0)$ of the hypersurface
\[N_R(v)=\{y\mid d(y, \exp  (\gamma_v^\prime(-R)^\perp))=R\}\]
where $\exp$ denotes the exponential map of $M$, 
and define similarly $A^S_R(v)$ to be the shape operator
at $x=\gamma_v(0)$ of the distance sphere 
of radius $R$ about $\gamma_v(-R)$. 

In the statement of the following lemma, norms are taken with 
respect to the natural Riemannian metric on $E^*\otimes E$.
Only bounded negative curvature on $M$ is necessary for Lemma \ref{hoelderclose}
and Lemma \ref{controlq} 
to hold true, that is, no assumption on the covariant derivative
of the curvature tensor is required.

\begin{lemma}\label{hoelderclose}
There exist numbers $C_0>0,\alpha >0$ only depending on the 
curvature bounds such that 
\begin{equation}
\vert A(v)-A_R(v)\vert  \leq C_0e^{-\alpha R} \text{ and }
\vert A(v)-A^S_R(v) \vert  \leq C_0e^{-\alpha R} \notag\end{equation}
for all  $v\in T^1M$ and 
$R\geq 10$.
\end{lemma}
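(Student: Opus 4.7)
The plan is to exhibit all three shape operators as values at $t=0$ of solutions of the \emph{same} matrix Riccati equation along the geodesic $\gamma_v$ on $[-R, 0]$, and then to exploit the contraction property of the Riccati equation in pinched negative curvature. Along the parallel--translated normal bundle over $\gamma_v$, the shape operator $U(t)$ of any smooth family of parallel hypersurfaces with unit normal $\gamma_v'(t)$ satisfies the matrix Riccati equation
\[U'(t) + U(t)^2 + \mathcal{R}(t) = 0,\]
where $\mathcal{R}(t)W = R(W,\gamma_v'(t))\gamma_v'(t)$ is the Jacobi curvature endomorphism. By the standard characterisation of stable solutions in negative curvature (see \cite{Shc83}), $A(v)$ is the value at $t=0$ of the unique solution $U^\infty$ bounded on $(-\infty,0]$; $A_R(v)$ is the value at $t=0$ of the solution $U^N$ with initial condition $U^N(-R)=0$, because $\exp(\gamma_v'(-R)^\perp)$ has vanishing second fundamental form at its basepoint $\gamma_v(-R)$; and $A_R^S(v)$ is the value at $t=0$ of the solution $U^S$ that blows up like $(t+R)^{-1}\operatorname{Id}$ as $t\to -R^+$, this being the shape operator of small metric spheres around $\gamma_v(-R)$.

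The core Riccati contraction estimate is as follows: if $U_1,U_2$ are symmetric solutions on $[s,t]$ and $D=U_1-U_2$, then subtracting the two Riccati equations gives $D' = -(U_1 D + D U_2)$, hence
\[\tfrac{d}{d\tau}\|D(\tau)\|^2 = -2\operatorname{tr}(D U_1 D) - 2\operatorname{tr}(D U_2 D) \leq -4a\,\|D(\tau)\|^2\]
whenever $U_1(\tau), U_2(\tau) \geq a\operatorname{Id}$ on $[s,t]$. By Rauch comparison in curvature pinched in $[-b^2,-a^2]$, the stable solution satisfies $a\operatorname{Id} \leq U^\infty(\tau) \leq b\operatorname{Id}$ for all $\tau$, while $U^N$ and $U^S$ both enter the range $[a\operatorname{Id}, C\operatorname{Id}]$ on $[-R+1, 0]$ after a unit warm-up. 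At $\tau = -R+1$ the differences $\|U^\infty-U^N\|$ and $\|U^\infty-U^S\|$ are bounded by a universal constant $C'$, and applying the above inequality on the interval $[-R+1, 0]$ of length $R-1 \geq 9$ yields
\[\|A(v)-A_R(v)\|,\ \|A(v)-A_R^S(v)\| \leq C'e^{-2a(R-1)} \leq C_0\,e^{-\alpha R}\]
with $\alpha = 2a$ and $C_0 = C'e^{2a}$ depending only on the curvature bounds. Since the natural metric on the fiber of $E^*\otimes E$ at $v$ is induced from the operator norm on $\operatorname{End}(v^\perp)$ under the canonical isometry $E_v \cong v^\perp$, the estimate transfers verbatim to the stated norm.

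The main obstacle is the warm-up phase near $t=-R$, where $U^S$ is singular and $U^N$ begins at $0$, so the contraction inequality cannot be invoked from the outset. This is handled by standard Rauch--type bounds on the unit interval $[-R,-R+1]$: at $U^N(-R)=0$ one has $(U^N)'(-R) = -\mathcal{R}(-R) \geq a^2\operatorname{Id}$, while $U^S$ is pinched between the constant--curvature model spheres of curvature $-a^2$ and $-b^2$ and therefore satisfies $U^S(-R+1)\in[a\operatorname{Id}, C\operatorname{Id}]$. Crucially these Rauch--type bounds rely only on the pointwise sectional curvature bound, with no input on $\nabla R$; this is precisely why the lemma holds under the weaker hypothesis of bounded negative curvature alone, as advertised.
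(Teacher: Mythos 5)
Your proposal is correct, but it takes a genuinely different route from the paper. The paper works directly at the level of Jacobi fields: writing $A(v)X=\nabla J_X(0)$ and $A_R(v)X=\nabla J_X^R(0)$, it forms the difference Jacobi field $\hat J_X=J_X-J_X^R$, which vanishes at $t=0$, applies Rauch comparison to get $\Vert \hat J_X(-R)\Vert\geq \sinh(aR)\Vert\nabla\hat J_X(0)\Vert$, and then closes the loop by observing that $\nabla\hat J_X(-R)=\nabla J_X(-R)$ is exponentially small because $J_X$ is the stable Jacobi field; the sphere case is handled identically using $\hat J_X(-R)=J_X(-R)$. You instead pass to the Riccati equation for the shape operators and use the exponential contraction of the difference of two positive-definite solutions, $\frac{d}{d\tau}\Vert D\Vert^2\leq -2\operatorname{tr}(DU_1D)-2\operatorname{tr}(DU_2D)$, after a unit warm-up near $t=-R$. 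Both arguments are standard and use only the two-sided curvature pinching; your version has the advantage of treating the flat initial condition $U^N(-R)=0$ and the singular initial condition of $U^S$ completely uniformly once the warm-up bounds are in place, whereas the paper's Jacobi-field computation is slightly more economical and gives the explicit rate $e^{-2aR}$ directly. One small quantitative slip: after the unit warm-up the comparison gives $U^N(-R+\tau)\geq a\tanh(a\tau)\operatorname{Id}$, so $U^N$ never actually enters $[a\operatorname{Id},C\operatorname{Id}]$ as you assert, only $[a\tanh(a)\operatorname{Id},C\operatorname{Id}]$; this merely replaces your contraction rate $4a$ by $2a(1+\tanh a)$ and is harmless for the statement, which only requires some $\alpha>0$ depending on the curvature bounds.
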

\begin{proof} Let $x\in M,v\in T_x^1M$ and 
let $H$ be the horosphere passing though $x$ which is 
determined by the ideal boundary point $\gamma_v(-\infty)\in \partial M$. 
The shape operator of $H$ at $x$ can be computed 
as follows. Let $X\in T_xH=v^\perp$ be a tangent vector of $H$ at $x$.
Then $X$ determines uniquely a Jacobi field $J_X$ along $\gamma_v$
with $J_X(0)=X$ and $\lim_{t\to -\infty}\Vert J_X(t)\Vert =0$.  
The shape operator $A(v)$ then equals the linear endomorphism
$X\to \nabla J_X(0)$ 
of the euclidean vector space $T_xH=v^\perp$.
Here $\nabla J_X$ denotes the covariant derivative of $J_X$ along the 
geodesic $\gamma_v$.

Similarly, for $R\geq 0$ the shape operator
$A_R(v)$ is computed as the linear map 
$X\to \nabla J_X^R(0)$ where $J_X^R$ is the Jacobi field
along $\gamma_v$ with 
$J_X^R(0)=X\in v^\perp$ and $\nabla J_X^R(-R)=0$. It now suffices to show 
that 
\begin{equation}\label{shape1} 
\Vert \nabla J_X(0)- \nabla J_X^R(0)\Vert \leq 
Ce^{-\alpha R} \Vert X\Vert\end{equation}
for constants $\alpha >0, C>0$ only depending on the curvature bounds.

For each $X$ consider the Jacobi field 
$\hat J_X=J_X-J_X^R$ along $\gamma_v$. 
It vanishes at $t=0$.
The Rauch comparison theorem shows 
that 
\begin{equation}\label{rauch1}
\Vert \hat J_X (-R)\Vert \geq  \sinh aR\, \Vert 
\nabla \hat J_X(0)\Vert\end{equation}
(here as before, $-a^2$ is an upper curvature bound for $M$).
Furthermore, there exists a constant $C_0>0$ only depending on the 
curvature bounds such that
\begin{equation}\label{rauch3}
\Vert \nabla \hat J_X(-R)\Vert \geq C_0 \Vert \hat J_X(-R)\Vert \text{ for all }R.
\end{equation}

On the other hand, we have
$\nabla \hat J_X(-R)=\nabla J_X(-R)$. Hence using once more comparison,
we obtain 
\begin{equation}\label{rauch2}
\Vert \nabla \hat J_X(-R)\Vert \leq C_1e^{-a R}\Vert X\Vert \end{equation}
for a universal constant $C_1>0$.
For $R>10$ the estimates (\ref{rauch1}), (\ref{rauch3}) 
and (\ref{rauch2}) together yield that indeed, 
\[\Vert \nabla J_X(0)-\nabla J_X^R(0)\Vert \leq C_2e^{-a R}
\Vert \hat J_X(-R)\Vert \leq C_2C_0^{-1}C_1 e^{-2aR}\Vert X\Vert\]
for a universal constant $C_2>0$. 

This shows the first estimate stated in the lemma. The second estimate
follows from exactly the same argument, replacing the condition 
$\nabla \hat J_X(-R)=\nabla J_X(-R)$ by the condition 
$\hat J_X(-R)=J_X(-R)$. The lemma follows. 
\end{proof}

The principal bundle ${\cal P}\to M$ of orthonormal frames
in $TM$ is equipped with 
the Levi Civita connection which defines a decomposition 
$T{\cal P}={\cal H}\oplus {\cal V}$ where ${\cal V}$
is the tangent bundle of the 
fibers (note that this splitting is related to the splitting of 
$TT^1M$, but the fiber spaces are different. We nevertheless use the 
same notation here to keep the notations simple). 
This splitting determines a smooth
Riemannian metric on ${\cal P}$ with the 
following properties. 
The fibers are isometric to the orthogonal group with the bi-invariant metric
defined by the Killing form,   
the decomposition $T{\cal P}={\cal H}\oplus {\cal V}$ is orthogonal, and 
${\cal P}\to M$ is a Riemannian submersion. We denote by $d_{\cal P}$ the 
distance for this metric. 

Let $x\in M$ and let $v\not=w\in T_x^1M$ be
two unit tangent vectors based at $x$.
Denote by $\angle (v,w)$ the euclidean angle between $v,w$ and assume that 
$\angle(v,w)<\pi/4$. 
Choose an orthonormal basis 
$P(0,0)=(e_1,\dots,e_{n-1},v)$ of $TM$ at $x$ with the property
that $e_{n-1}$ is contained in the plane spanned by $v,w$. 
Let $s\to \chi(s)\in T_x^1M$ $
(s\in [0,\angle(v,w)])$ be the shortest geodesic connecting $v$ to $w$ 
(which is contained in the plane spanned by $v,w$) and 
define 
\[\zeta(s,t)=\gamma_{\chi(s)}(t).\]
Thus $\zeta$ is a variation of geodesics through the point $x$.

Let $Q(s,t)$ be the frame over $\zeta(s,t)$ obtained from 
$P(0,0)$ by parallel transport along $\zeta_s(t)=\zeta(s,t)$.
The following lemma is probably well known and 
included here for completeness. In its formulation,
norms of tangent vectors are taken with respect to the Riemannian
metrics on ${\cal P}$ and $M$.

\begin{lemma}\label{controlq}
  There exists a number $C_1>0$
  such that $\Vert \frac{\partial}{\partial s} Q(s,t)\Vert 
 \leq  C_1 \Vert \frac{\partial}{\partial s}\zeta(s,t)\Vert$
 for all $s,t$.
\end{lemma}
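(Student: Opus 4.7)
The plan is to decompose $\frac{\partial Q}{\partial s}$ with respect to the splitting $T\mathcal{P}=\mathcal{H}\oplus\mathcal{V}$ and bound each component separately. Since $\Pi:\mathcal{P}\to M$ is a Riemannian submersion, the horizontal component of $\partial_s Q(s,t)$ has norm exactly $\|d\Pi(\partial_s Q)\|=\|\partial_s\zeta(s,t)\|$, so it suffices to control the vertical component. The vertical part at a frame $Q$ is identified with an element of $\mathfrak{o}(n)$ via the skew-symmetric matrix $\omega_{ij}(s,t)=\langle\nabla_s Q_i(s,t),\,Q_j(s,t)\rangle$, and its bi-invariant norm is bounded by a dimensional constant times $\max_i\|\nabla_s Q_i(s,t)\|$. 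So the problem reduces to estimating $\|\nabla_s Q_i\|$ in terms of $\|J\|$, where $J:=\partial_s\zeta$.

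Next I derive a transport equation for $\nabla_s Q_i$ along the geodesic $\zeta_s$. Since each column $Q_i$ is parallel along $\zeta_s$, we have $\nabla_t Q_i=0$; the commutation identity $\nabla_s\nabla_t-\nabla_t\nabla_s=R(\partial_s\zeta,\partial_t\zeta)$ then gives
\[
\nabla_t(\nabla_s Q_i)=R(\gamma'(s,t),\,J(s,t))\,Q_i(s,t),
\]
with initial condition $\nabla_s Q_i(s,0)=0$, since $Q(s,0)=P(0,0)$ does not depend on $s$. Using the pointwise bound $\|R(X,Y)Z\|\le K\|X\|\|Y\|\|Z\|$ (with $K$ depending only on $b$) and integrating from $0$ to $t$ yields
\[
\|\nabla_s Q_i(s,t)\|\le K\int_0^t\|J(s,\tau)\|\,d\tau.
\]

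The remaining step, which I expect to be the main obstacle, is bounding this integral by a universal multiple of $\|J(s,t)\|$. First observe that $J(s,\cdot)$ is a Jacobi field along $\zeta_s$ with $J(s,0)=0$ and $\nabla_t J(s,0)=\chi'(s)$; since $\chi'(s)\perp\chi(s)$ and the inner product with $\gamma'$ is conserved along a geodesic, $J\perp\gamma'$ throughout. The upper sectional curvature bound $\le-a^2$ then gives $\|J\|''\ge a^2\|J\|$ as a function of $\tau$. A Riccati argument for $r=\|J\|'/\|J\|$, whose blowup $r(0^+)=+\infty$ combined with the differential inequality $r'\ge a^2-r^2$ forces $r\ge a$ on $(0,t]$, shows that $\tau\mapsto\|J(s,\tau)\|e^{-a\tau}$ is non-decreasing. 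Hence $\|J(s,\tau)\|\le e^{-a(t-\tau)}\|J(s,t)\|$ for $\tau\le t$, so that
\[
\int_0^t\|J(s,\tau)\|\,d\tau\le\|J(s,t)\|\int_0^t e^{-a(t-\tau)}\,d\tau\le\|J(s,t)\|/a,
\]
which combined with the horizontal estimate yields $\|\partial_s Q(s,t)\|\le C_1\|\partial_s\zeta(s,t)\|$ for a constant $C_1$ depending only on $a,b,n$. The subtlety is that the naive comparison $\|J(s,\tau)\|\le\sinh(b\tau)/b$ combined with $\|J(s,t)\|\ge\sinh(at)/a$ would yield a ratio of order $e^{(b-a)t}$, which is unbounded; the Riccati inequality is essential because it gives the pointwise lower bound $\|J\|'\ge a\|J\|$ at every time, not merely a global comparison with a fixed model function. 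Note that the argument uses only pointwise curvature bounds, consistent with the fact that no control on $\nabla R$ is needed for this lemma.
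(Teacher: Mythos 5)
Your proof is correct, and its overall architecture matches the paper's: both split $\partial_s Q$ into horizontal and vertical components with respect to the connection on $\mathcal{P}$, dispose of the horizontal part by noting that $\mathcal{P}\to M$ is a Riemannian submersion, and then show that the vertical part (the infinitesimal holonomy) is controlled by curvature times transverse displacement. Where you genuinely diverge is in how that vertical estimate is carried out. The paper stays in the frame bundle: it integrates the connection $1$-form $\omega$ around a thin loop built from two of the geodesics $\zeta_s$ and a short transversal, converts this via Stokes's theorem into an integral of the curvature form over the enclosed sector, and bounds that by the norm of $R$ times the area of the sector, the area bound $\leq C_0 h\Vert\zeta_t'(s)\Vert$ being asserted ``by comparison.'' You instead differentiate the parallel-transport equation, obtaining the linear ODE $\nabla_t(\nabla_s Q_i)=R(\partial_t\zeta,J)Q_i$ with vanishing initial data at $t=0$ (valid because $Q(s,0)=P(0,0)$ for all $s$), and integrate along a single geodesic; this is the infinitesimal version of the same holonomy-equals-curvature-times-area principle and avoids the bundle-theoretic Stokes argument entirely. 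In both versions the real analytic content is the inequality $\int_0^t\Vert J(\tau)\Vert\,d\tau\leq C\Vert J(t)\Vert$ for a Jacobi field vanishing at $0$: the paper compresses this into the phrase ``by comparison'' when bounding the area of the sector, whereas you prove it via the Riccati inequality $r'\geq a^2-r^2$ for $r=\Vert J\Vert'/\Vert J\Vert$, and you correctly flag that the naive two-sided comparison with $\sinh(a\tau)$ and $\sinh(b\tau)$ would only give a bound of order $e^{(b-a)t}$ and hence does not suffice. Making that step explicit is a gain in completeness over the paper's write-up; the constant you obtain depends only on $a$, $b$ and $n$, exactly as required.
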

\begin{proof}
The map $(s,t)\to Q(s,t)$ is a variation of horizontal geodesics in ${\cal P}$
with the same starting point $Q(0,0)$. 
For each $t$, the path $Q_t:s\to Q(s,t)$ is 
a lift to ${\cal P}$ 
of the path $\zeta_t:s\to \zeta(s,t)$ in $M$. It is 
smooth but may not be horizontal. Its tangent can be 
decomposed as 
\[Q_t^\prime(s)= Q_{t,{\cal H}}^{\prime}(s)+
Q_{t,{\cal V}}^\prime(s)\] 
into a horizontal and vertical component. Since 
$\Vert Q_{t,{\cal H}}^\prime(s)\Vert= \Vert \zeta_t^\prime(s)\Vert$ 
we have to show the existence of a number $C>0$ such that
\begin{equation}\label{verticalder}
\Vert Q^\prime_{t,{\cal V}}(s)\Vert \leq C\Vert \zeta_t^\prime(s)\Vert\end{equation}
for all $s,t$.  That this holds indeed true 
can be seen as follows.

Let $\omega$ be the connection $1$-form on ${\cal P}$. This is a one-form
on ${\cal P}$ with values in the Lie algebra
${\mathfrak so}(n)$ of the structure group $O(n)$ of ${\cal P}$ which vanishes
on the horizontal bundle ${\cal H}$.
The ${\mathfrak so}(n)$-valued curvature form 
$\Omega=d\omega +\frac{1}{2}[\omega,\omega]$
is horizontal, that is, it is annihilated by ${\cal V}$.
If $W\to M$ denotes the vector bundle 
of antisymmetric linear endomorphisms of $TM$ 
then $\Omega$ 
descends to the $W$-valued $2$-form on $M$ defined by the
Riemannian curvature tensor.

Let $s\geq 0$ be fixed. For small $h>0$ 
consider the piecewise smooth 
loop $\zeta_{s,h,t}$ in $M$ based at $\zeta(s,t)$ 
which is the concatentation of the 
subsegment $u\to \zeta(s+u,t)$ 
of $\zeta_t$ connecting $\zeta_t(s)$ to 
$\zeta_t(s+h)$, the geodesic
arc $u\to \zeta(s+h,t-u+h)$ and the geodesic arc 
$u\to \zeta(s,u-t-h)$.  
Let $Q_{s,h,t}$ be the lift of $\zeta_{s,h,t}$ to the (bordered) surface 
\[Q=\{Q(s,t)\mid 0\leq s\leq \angle(v,w),0\leq t\}.\] This is a
piecewise smooth closed curve in ${\cal P}$ which bounds a subsurface of $Q$.

Since the connection form $\omega$ vanishes on ${\cal H}$ and the
arcs $t\to Q(s,t)$ are horizontal geodesics, 
the integral of 
$\omega$ over this piecewise smooth loop equals the element 
$a(s,h)=\int_s^{s+h} \omega(Q^\prime_{t,{\cal V}}(u))du\in
{\mathfrak so}(n)$. Since $Q$ is a smooth map,
for sufficiently small $h$ we have 
\begin{equation}\label{q}
h\Vert Q^\prime_{t,{\cal V}}(s)\Vert \leq 2\Vert a(s,h)\Vert\end{equation}
(here the norm is taken in the Lie algebra
${\mathfrak so}(n)$).

By comparison, for small enough $h$ 
the area of the sector
$(u,z)\to \zeta(u,z)$ $(s\leq u\leq s+h, 0\leq z\leq t)$ with respect
to the pull-back of the metric on $M$ is
bounded from above by $C_0h\Vert \zeta_t^\prime (s)\Vert $ 
where $C_0>0$ is a constant only depending on the curvature bounds.
Since the 
curvature tensor, viewed as a symmetric bundle map $W\to W$, 
is pointwise uniformly bounded in norm, the integral of the norm
of the curvature tensor 
over this surface is bounded from above by 
$C_1 h\Vert \zeta_t^\prime(s)\Vert$ for a universal constant $C_1>0$.

On the other hand, 
$\omega$ vanishes on ${\cal H}$ and the curves
$t\to Q(s,t)\in {\cal P}$ are horizontal geodesics. Thus we have
$\Omega\vert Q=d\omega \vert Q$. Using 
Stokes's theorem, this implies that
$\Vert a(s,h)\Vert \leq C_2 h\Vert \zeta_t^\prime(s)\Vert$ 
where $C_2>0$ is a universal constant.
Using the inequality (\ref{q}) and taking the limit as 
$h\searrow 0$ 
yields the estimate (\ref{verticalder}). From this the 
lemma follows. 
\end{proof}

Let $P(s,0)=(e_1,\dots,e_{n-2}, e_{n-1}(s),\chi(s))$ be the frame 
obtained by rotating the plane
spanned by $v,w$ keeping the orthogonal complement pointwise fixed.
The frame $P(s,0)$ extends by parallel transport along the geodesics 
$\zeta_s:t\to \zeta(s,t)$ to a section 
$(s,t)\to P(s,t)$ of ${\cal P}$ over $\zeta$.
The map 
$(s,t)\to P(s,t)\in {\cal P}$ is a smooth embedding. Each of the curves
$t\to P(s,t)$ is a horizontal geodesic. 
Furthermore, by the definition of the 
Sasaki metric, we have 
\[d_{\cal P}(Q(s,t),P(s,t))=s\text{  for all }s,t\] 
and
hence Lemma \ref{controlq} shows that 
\begin{equation}\label{pestimate}
\Vert \frac{\partial}{\partial s}P(s,t)\Vert \leq 1+C_1\Vert 
\frac{\partial}{\partial s}\zeta(s,t)\Vert. 
\end{equation}

We are now ready to complete the  proof of Proposition \ref{hoelder2}. 
We now need to assume that the curvature of $M$ is bounded between two 
negative constants and that the 
covariant derivative $\nabla R$ of the 
curvature tensor is uniformly bounded in norm.

\begin{proof}[Proof of Proposition \ref{hoelder2}] By the assumption on $M$, 
for each $\xi\in \partial M$ the gradient field ${\rm grad}\,b_\xi$ of a 
Busemann function $b_\xi$ 
at $\xi$ is a section of $T^1M$ of class $C^2$, with uniformly bounded 
first and second covariant derivatives \cite{Shc83}. 
For $x\in M$ and 
$v={\rm grad}\, b_\xi (x)$, the shape operator
$A(v)$ at $x$ 
of the horosphere $b_\xi^{-1}(b_\xi(x))$ 
equals the linear map 
\[X\in v^\perp  \to \nabla_X {\rm grad}\, b_\xi\] 
where in all computations, we normalize shape operators to 
be positive semi-definite. 
Thus the restriction of the section $A$ of $E^*\otimes E$ to the image    
of the section ${\rm grad}\,b_\xi$ 
is of class $C^1$, 
with pointwise uniformly bounded differential with respect to 
the Sasaki metric, and hence it is H\"older continuous. 

For $v\in {\rm grad} \,b_\xi$, the tangent space of $T^1M$ at $v$ is a direct
sum of the vertical tangent space ${\cal V}_v$, that is, the tangent
space of the fibers of the fibration $T^1M\to M$, and the tangent space
$T_v{\rm grad}\, b_\xi$ 
of the $C^2$-submanifold ${\rm grad}\, b_\xi$. Due to the fact that the 
eigenvalues of the shape operators of horospheres are bounded from above
and below by universal positive constants, 
this decomposition is well adapted to the Sasaki metric. By this we mean
that the angle between a vector of ${\cal V}_v$ and a vector of 
$T_v {\rm grad}\,b_\xi$ is bounded from below by a universal positive constant. 
Moreover, any two points $v,w\in T^1M$ can be connected by a piecewise smooth 
path which consists of finitely many segments alternating between 
segments in submanifolds ${\rm grad}\,b_\xi$ for some $\xi\in \partial M$ and
segments in fibers of the fibration $T^1M\to M$ and whose length is bounded from 
above by a universal constant times the distance in $T^1M$ between $v,w$. 
It therefore suffices 
to show the existence of numbers $C>0,\kappa \in (0,1)$ with the
following property. Let $x\in M$ and let $v,w\in T_x^1M$; then 
\[ \vert A(v)-A(w)\vert \leq C \angle (v,w)^\kappa\]
where as before, $\angle (v,w)$ is the Euclidean angle between the unit vectors $v,w$
and the norm is taken as the norm of a symmetric linear
endomorphism of the Euclidean vector space $T_xM$.
For ease of notations, we shall show $\vert A(-v)-A(-w)\vert 
\leq C\angle(v,w)^\kappa$.

To see that this indeed holds true let $v\not=w\in T_x^1M$ 
and assume without loss of generality that $\angle (v,w)<1/2$
and hence $\log \angle (v,w) <0$. We use now the constructions
and notations from the beginning of this appendix and consider 
the minimal geodesic 
$\chi:[0,\angle (v,w)]\to T_x^1M$ 
which connects $v$ to $w$ and the corresponding variation of 
geodesics $\zeta(s,t)=\gamma_{\chi(s)}(t)$
with variation Jacobi fields $J_s$, determined by the 
initial condition $J_s(0)=0$
and $\nabla J_s(0)=\frac{d}{ds}\chi(s)$. 
Since $\Vert \nabla J_s(0)\Vert =1$ for all $s$,
standard Jacobi 
field estimates show that 
\begin{equation}\label{comparejac}\Vert J_s(-R)\Vert \in [\sinh{aR},\sinh{bR}].
\end{equation}

Let $R>0$ be such that $\int_0^{\angle (v,w)}\Vert \zeta_R^\prime(s)\Vert ds=1$.
By the estimate (\ref{comparejac}) we have
\[\angle (v,w) \sinh aR  \leq 1 \leq \angle(v,w) \sinh bR\]
and consequently 
\begin{equation}\label{star}
R\in [-\log \angle(v,w)/b, -\log \angle (v,w)/a+C_2].\end{equation}
for a universal constant $C_2>0$.

%On the other hand, using uniform boundedness 
%of the shape operator for distance spheres
%of sufficiently large radius, there exists a constant 
%$q>1$ such that whenever 
%$d(\gamma_v(-t),\gamma_w(-t))\leq 1$ then the length of the arc
%$s\to \gamma_{\chi(s)}(-t)$ is at most $q$.
%Together we obtain
%which yields 
%the estimate (\ref{star}). 

Let $r=aR/10 b$. Using the estimate (\ref{comparejac}), 
we have 
\[d(\gamma_v(r),\gamma_w(r))\leq e^{br} \angle (v,w).\]
But $e^{br}\leq e^{a R/10} \leq e^{C_2/10}\angle(v,w)^{-1/10}$
by the estimate (\ref{star}) and therefore 
\begin{equation}\label{restimate}
e^{br}\angle (v,w)\leq e^{C_2/10}\angle (v,w)^{1-1/10}.\end{equation}
By hyperbolicity and the estimate (\ref{pestimate}), 
we also have
\begin{equation}\label{distance5}
d_{\cal P}(P(0,t),P(\angle(v,w),t)) 
\leq
C_3\angle (v,w)^{9/10}\end{equation}
 for all $t\in [0,r]$ and a universal
constant $C_3>0$, where $P(s,t)\in {\cal P}$ is as in the 
construction preceding this proof.

Using the trivialization of $TM\vert \zeta(s,t)$ defined by the 
frames $P(s,t)$, the Jacobi equation 
translates into the Riccati equation 
\begin{equation}\label{riccati}
A_s^\prime(t)+A_s^2(t)+R_s(t)=0\end{equation}
for the shape operators $A_s(t)$ of the 
hypersurfaces of distance $t$ to $\exp (\zeta_s^\prime(r)^\perp)$.
Here $R_s(t)Y=R(Y,\zeta_s^\prime(r-t))\zeta_s^\prime(r-t)$
and the equation is thought of as an ODE for symmetric $(n-1,n-1)$-matrices written with 
respect to the parallel orthonormal trivialization $t\to P(s,t)$ of
$TM\vert \zeta_s$. The solution we are looking for is 
determined by the initial condition $A_s(0)=0$. Note that for these
expressions, we invert the time of the geodesics $t\to \zeta(s,t)$ and use
a time shift so that $t=0$ corresponds to $\zeta(s,r)$ for all $s$.

The symmetric linear operators $R_s(t)$ 
of the euclidean vector spaces $\zeta_s^\prime(r-t)^\perp$ 
are uniformly bounded and uniformly 
negative definite. Since the 
covariant derivative $\nabla R$ is pointwise
uniformly bounded, it follows from the 
estimate (\ref{distance5}) 
that there exists a number $C_4>0$ not
depending on $(v,w)$ such that 
\[R_0(t)(1+C_4\angle(v,w)^{9/10}) \leq 
R_{\angle(v,w)}(t)\]
for all $0\leq t\leq r$. This means that $R_{\angle(v,w)}(t)-R_0(t)(1-C_4\angle (v,w)^{9/10})$
is non-negative definite. 

Namely, by the inequality (\ref{distance5}), in the frame bundle ${\cal P}$,
the length of the lift to ${\cal P}$ of the path $s\to \zeta_s(t)$ is bounded from 
above by $C_3\angle(v,w)^{9/10}$. The curvature form on ${\cal P}$ is a 
smooth 2-form on ${\cal P}$ with values in the Lie algebra $\mathfrak{so}(n)$ of the 
fiber group $SO(n)$. The natural Riemannian metric on ${\cal P}$ 
induces a metric on the bundle of $\mathfrak{so}(n)$-valued $2$-forms on ${\cal P}$.
Since the covariant derivative of the curvature tensor of $M$ is pointwise uniformly
bounded, the same holds true for the covariant derivative of this
$\mathfrak{so}(n)$-valued $2$-form. As a consequence, given the value of the form
at the frame $P(0,t)$, the value at the frame $P(s,t)$ differs in norm from the value 
at the frame obtained by parallel transport along the lift of the path 
$s\to \zeta(s,t)$ by a uniform multiple of the length of the path, that is, by at most
$C\angle(v,w)^{9/10}$ for a fixed number $C>0$.
Viewing the curvature as a symmetric endomorphism of the 
bundle of two-forms on $M$, if the restriction of such an endomorphism $\Lambda$
to a codimension one subspace $V$ is negative definite, with largest eigenvalue 
bounded from above by a fixed constant $-a<0$, then for any symmetric endomorphism
$T$ such that the operator norm of $\Lambda-T$ is sufficiently small,  
the restriction of $\Lambda(1+C\Vert \Lambda -T\Vert)- T$ 
to $V$ is nonpositive definite provided that $C>0$ is sufficiently large. 

As a consequence, the
symmetric matrix 
\[
(1+C_4\angle(v,w)^{9/10})R_{0}(t)-R_{\angle(v,w)}(t)\] 
is nonpositive definite for all
$0\leq t\leq r$, where we use the frames $P(s,t)$ to identify
the vector spaces $\zeta_s^\prime(t)^\perp$ with a single
euclidean vector space of dimension $n-1$.

%Comparison of solutions of the Riccati equation
%(the main theorem in \cite{EH90})
%then shows the following. If we denote by
%$\hat A_0$ the solution of the equation
%\[\hat A_0^\prime(t)+\hat A_0^2(t)+R_0(t)(1+C\angle(v,w)^{9/10})=0\]
%with the initial condition $\hat A_0(r)=0$ (and solved backwards, that is,
%reversing the time) 
%then $\hat A_0(t)\geq A_{\angle(v,w)}(t)$ for all $t\leq r$. 
%In particular,
%the solution $\hat A(t)$ exists on $[0,r]$. 

Denote by ${\cal J}(t)$ the matrix in the parallel 
frame $P(0,t)$ defining the Jacobi fields $J_X$ along 
the geodesic $t \to \gamma_v(r-t)$ with initial condition 
$J_X(0)=X,\nabla J_X(0)=0$ where $X\in \gamma_v^\prime(r)^\perp$.
The matrix valued curve $t\to {\cal J}(t)$ consists of invertible matrices
starting at the identity 
and hence it can be written as $t \to \exp (V(t))$ where $\exp$ is the 
exponential map of the Lie group $GL(n-1,\mathbb{R})$ (for right invariant
vector fields) and where
$V(t)\in \mathfrak{gl}(n-1,\mathbb{R})$.

Put $q=1+C_4\angle(v,w)^{9/10}$ and $Q(t)=\exp (qV(t))$. 
Then $Q^\prime(t)=qV^\prime(t) Q(t)$
and consequently 
\begin{equation}\label{shape}
B(t)=Q^\prime(t)Q(t)^{-1}= q {\cal J}^\prime(t){\cal J}(t)^{-1}.\end{equation}
Since ${\cal J}^\prime(t) {\cal J}(t)^{-1}=A_0(t)$, 
we have
\[B^\prime(t)= q A_0^\prime(t)=Q^{\prime\prime}(t)Q^{-1}(t)-
(Q^\prime(t)Q(t)^{-1})^2=Q^{\prime\prime}(t)Q^{-1}(t)-
q^2 A_0(t)^2\]
and hence
\begin{equation}\label{positivelower}
B^\prime(t)+B(t)^2= qA_0^\prime(t)+q^2A_0^2(t)\geq 
q(A_0^\prime(t)+A_0^2(t))=-qR_0(t).\end{equation}
Here the inequality in the expression (\ref{positivelower}) 
stems from the fact that the matrix $A_0(t)$ is  
positive semi-definite for all $t$.

Since $qR_0(t)\leq  R_{\angle (v,w)}(t)$ for all $t$, we conclude from comparison
of solutions of 
the Riccati equation (the main theorem of 
\cite{EH90}) with the same initial condition that 
\[B(t)\geq A_{\angle(v,w)}(t)\text{ for } 0\leq t\leq r.\] 

The equation (\ref{shape}) shows that
$B(t)=qA_0(t)$ for all $t$ and hence
\[A_{\angle(v,w)}(r)\leq q A_0(r).\]
As $A_{\angle(v,w)}(r)=A_r(w)$ and $A_0(r)=A_r(v)$ 
 (via the change of coordinates defined 
by the frames $P(0,0)$ and $P(\angle(v,w),0)$)
and as for $r>10$ the eigenvalues of the shape operators $A_r(u)$ are bounded from 
above and below by a universal positive constant, 
exchanging the roles of $v,w$ yields that 
\[\vert A_r(v)-A_r(w)\vert \leq C_5\angle(v,w)^{9/10}\]
for a universal constant $C_5>0$.

 On the other hand, Lemma \ref{hoelderclose} shows that 
 $\vert A(v)-A_r(v)\vert \leq e^{-\alpha r}$ provided that $r>10$. 
 Since $r\geq - a \log \angle(v,w)/10b^2$,  together we obtain
 \begin{align} 
 \vert A(v)-A(w)\vert & \leq \vert A(v)-A_r(v)\vert +
 \vert A_r(v)-A_r(w)\vert +\vert A_r(w)-A(w)\vert \notag\\
 & \leq C_6\angle (v,w)^{\chi} \notag \end{align}
for universal constants $C_6>0,\chi>0$ 
which is what we wanted to show.
 \end{proof}

\begin{remark}\label{shape2}
By Proposition \ref{hoelder2}, 
for any $x\in M$ the shape operator of the horosphere defined
by $v\in T_x^1M$, viewed as symmetric linear operator 
on $v^\perp$, depends in a H\"older continuous fashion 
on $v$. This is equivalent to stating that  the subbundle $TW^{su}$ of 
$TT^1M$ whose fiber at $v\in T^1M$ equals the 
tangent space of the submanifold ${\rm grad}\,b_\xi$ 
$(\xi=\gamma_v(-\infty))$ is H\"older continuous. We refer to 
the explicit description of $TW^{su}$ 
in the proof of Lemma \ref{symmetry} which immediately yields this
equivalence. 
In the case that $M$
is a surface, this is the result established in \cite{Ho40}.

For H\"older continuity of $TW^{su}$  to hold true, 
the assumption that $\nabla R$ is uniformly bounded can not be omitted. 
Namely, it was shown in \cite{BBB87} that 
for every $\alpha >0$ and every $\epsilon >0$ 
there exists a surface $S_0$ with a smooth metric of finite
volume and curvature in $[-1-\epsilon,-1+\epsilon]$ 
and with the property that the subbundle $TW^{su}$ of 
$TT^1S_0$ 
is not H\"older continuous with exponent $\alpha$  
\cite{BBB87}. \end{remark}

\bigskip\bigskip

\bigskip\bigskip\noindent
Mathematisches Institut der Universit\"at Bonn\hfil\break
Endenicher Allee 60\hfil\break
53115 Bonn, Germany

\smallskip\noindent
e-mail: ursula@math.uni-bonn.de\hfil\break
\smallskip\noindent

\smallskip\noindent

\end{document}